\theoremstyle{plain}
\numberwithin{equation}{section}
\newtheorem{theorem*}{Theorem}
\newtheorem{definition*}{Definition}
\newtheorem{lemma}{Lemma}
\newtheorem{remark}{Remark}
\newtheorem{assumption}{Assumption}
\newtheorem{proposition*}{Proposition}
\newtheorem{exam}{Example}
\newcommand{\m}[1]{\ensuremath{\mathbf{#1}}}
\newcommand{\ds}{\displaystyle}
\def\rt{\right} \def\lt{\left}
\def\ra{\rangle} \def\la{\langle} 
  \def\to{\rightarrow} \def\Om{\Omega} 
\newcommand{\q}{\quad}
\begin{document}
\title{ Least-Squares Method for Inverse Medium Problems}
\author{Kazufumi Ito\footnote{ Department of Mathematics and Center for Research in Scientific Computation, North Carolina State University, Raleigh, NC 27695. (kito@ncsu.edu).}
\and Ying Liang\footnote{Department of Mathematics, Purdue University, West Lafayette, IN 47907. (liang402@purdue.edu).}
 \and Jun Zou\footnote{Department of Mathematics, The Chinese University of Hong Kong, Shatin, N.T., Hong Kong. The work of this author was substantially supported by Hong Kong RGC General Research Fund (projects 14306719 and 14306718). (zou@math.cuhk.edu.hk).}}
\date{}
\maketitle
\begin{abstract}
We present a two-stage least-squares method to inverse medium problems of reconstructing multiple unknown coefficients simultaneously from noisy data. A direct sampling method is applied to detect the location of the inhomogeneity in the first stage, while a total least-squares method with mixed regularization is used to recover the medium profile in the second stage. The total least-squares method is designed to minimize the residual of the model equation and the data fitting, along with an appropriate regularization, in an attempt to significantly improve the accuracy of the approximation obtained from the first stage. 
We shall also present an analysis on the well-posedness and convergence of this algorithm.
Numerical experiments are carried out to verify the accuracies and robustness of 
this novel two-stage least-squares algorithm, with great tolerance of noise.
\end{abstract}

\noindent {\footnotesize {\bf Keywords}:
Inverse medium problem, least-squares method, reconstruction algorithm, mixed regularization}

\noindent {\footnotesize {\bf Mathematics Subject Classification (MSC2000)}: { 35R30, 65F22, 65N21, 65N06}}

\begin{section}{Introduction}\label{intro}

In this work, we propose a total least-squares formulation for recovering multiple medium coefficients for a class of inverse medium problems that are governed by the forward model of the form: 
\begin{equation}
L(u,q)=g\label{gov}
\end{equation}
where $L$ is a bilinear operator on $(u, q)$,
 $u\in X$ is the state variable, while $q\in Z$ represents one or multiple unknown coefficients in the model 
that are to be recovered, under some measurement data of $f:=Cu\in Y$. Here $X$, $Y$ and $Z$ are three Hilbert spaces, 
and $C$ is an observation map from $X$ to $Y$. 


In many applications, it is often required to recover multiple coefficients simultaneously. For instance, 
the diffusive optical tomography (DOT) aims at recovering the diffusion and absorption coefficients $\sigma$ and $\mu$ 
from the governing equation \cite{arridge1999optical, gibson2005recent}:
\begin{equation} \label{dot}
-\nabla\cdot (\sigma(x) \nabla u)+\mu(x)\,u=0\, \q \mbox{in} ~~\Om
\end{equation}
using the Cauchy data $(f, h)$ collected at the boundary $\Gamma$ of $\Omega$:
\begin{equation} \label{bdr}
u|_\Gamma=f,\quad\frac{\partial u}{\partial \nu}|_\Gamma=h\,.
\end{equation}
Another example would be the inverse electromagnetic medium problem 
to recover the unknown magnetic and electric coefficients $\mu$ and $\lambda$ in the Maxwell's system \cite{dorn1999nonlinear, bao2005inverse}:
$$\begin{array}{l}
\nabla \times \vec{H}+i\omega \mu(x)\,\vec{E}=0\, \q \mbox{in} ~~\Om\,,
\\ 
\nabla \times \vec{E}-i\omega \lambda(x)\,\vec{H}=0 \, \q \mbox{in} ~~\Om\,, 
\end{array}$$
using some electric or magnetic measurement data $\vec{E}$ or $\vec{H}$.

The inverse reconstruction of multiple medium coefficients is generally much more technical and difficult than the single coefficient case. We shall proposed a total least-squares formulation with appropriate regularization 
to transform the inverse problem into an optimization problem.
The total least-squares philosophy is not uncommon. 
One conventional approach for inverse medium problems is to seek an optimal parameter $q$ from a feasible 
set $K\subset Z$ such that it minimizes an output least-squares functional $j$ of the form 
\begin{equation} \label{standard}
j(q) = \Vert Cu(q)-f\Vert_{Y}^2+\alpha\,\psi(q),
\end{equation}
where $u(q)$ solves the forward model \eqref{gov} when $q$ is given, 
$\psi$ is a regularization term and $\alpha>0$ is a regularization parameter. 
We refer the readers to \cite{gibson2005recent,cheney1999electrical,ito2015inverse} for more details about this traditional approach.
A relaxed variational method of the least-squares form was proposed and studied 
in \cite{kohn1987relaxation, wexler1985impedance} for the impedance computed tomography.
The least-squares functional consists of a residual term associated with the governing equation while the measurement data is enforced at the feasible set. Different from the aforementioned approaches, we shall follow some basic principle of a total least-squares approach from \cite{chung2021least} 
and treat the governing equation and the data fitting separately, along with a regularization. That is, 
we look for an optimal parameter $q$ from $Z$ and a state variable $u$ from $X$ together such that they minimize an 
extended functional of the form 
\begin{equation} \label{inv0}
J(u,q)=\Vert L(u,q)- g\Vert_{X}^2+\Vert Cu-f\Vert_{Y}^2+\alpha\,\psi(q)\,.  
\end{equation}

This functional combines the residual of model equation,
data fitting and constraints on parameters in the least-squares criterion. The combination in \eqref{inv0} results in a regularization effect to treat the model equation and allows a robustness as a quasi-reversibility method \cite{lattes1969method}.
Compared with the conventional approaches, the domain of $J(u,q)$ is much more regular and the semi-norm defined by the formulation is much stronger. More precisely, the total least-squares formulation aims to find a solution pair $(u,q)$ simultaneously in a smooth class and is less sensitive to the noise and uncertainty in the inverse model.
Another important feature of this formulation is that the functional $J(u,q)$ is quadratical and convex with respect to each variable 
$u$ and $q$, if the regularization $\psi$ is chosen to be quadratical and convex, while the traditional one $j(q)$ in \eqref{standard} is highly nonlinear and non-convex in general. 
This special feature facilitates us naturally to minimize the functional $J(u,q)$ effectively by the alternating direction iterative (ADI) 
method \cite{CsiszarAM1984, byrne2014iterative} so that only two quadratical and convex suboptimizations are required in terms of the variables $u$ and $q$ respectively at each iteration.


In addition to the functional \eqref{inv0} that uses the residual of the forward model \eqref{gov}, we will also 
address another least-squares functional that makes use of the equivalent first-order system of the forward model \eqref{gov} and replaces the first term in \eqref{inv0} by the residuals of the corresponding first-order system. 
Using the first-order system has been the fundamental idea in modern least-squares methods in solving second-order PDEs \cite{jespersen1977least, bochev1998finite, pehlivanov1994least}.
The advantages of using first-order formulations are much more significant to the numerical solutions of inverse problems, 
especially when we aim at simultaneously reconstructing multiple coefficients as we do in this work. 
First, the multiple coefficients appear in separated first-order equations, hence are naturally decoupled. This would 
greatly reduce the nonlinearity and enhance the convexity of the resulting optimization systems. 
Second, the first-order formulation relaxes the regularity requirement of the solutions in the resulting analysis.

A crucial step to an effective reconstruction of multiple coefficients is to seek some reasonable initial approximations 
to capture some general (possibly rather inaccurate) geometric and physical profiles of all the unknown multiple coefficients. 
This is a rather technical and difficult task in numerical solutions. 
For this purpose, we shall propose to adopt the direct sampling-type method (DSM) that we have been developing 
in recent years (cf. \cite{ito2012two, chow2014direct, chow2015direct, chow2018time}). 
Using the index functions provided by DSM, we shall determine a computational domain that is often much smaller 
than the original physical domain, then the restricted index functions on the computational domain serve 
as the initial guesses of the unknown coefficients. 
In this work, we will apply a newly developed DSM \cite{chow2020direct}, where two groups of probing and index functions are constructed to identify and decouple the multiple inhomogeneous inclusions of different physical nature, 
which is different from the classical DSMs targeting the inhomogeneous inclusions of one single physical nature. 
As we shall see, DSMs turn out to be very effective and fast solvers to provide some reasonable initial approximations.

The rest of the paper is structured as follows. In Section~\ref{sec:well}, we justify the well-posedness of the least-squares formulation for the general inverse medium problems. In Section~\ref{sec:ADI}, we propose an alternating direction iterative method for solving the minimization problem and prove the convergence of the ADI method. 
We illustrate in Section~\ref{sec:DOT} how this total least-squares method applies to a concrete inverse problem, 
by taking the DOT problem as a benchmark problem. We present numerical results in Section~\ref{sec:Num} for a couple of 
different types of inhomogeneous coefficients for the DOT problem to demonstrate the stability and effectiveness 
of this proposed method. Throughout the paper, $c$, $c_0$ and $c_1$ denote generic constants which may differ at each occurrence.
\end{section}

\begin{section}{Well-posedness of the least-squares formulation for inverse medium problem}\label{sec:well}
Recall that to solve the inverse medium problems modeled by \eqref{gov}, we propose the following least-squares formulation
\begin{equation}\label{minilsq}
\min_{u\in X, q\in Z} J(u,q) =\Vert L(u,q)- g\Vert_{X}^2+\Vert Cu-f\Vert_{Y}^2+\alpha\,\psi(q)\, .
\end{equation}
This section is devoted to the well-posedness of the total least-squares formulation \eqref{minilsq}, namely, the existence of a solution to \eqref{minilsq} and the conditional stability of the reconstruction with respect to the measurement. 
 To provide a rigorous justification of the well-posedneness, we present several assumptions on the least-squares formulation, which are minimal for the proof.
 We will verify these checkable assumptions in Section~\ref{sec:DOT} for a concrete example of such inverse medium problems.

 Let us first introduce several notations.
For simplicity, for a given $q\in Z$ (resp.\,$u\in X$), we will write $L_q$ (resp.\,$\Phi_u$) as 
 \begin{equation}
L_q u:=L(u,q) ~~(\mbox{resp.}\,\Phi_u q:=L(u,q))\,. \label{notLq} 
\end{equation}
We denote the subdifferential of the regularization term $\psi$ at $q$ by $\partial \psi(q)$, and denote the inner products of the Hilbert spaces $X$, $Y$ and $Z$ by
$(\cdot,\cdot)_X$, $(\cdot,\cdot)_Y$ and $\la \cdot, \cdot\ra$ respectively. 

\subsection{Existence of a minimizer}
We present the following assumptions on the regularization term $\psi$ and operators $L$ and $C$ in the forward model:

\begin{assumption}\label{assum1}
The regularization term $\psi$ is strictly convex and weakly lower semicontinuous.
Furthermore, $\psi$ is also coercive \cite{ito2015inverse}, i.e., $\psi(q)\geq c\Vert q\Vert^2_Z$.
\end{assumption}
This assumption implies that the level set $\{q\in Z:\psi(q)\leq c_0\}$ defines a bounded set in $Z$.

\begin{assumption}\label{assum2}
Given a constant $c_0$, for $q$ in the level set $\{q\in Z :\psi(q)\le c_0\}$, $L_q: dom(L)\to X$, where $dom(L)\subset\{u\in X:L(u,q)\in X \text{ and }Cu\in Y \text{ for all }q\in Z\text{ satisfying } \psi(q)\leq c_0\}$ with a specific side constraint that is not in the least-squares formulation \eqref{minilsq}, is a closed linear operator and is uniformly coercive, i.e., the graph norm $|u|_{W,q}:=\Vert L(u,q)\Vert_X$ satisfies $|u|_{W,q}\geq c_1\Vert u\Vert_X$ uniformly in $q$ for some constant $c_1>0$, and thus $L(u, q)=g\in X$ has a unique solution in $dom(L)$.
\end{assumption}

Under Assumption \ref{assum2}, we can define the inverse operator $L_q^{-1}:X\to dom(L)$, which is uniformly bounded by the coercivity of $L_q$. We also need the following assumption on the sequentially closedness of operators $L$ and $C$.
\begin{assumption}\label{assumLC}
 The operators $L$ and $C$ are weakly sequentially closed, i.e., if a sequence $\{(u_n,q_n)\}_{n=1}^\infty$ converges to $(u, q)$ weakly in $X\times Z$, then the sequence $\{L(u_n,q_n)\}_{n=1}^\infty$ converges to $L(u, q)$ weakly in $X$ and the sequence 
 $\{Cu_n\}_{n=1}^\infty$ converges to $C u$ weakly in $Y$.
\end{assumption}
Then we can verify the existence of the minimizers to the least-squares formulation \eqref{minilsq}.
\begin{theorem*}\label{exist}
Under Assumptions \ref{assum1}--\ref{assumLC}, there exists a minimizer $(u^\star, q^\star)$ in $X\times Z$ of the least-squares formulation \eqref{minilsq}.
\end{theorem*}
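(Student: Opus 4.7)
The plan is to apply the direct method of the calculus of variations: pick a minimizing sequence, extract a weakly convergent subsequence via boundedness, and identify the weak limit as a minimizer by weak lower semicontinuity.

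First I would observe that $J(u,q)\ge 0$ so $\inf J =: m \ge 0$ is finite, and pick a minimizing sequence $\{(u_n,q_n)\}\subset X\times Z$ with $J(u_n,q_n)\to m$. Since each of the three terms defining $J$ is nonnegative, all of them are uniformly bounded along the sequence. Using Assumption~\ref{assum1}, the bound on $\alpha\,\psi(q_n)$ together with the coercivity $\psi(q_n)\ge c\|q_n\|_Z^2$ yields a uniform bound on $\|q_n\|_Z$; in particular, $q_n$ eventually lies in a level set $\{\psi\le c_0\}$ for some $c_0$. Next, the bound on $\|L(u_n,q_n)-g\|_X$ gives $\|L(u_n,q_n)\|_X\le \sqrt{m+1}+\|g\|_X$ for large $n$, and Assumption~\ref{assum2} then forces $c_1\|u_n\|_X\le \|L(u_n,q_n)\|_X$, so $\{u_n\}$ is also uniformly bounded in $X$.

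By the Eberlein--\v{S}mulian theorem, after passing to a subsequence (not relabeled) we may assume $u_n\rightharpoonup u^\star$ in $X$ and $q_n\rightharpoonup q^\star$ in $Z$. Assumption~\ref{assumLC} gives $L(u_n,q_n)\rightharpoonup L(u^\star,q^\star)$ in $X$ and $Cu_n\rightharpoonup Cu^\star$ in $Y$, so $L(u^\star,q^\star)\in X$ and $Cu^\star\in Y$; combined with the level-set bound on $q^\star$ inherited from weak lower semicontinuity of $\psi$, this places $u^\star$ in $dom(L)$. Then I would invoke the weak lower semicontinuity of the Hilbert-space norm to get
\begin{equation*}
\|L(u^\star,q^\star)-g\|_X^2\le \liminf_{n\to\infty}\|L(u_n,q_n)-g\|_X^2,\qquad \|Cu^\star-f\|_Y^2\le \liminf_{n\to\infty}\|Cu_n-f\|_Y^2,
\end{equation*}
together with the weak lower semicontinuity of $\psi$ from Assumption~\ref{assum1}, yielding $J(u^\star,q^\star)\le\liminf_n J(u_n,q_n)=m$, hence $(u^\star,q^\star)$ is a minimizer.

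The main delicate point I anticipate is the verification that the weak limit $u^\star$ actually lies in $dom(L)$, because $dom(L)$ carries a ``specific side constraint not in the least-squares formulation''. Without further information on that constraint, one has to lean on the weak sequential closedness of $L$ (and $C$) in Assumption~\ref{assumLC} to argue that if the constraint is compatible with graph closure of $L_q$, it passes to the weak limit; otherwise one might need to incorporate the side constraint into the admissible set and assume it is weakly closed. Everything else is the standard direct-method bookkeeping, with coercivity of $\psi$ controlling $q_n$ and uniform coercivity of $L_q$ controlling $u_n$.
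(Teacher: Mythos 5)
Your proposal is correct and follows essentially the same route as the paper's proof: a minimizing sequence, uniform boundedness from the coercivity of $\psi$ and of the graph norm, weak compactness, Assumption \ref{assumLC} to identify the weak limits of $L(u_n,q_n)$ and $Cu_n$, and weak lower semicontinuity to conclude. Your extra remark about the side constraint in $dom(L)$ is a fair observation that the paper itself does not address explicitly, but it does not change the argument.
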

\begin{proof}
Since $X$ and $Z$ are nonempty, 
there exists a minimizing sequence $\{(u_n,q_n)\}_{n=1}^\infty$ in $X\times Z$ such that 
\begin{equation}
\lim_{n\to\infty} J(u_n,q_n) =\inf_{(u,q)\in X\times Z}J(u,q).\label{liminf}
\end{equation}
By Assumptions \ref{assum1}--\ref{assum2}, $\psi$ is a coercive functional and the graph norm $|\cdot|_{W,q}$ 
is uniformly coercive, thus it follows \eqref{inv0} that the sequence $\{(u_n,q_n)\}_{n=1}^\infty$ is uniformly bounded. Then there exists a subsequence of $\{(u_n,q_n)\}_{n=1}^\infty$, still denoted by $\{(u_n,q_n)\}_{n=1}^\infty$, and some $(u^\star, q^\star)\in X\times Z$ such that $u_n$ converges to $u^\star$ weakly in $X$ and $q_n$ converges to $q^\star$ weakly in $Z$. 
 As $L$ and $C$ are weakly sequentially closed by Assumption \ref{assumLC}, there hold
\begin{eqnarray}
L(u_n,q_n)\text{ converges to } L(u^\star, q^\star)\text{ weakly in }X; \quad Cu_n \text{ converges to } Cu^\star \text{ weakly in }Y.\label{weakLCconv}
\end{eqnarray}
From the weak lower semicontinuity of the norms $\Vert\cdot\Vert_X$ and $\Vert\cdot\Vert_Y$, we have 
$$\Vert L(u^\star,q^\star)-g\Vert^2_X+ \Vert Cu^\star -f\Vert^2_Y\leq \lim_{n\to\infty}\inf\lt(\Vert L(u_n, q_n)-g\Vert^2_X+ \Vert Cu_n-f\Vert^2_Y\rt).$$
 Together with the lower semicontinuity of the regularization term $\psi$, we can deduce that
 \begin{equation*}
 J(u^\star, q^\star)\leq \lim_{n\to\infty}\inf J(u_n, q_n).
 \end{equation*}
Hence it follows \eqref{liminf} that $(u^\star,q^\star)$ is indeed a minimizer of the functional $J$ in $X\times Z$.
\end{proof}

\begin{remark}\label{rmkweakclose}
While the weakly sequentially closedness in Assumption \ref{assumLC} is
nontrivial to verify for most nonlinear inverse medium problems, one can reach the weak convergence \eqref{weakLCconv} with the compactness of $dom(L)$ and the concrete formula of $L$, as shown in Section \ref{sec:DOT}.
\end{remark}

\subsection{Conditional stability}

In this subsection, we present some conditional stability estimates of the total least-squares formulation \eqref{minilsq} for the general inverse medium problems. First we introduce two pairs $(\bar{u},\bar{q})$ and $(\bar{g},\bar{f})$ that satisfy
\begin{equation}\label{0}
L(\bar{u},\bar{ q})=\bar{g},\;\; C\bar{u}=\bar{f}.
\end{equation}
Letting $(u^\star, q^\star)$ be the unique minimizer of \eqref{minilsq} in a neighborhood of $(\bar{u},\bar{q})$, we study the approximation error $(\delta u,\delta q): = (u^\star-\bar{u}, q^\star-\bar{q})$ to illustrate the stability of the least-squares formulation \eqref{minilsq} with respect to the measurement $f$ and also the term $g$ in the governing equation \eqref{gov}. Denote the residual of the governing equation by $\epsilon_1 :=L(u^\star, q^\star)-g$.
As $(u^\star,q^\star)$ is the local minimizer of functional $J$ in \eqref{inv0}, we have $J(u^\star, q^\star)\leq J(\bar{u},\bar{q})$. Therefore, by the definition of $J$ we have the inequality 
\begin{equation}
\Vert \epsilon_1\Vert_X^2+\Vert C(u^\star-\bar{u})-(f-\bar{f})\Vert_Y^2
+\alpha\psi(q^\star)\le \Vert g-\bar{g}\Vert_X^2+\Vert f-\bar{f}\Vert_Y^2+\alpha\psi(\bar{q}),\label{ineqbar}
\end{equation}
 which directly leads to the following observation on $\psi(q^\star)$:
\begin{equation}
\alpha \psi(q^\star)\le \Vert g-\bar{g}\Vert_X^2+\Vert f-\bar{f}\Vert_Y^2+\alpha\,\psi(\bar{q}).\label{bdq0}
\end{equation}
If $\psi$ is coercive, \eqref{bdq0} provides a rough estimate of the reconstruction $q^\star$ with respect to the data noise. 

We can further derive an estimate of the approximation error $\delta q$, under the following assumption on the operator $L$.
\begin{assumption}\label{assum3}
There exists a norm $\Vert\cdot\Vert_W$ on $Z$ such that for any $q\in Z$,
$$
\Vert CL_q^{-1}\Phi_{\bar{u}}(q-\bar{q})\Vert^2_Y \ge \Vert q-\bar{q}\Vert_W^2.
$$
\end{assumption}
Assumption \ref{assum3} holds when $\delta q$ belongs to a finite rank subspace $S$ of $Z$
and $\Vert\delta q\Vert_W\neq 0$ for all non-zero $\delta q\in S$. We can now deduce the following result of the approximation error $\delta q$.
\begin{lemma}\label{lemmaW}
Under Assumptions \ref{assum1}--\ref{assum3}, the approximation error 
 $\delta q$ is bounded in $W$-norm by the data noise and the regularization term, i.e., 
 \begin{equation}
\Vert \delta q\Vert_W^2+\alpha\psi(q^\star)\le c_0\left(\Vert g-\bar{g}\Vert_X^2+\Vert f-\bar{f}\Vert_Y^2+\alpha\,\psi(\bar{q})\right).\label{111}
\end{equation}
\end{lemma}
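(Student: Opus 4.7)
The plan is to use the bilinearity of $L$ together with Assumption 3 to transfer the residual and data-fitting bounds already available in \eqref{ineqbar} into a bound on $\Vert \delta q\Vert_W$, and then combine with \eqref{bdq0} to add the regularization term.

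First I would exploit the bilinearity of $L$ to write
\begin{equation*}
L(u^\star,q^\star)-L(\bar{u},\bar{q}) = L_{q^\star}\delta u + \Phi_{\bar{u}}\delta q.
\end{equation*}
Since $L(u^\star,q^\star)=\epsilon_1+g$ and $L(\bar{u},\bar{q})=\bar{g}$, rearranging gives
\begin{equation*}
\Phi_{\bar{u}}\delta q = (\epsilon_1 + g - \bar{g}) - L_{q^\star}\delta u,
\end{equation*}
and then, using that $L_{q^\star}^{-1}$ exists and is bounded by Assumption 2,
\begin{equation*}
CL_{q^\star}^{-1}\Phi_{\bar{u}}\delta q = CL_{q^\star}^{-1}(\epsilon_1+g-\bar{g}) - C\delta u.
\end{equation*}
This is the key identity: it links the quantity controlled from below by Assumption 3 (the left-hand side) to quantities controlled from above by \eqref{ineqbar} (the right-hand side).

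Next, I would apply Assumption 3 with the choice $q=q^\star$, so that $\Vert\delta q\Vert_W^2\le \Vert CL_{q^\star}^{-1}\Phi_{\bar{u}}\delta q\Vert_Y^2$, and then use the triangle inequality $\Vert a-b\Vert_Y^2\le 2\Vert a\Vert_Y^2+2\Vert b\Vert_Y^2$ together with the boundedness of $CL_{q^\star}^{-1}$ from Assumptions 1--2 to obtain
\begin{equation*}
\Vert\delta q\Vert_W^2 \le c\bigl(\Vert\epsilon_1\Vert_X^2 + \Vert g-\bar{g}\Vert_X^2\bigr) + 2\Vert C\delta u\Vert_Y^2.
\end{equation*}
The term $\Vert C\delta u\Vert_Y^2$ is handled by one more triangle inequality, $\Vert C\delta u\Vert_Y^2\le 2\Vert C\delta u-(f-\bar{f})\Vert_Y^2+2\Vert f-\bar{f}\Vert_Y^2$, after which \eqref{ineqbar} directly dominates $\Vert C\delta u-(f-\bar{f})\Vert_Y^2$ and $\Vert\epsilon_1\Vert_X^2$ simultaneously by $\Vert g-\bar{g}\Vert_X^2+\Vert f-\bar{f}\Vert_Y^2+\alpha\psi(\bar{q})$. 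Adding \eqref{bdq0} to absorb the $\alpha\psi(q^\star)$ contribution then yields \eqref{111}.

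The main obstacle I expect is not the combinatorics of the inequalities but justifying the step where Assumption 3 is applied at $q=q^\star$: one must make sure that $q^\star$ lies in the level set on which $L_{q^\star}$ is coercive (so that $L_{q^\star}^{-1}$ exists and is uniformly bounded), which follows from \eqref{bdq0} provided $\alpha$, the noise, and $\psi(\bar q)$ are controlled. Once this is in place, the chain of triangle inequalities is routine and the constant $c_0$ can be tracked explicitly in terms of the coercivity constant $c_1$ of Assumption 2 and the norm of $C$.
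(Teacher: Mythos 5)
Your proposal is correct and follows essentially the same route as the paper: the same bilinear decomposition $L(u^\star,q^\star)-L(\bar u,\bar q)=L_{q^\star}\delta u+\Phi_{\bar u}\delta q$, the same identity relating $CL_{q^\star}^{-1}\Phi_{\bar u}\delta q$ to $C\delta u$ and the residuals, and the same use of Assumption \ref{assum3} plus triangle inequalities against \eqref{ineqbar} and \eqref{bdq0}. Your closing remark about ensuring $q^\star$ stays in the level set where $L_{q^\star}^{-1}$ is uniformly bounded is a point the paper leaves implicit, but it is handled exactly as you suggest via \eqref{bdq0}.
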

\begin{proof}
Using the bilinear property of $L$, one can rewrite the difference $L(u^\star, q^\star)-L(\bar{u},\bar{q})$ as
\begin{equation}
L(u^\star, q^\star)-L(\bar{u},\bar{q})=L_{q^\star}(\delta u)+\Phi_{\bar u}(\delta q).\label{sep}
\end{equation}
By Assumption \ref{assum2}, $L_q$ admits an inverse operator $L_q^{-1}$ from $X$ to $dom(L)$, which, together with \eqref{0}, \eqref{sep} and the definition of $\epsilon_1$, implies
\begin{equation}
 \delta u=L_{q^\star}^{-1}(\epsilon_1+g-\bar{g}-\Phi_{\bar{u}}( \delta q)).\label{del}
\end{equation}
Plugging \eqref{del} into \eqref{ineqbar} leads to an inequality:
\begin{eqnarray}
\Vert CL_{q^\star}^{-1}\Phi_{\bar{u}}\delta q-(\epsilon_1+L_{q^\star}^{-1}(g-\bar{g}))+f-\bar{f}\Vert_Y^2
+\Vert \epsilon_1\Vert_X^2+\alpha\,\psi(q^\star)
\le \Vert g-\bar{g}\Vert_X^2+\Vert f-\bar{f}\Vert_Y^2+\alpha\,\psi(\bar{q}).\label{bdq1}
\end{eqnarray}
It follows Assumption \ref{assum3} that there exists a norm $\Vert\cdot\Vert_W$ such that
\begin{equation}\label{3W}
\Vert CL_q^{-1}\Phi_{\bar{u}}\delta q\Vert_Y^2 \ge \Vert \delta q\Vert^2_W.
\end{equation}
Then we can deduce from \eqref{bdq1}, the triangle inequality, the boundedness of $L_q^{-1}$ and \eqref{3W} that 
\begin{eqnarray*}
\Vert \delta q\Vert_W^2+\alpha\,\psi(q^\star)\le c_0 (\Vert g-\bar{g}\Vert_X^2+\Vert f-\bar{f}\Vert_Y^2+\alpha\,\psi(\bar{q})),
\end{eqnarray*}
where $c_0$ is a constant, which completes the proof.
\end{proof}

The rest of this section is devoted to verifying the consistency of the least-squares formulation \eqref{minilsq} as the noise level of measurement goes to zero, which is an essential property of a regularization scheme. If we choose an appropriate regularization parameter $\alpha$ according to the noise level of the data, we can deduce the convergence result of the reconstructed coefficients associated with 
the regularization parameter $\alpha$. 
More precisely, given a set of exact data $(\overline{g}, \overline{f})$, we consider a parametric family $\{(g_\alpha, f_\alpha)\}$ such that $ \| g_\alpha - \overline{g} \|_X^2 +\| f_\alpha - \overline{f} \|_Y^2 = o(\alpha)$. 
In the rest of this section, we denote the functional $J$ in \eqref{inv0} with $g= g_\alpha$ and $f=f_\alpha$ 
by $J_\alpha$, and the minimizer of $J_\alpha$ by $(u_\alpha, q_\alpha)$.
Then we justify the consistency of the least-squares formulation \eqref{minilsq} by proving the convergence of the sequence of minimizers $\{q_\alpha\}$ to the minimum norm solution \cite{ito2015inverse} of the system \eqref{0} as $\alpha\to 0$.
\begin{theorem*}
Let $\{\alpha_n\}_{n=1}^\infty\subset \mathbb{R}^+$ be a sequence converging to zero, and 
$\{(u_{\alpha_n}, q_{\alpha_n})\}_{n=1}^\infty$ be the corresponding sequence of minimizers of $J_{\alpha_n}$. 
Then 
under Assumptions \ref{assum1}--\ref{assumLC}, 
$\{(u_{\alpha_n}, q_{\alpha_n})\}_{n=1}^\infty$ has a subsequence that converges weakly to a minimum norm solution 
$(\hat{u},\hat{q})$ of the system \eqref{0}, i.e., 
\begin{eqnarray*}
&&L(\hat{u},\hat{q})=\bar{g},\quad C\hat{u}=\bar{f}.\\
&&\psi(\hat{q})\le \psi(\bar{q}) \mbox{ for all $(\bar{u},\bar{q})$ satisfying \eqref{0}}.
\end{eqnarray*}
\end{theorem*}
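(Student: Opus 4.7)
The plan is to follow the standard regularization convergence scheme: (i) use the minimizing property to derive uniform a priori bounds, (ii) extract weakly convergent subsequences, (iii) pass to the limit in the constraints using Assumption~3, and (iv) identify the limit as a minimum norm solution via weak lower semicontinuity of $\psi$.

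First I would compare the minimizer $(u_{\alpha_n}, q_{\alpha_n})$ with any fixed pair $(\bar u,\bar q)$ satisfying \eqref{0}. Since $L(\bar u,\bar q)=\bar g$ and $C\bar u=\bar f$, the minimizing property gives
$$
\|L(u_{\alpha_n},q_{\alpha_n})-g_{\alpha_n}\|_X^2+\|Cu_{\alpha_n}-f_{\alpha_n}\|_Y^2+\alpha_n\psi(q_{\alpha_n})\leq\|\bar g-g_{\alpha_n}\|_X^2+\|\bar f-f_{\alpha_n}\|_Y^2+\alpha_n\psi(\bar q).
$$
The right-hand side equals $o(\alpha_n)+\alpha_n\psi(\bar q)$ by hypothesis. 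Dividing by $\alpha_n$ yields $\psi(q_{\alpha_n})\leq\psi(\bar q)+o(1)$, and the coercivity in Assumption~1 then bounds $\|q_{\alpha_n}\|_Z$ uniformly. At the same time the residuals $\|L(u_{\alpha_n},q_{\alpha_n})-g_{\alpha_n}\|_X$ and $\|Cu_{\alpha_n}-f_{\alpha_n}\|_Y$ tend to zero; combined with $g_{\alpha_n}\to\bar g$ in $X$ and $f_{\alpha_n}\to\bar f$ in $Y$, this gives $L(u_{\alpha_n},q_{\alpha_n})\to\bar g$ and $Cu_{\alpha_n}\to\bar f$ strongly. The uniform coercivity in Assumption~2 ($|u|_{W,q}\geq c_1\|u\|_X$) then forces $\{u_{\alpha_n}\}$ to be bounded in $X$.

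Next, extract a subsequence (not relabelled) so that $u_{\alpha_n}\rightharpoonup\hat u$ in $X$ and $q_{\alpha_n}\rightharpoonup\hat q$ in $Z$. Assumption~3 yields $L(u_{\alpha_n},q_{\alpha_n})\rightharpoonup L(\hat u,\hat q)$ weakly in $X$ and $Cu_{\alpha_n}\rightharpoonup C\hat u$ weakly in $Y$. Uniqueness of weak limits combined with the strong convergences established above gives
$$
L(\hat u,\hat q)=\bar g,\qquad C\hat u=\bar f,
$$
so $(\hat u,\hat q)$ solves \eqref{0}. Finally, weak lower semicontinuity of $\psi$ (Assumption~1) together with $\psi(q_{\alpha_n})\leq\psi(\bar q)+o(1)$ gives
$$
\psi(\hat q)\leq\liminf_{n\to\infty}\psi(q_{\alpha_n})\leq\psi(\bar q),
$$
and since $(\bar u,\bar q)$ was arbitrary among solutions of \eqref{0}, $(\hat u,\hat q)$ is a minimum norm solution.

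The main obstacle will be the limit passage in the bilinear term $L(u_{\alpha_n},q_{\alpha_n})$, where \emph{both} arguments converge only weakly; ordinary weak-weak continuity of a bilinear form is false, so one must genuinely invoke Assumption~3. As Remark~\ref{rmkweakclose} flags, in concrete situations this step will rely on a compact embedding for $\mathrm{dom}(L)$ that upgrades one of the two weak convergences to a strong one, so the structure of $L$ in the application (e.g.\ the DOT problem of Section~\ref{sec:DOT}) ultimately underwrites this part of the argument.
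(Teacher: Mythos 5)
Your proposal is correct and follows essentially the same route as the paper's proof: compare with a solution of \eqref{0} to get the a priori bounds and the estimate $\psi(q_{\alpha_n})\le\psi(\bar q)+o(1)$, extract a weakly convergent subsequence, combine the strong convergence of the residuals with the weak sequential closedness of $L$ and $C$ (Assumption~\ref{assumLC}) to identify the limit as a solution of \eqref{0}, and conclude by weak lower semicontinuity of $\psi$. Your ordering (deriving the $\psi$-bound before extracting the subsequence) is if anything slightly cleaner, and your closing remark about the weak-weak limit of the bilinear term correctly pinpoints where Assumption~\ref{assumLC} is genuinely needed, exactly as the paper's Remark~\ref{rmkweakclose} indicates.
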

\begin{proof}
As $(u_{\alpha_n},q_{\alpha_n})$ is the minimizer of $J_{\alpha_n}$, there holds
\begin{equation}\label{ineqJ}
J_{\alpha_n}(u_{\alpha_n},q_{\alpha_n})\leq J_{\alpha_n}(\bar{u},\bar{q}).
\end{equation}
By definition, $J_{\alpha_n}(\bar{u},\bar{q})= \Vert L(\bar{u},\bar{q})- g_{\alpha_n}\Vert_{X}^2+\Vert C\bar{u}-f_{\alpha_n}\Vert_{Y}^2+\alpha_n\,\psi(\bar{q})$, and thus $\{J_{\alpha_n}(u_{\alpha_n},q_{\alpha_n})\}_{n=1}^\infty$ is uniformly bounded. 
 Following the similar argument in the proof of Theorem \ref{exist}, there exist a subsequence of $\{(u_{\alpha_n}, q_{\alpha_n})\}_{n=1}^\infty$, still denoted as $\{(u_{\alpha_n}, q_{\alpha_n})\}_{n=1}^\infty$, and some $(\hat{u},\hat{q})$ such that 
 $\{(u_{\alpha_n},q_{\alpha_n})\}_{n=1}^\infty$ converges to $ (\hat{u}, \hat{q})$ weakly in $X\times Z$. 
 By Assumption \ref{assumLC}, we have
\begin{eqnarray*}
L(u_{\alpha_n},q_{\alpha_n})\text{ converges to } L(\hat{u}, \hat{q})\text{ weakly in }X; \quad Cu_{\alpha_n} \text{ converges to } C\hat{u}\text{ weakly in }Y.
\end{eqnarray*}
 From \eqref{ineqJ} one can also derive that
 \begin{equation}\label{LCalpha}
  \Vert L(u_{\alpha_n},q_{\alpha_n})- g_{\alpha_n}\Vert_{X}^2+\Vert Cu_{\alpha_n}-f_{\alpha_n}\Vert_{Y}^2+\alpha_n\,\psi(q_{\alpha_n})\leq \Vert \bar{g}- g_{\alpha_n}\Vert_{X}^2+\Vert \bar{f}-f_{\alpha_n}\Vert_{Y}^2+\alpha_n\,\psi(\bar{q}),
  \end{equation}
  which implies 
  \begin{eqnarray*}
L(u_{\alpha_n},q_{\alpha_n})\text{ converges to } \bar{g} \text{ strongly in }X; \quad Cu_{\alpha_n} \text{ converges to } \bar{f} \text{ strongly in }Y.
\end{eqnarray*}
  Therefore, as $\alpha_n\to 0$,  $\{(u_{\alpha_n},q_{\alpha_n})\}_{n=1}^\infty$ will converge to $(\hat{u},\hat{q})$ satisfying
\begin{equation}\label{hatuq}
L(\hat{u},\hat{q})=\bar{g},\quad C\hat{u}=\bar{f}.
\end{equation}
Recall that one has an estimate of $\psi(q_{\alpha_n})$ from \eqref{LCalpha} that \begin{equation}
\alpha_n\psi(q_{\alpha_n})\le \Vert g_{\alpha_n}-\bar{g}\Vert_X^2+\Vert f_{\alpha_n}-\bar{f}\Vert_Y^2+\alpha_n\,\psi(\bar{q}),\nonumber
\end{equation}
which leads to 
\begin{equation}
\psi(q_{\alpha_n})\le\psi(\bar{q})+o(1),\nonumber
\end{equation}
as $\alpha_n\to 0$. Using the lower semicontinuity of functional $\psi$, one obtain that 
$$
\psi(\hat{q})\le \psi(\bar{q}) \mbox{ for all $(\bar{u},\bar{q})$ satisfying \eqref{0}}.
$$
Together with \eqref{hatuq}, we conclude that  $(\hat{u},\hat{q})$ is a minimum norm solution of \eqref{0}.
\end{proof}

\end{section}
\begin{section}{ADI method and convergence analysis}\label{sec:ADI}
An important feature of the least-squares formulation \eqref{minilsq} is that the functional $J(u,q)$ is quadratical 
and convex with respect to each variable $u$ and $q$. 
This unique feature facilitates us naturally to minimize the functional $J$ effectively by the alternating direction iterative (ADI) 
method \cite{CsiszarAM1984, byrne2014iterative} so that only two quadratical and convex suboptimizations of one variable are required at each iteration. 
We shall carry out the convergence analysis of the ADI method in this section for general inverse medium problems.

\medskip
{\bf Alternating direction iterative method} for the minimization of \eqref{inv0}.

Given an initial pair $(u_0, q_0)$, find a sequence of pairs 
$(u_k, q_k)$ for $k\ge 1$ as below: 
\begin{itemize}
\item Given $q_k$, find $u=u_{k+1}\in X$ by solving 
\begin{equation}  \label{A-1}
\min_{u\in X} \Vert  L(u,q_k)- g\Vert_{X}^2+\Vert Cu-f\Vert_{Y}^2\,.
\end{equation}
\item  Given $u_{k+1}$, find $q=q_{k+1}\in Z$ by solving 
\begin{equation}  \label{A-2}
\min_{q\in Z} \Vert L(u_{k+1},q)- g\Vert_{X}^2+\alpha\psi(q)\,.
\end{equation}
\end{itemize}

We shall establish the convergence of the sequence $\{(u_k,q_k)\}_{k=1}^\infty$ generated by the ADI method, 
under Assumptions~\ref{assum1}--\ref{assumLC} on the least-squares formulation \eqref{minilsq}.  For this purpose, we would like to introduce the Bregman distance \cite{bregman1967relaxation} with respect to  $\xi\in\partial \psi(p)$,
\begin{equation}\label{breg}
E(q, p):=\psi(q)-\psi(p)-\la\xi,q- p\ra ~~\forall\, q, p\in Z\,,
\end{equation}
which is always nonnegative for convex function $\psi$. 
Now we are ready to present the following lemma on convergence of the sequence $\{(u_k,q_k)\}_{k=1}^\infty$ generated by \eqref{A-1}--\eqref{A-2}. 
\begin{lemma}
Under Assumptions \ref{assum1}--\ref{assumLC}, the sequence $\{(u_k,q_k)\}_{k=1}^\infty$ generated by the ADI method \eqref{A-1}--\eqref{A-2} converges to a pair $(u^\star, q^\star)$ that satisfies the optimality condition of \eqref{minilsq}:
\begin{equation} \label{nec}
\begin{aligned}
(L_q)^*(L_q u-g)+C^*(Cu-f)=0,
\\ 
-2(\Phi_u)^*(\Phi_u q-g) \in \partial\big(\alpha\psi(q)\big)\,.
\end{aligned} 
\end{equation}
\end{lemma}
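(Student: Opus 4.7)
The plan is to show that the ADI iterates produce a monotonically decreasing energy, derive quantitative consecutive-difference estimates from the optimality of each subproblem, and then pass to the limit in the Euler conditions by weak compactness plus the weak sequential closedness built into Assumption~\ref{assumLC}.

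First, I would note that each subproblem is well-posed: the $u$-step functional is quadratic and coercive on $X$ by Assumption~\ref{assum2}, while the $q$-step functional is quadratic plus the strictly convex, coercive $\alpha\psi$ by Assumption~\ref{assum1}. Hence $u_{k+1}$ and $q_{k+1}$ exist uniquely and satisfy the Euler conditions
$$
(L_{q_k})^{*}\bigl(L(u_{k+1},q_k)-g\bigr)+C^{*}(Cu_{k+1}-f)=0,\qquad -2(\Phi_{u_{k+1}})^{*}\bigl(\Phi_{u_{k+1}}q_{k+1}-g\bigr)\in\alpha\,\partial\psi(q_{k+1}).
$$
By construction $J(u_{k+1},q_{k+1})\le J(u_{k+1},q_k)\le J(u_k,q_k)$, so $\{J(u_k,q_k)\}$ is decreasing and nonnegative, hence convergent.

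Second, I would expand each quadratic around the new iterate and use the Euler conditions to cancel the cross-terms, producing the identities
$$
J(u_k,q_k)-J(u_{k+1},q_k)=\|L(u_k-u_{k+1},q_k)\|_X^{2}+\|C(u_k-u_{k+1})\|_Y^{2}\ge c_1^{2}\|u_k-u_{k+1}\|_X^{2},
$$
$$
J(u_{k+1},q_k)-J(u_{k+1},q_{k+1})=\|\Phi_{u_{k+1}}(q_k-q_{k+1})\|_X^{2}+\alpha\,E(q_k,q_{k+1}),
$$
with $E$ the Bregman distance associated to the subgradient produced by the $q$-step optimality. Summing in $k$ and using convergence of $J(u_k,q_k)$ yields $\|u_{k+1}-u_k\|_X\to 0$ and $E(q_k,q_{k+1})\to 0$. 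Coercivity from Assumptions~\ref{assum1}--\ref{assum2} keeps the iterates bounded in $X\times Z$, so along a subsequence $(u_{k_j},q_{k_j})\rightharpoonup(u^{\star},q^{\star})$; combined with the first estimate this also gives $u_{k_j+1}\rightharpoonup u^{\star}$, and strict convexity of $\psi$ together with $E(q_{k_j},q_{k_j+1})\to 0$ allows me to pass to a further subsequence along which $q_{k_j+1}\rightharpoonup q^{\star}$ as well.

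Finally, I would pass to the limit in both Euler conditions against a fixed test direction. The weak sequential closedness of $L$ and $C$ (Assumption~\ref{assumLC}) converts $(u_{k_j+1},q_{k_j})\rightharpoonup(u^{\star},q^{\star})$ into $L(u_{k_j+1},q_{k_j})\rightharpoonup L(u^{\star},q^{\star})$ and $Cu_{k_j+1}\rightharpoonup Cu^{\star}$, while the maximal monotonicity of $\partial\psi$ (which is weak–strong closed for convex lower semicontinuous $\psi$) transfers the subdifferential inclusion to $(u^{\star},q^{\star})$, yielding exactly \eqref{nec}.

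The main obstacle is the nonlinear coupling hidden in the first Euler condition: its weak form involves pairings such as $(L(u_{k_j+1},q_{k_j})-g,\,L(v,q_{k_j}))_X$, i.e.\ a product of two sequences that are only weakly convergent, and weak–weak products are not continuous in general. To close this step I would rely on the bilinear structure together with extra compactness of $\mathrm{dom}(L)$ available in the concrete settings addressed in Remark~\ref{rmkweakclose}, so that either $L(u_{k_j+1},q_{k_j})-g$ or the test-side quantity $L(v,q_{k_j})$ (with fixed $v$) can be upgraded to strong convergence. A parallel delicacy arises when transferring the inclusion, where the uniform bound on $\Phi_{u_{k_j+1}}q_{k_j+1}-g$ combined with weak convergence of $u_{k_j+1}$ must be used to get strong enough convergence of the right-hand side before invoking closedness of $\partial\psi$.
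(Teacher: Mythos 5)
Your energy-decay machinery is exactly the paper's: the two identities you derive by expanding the quadratics around the new iterate and invoking the Euler conditions are the paper's equations \eqref{uq1} and \eqref{uq2}, the Bregman distance enters in the same way, and the telescoping sum is the paper's estimate \eqref{est}. Where you diverge is the endgame. From $\sum_k\Vert L(u_{k+1}-u_k,q_k)\Vert_X^2<\infty$ and $\sum_k E(q_k,q_{k+1})<\infty$ the paper asserts that $\{u_k\}$ is Cauchy (via the coercivity of the graph norm) and that $\{q_k\}$ converges (via strict convexity of $\psi$), obtaining strong limits; you instead draw only the logically safe conclusions $\Vert u_{k+1}-u_k\Vert_X\to 0$, $E(q_k,q_{k+1})\to 0$, and pass to weak subsequential limits. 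Your caution is warranted: square-summability of increments does not by itself give the Cauchy property (the partial sums of the harmonic series are the standard counterexample), so the paper's strong-convergence claim is the weaker link, not yours. The price you pay is that you must then verify the optimality system \eqref{nec} by passing to the limit in the Euler conditions with only weak convergence in hand, and you correctly identify the weak--weak product $(L(u_{k_j+1},q_{k_j})-g,\,L(v,q_{k_j}))_X$ and the transfer of the subdifferential inclusion as the genuine obstacles.

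That flagged step is a real gap in your argument as written: Assumption~\ref{assumLC} only gives weak convergence of $L(u_{k_j+1},q_{k_j})$ and does not make the pairing against the $q_{k_j}$-dependent test quantity converge, so you would indeed need the extra compactness of $dom(L)$ invoked in Remark~\ref{rmkweakclose} (as is available in the DOT setting of Section~\ref{sec:DOT}, where the iterates are bounded in $H^1\times H(\mathrm{div})\times H^1\times H^1$) to upgrade one factor to strong convergence; similarly, closedness of $\partial\psi$ as a maximal monotone graph requires strong convergence of the subgradients $-2\Phi_{u_{k_j+1}}^*(\Phi_{u_{k_j+1}}q_{k_j+1}-g)$, which weak convergence of $u_{k_j+1}$ alone does not supply. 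You should be aware, however, that the paper does not close this step either: its proof stops at showing $J(u_k,q_k)\to J(u^\star,q^\star)$ and never actually verifies that the limit satisfies \eqref{nec}. So your proposal reproduces the paper's argument where the paper is complete, and is more honest precisely where the paper is not.
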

\begin{proof}
Using the optimality condition satisfied by the minimizer $u_{k+1}$ of \eqref{A-1}, we deduce
\begin{align}
0&=\big(L_{q_k}^*(L_{q_k}u_{k+1}-g),u_{k+1}-u_{k}\big)_X+\big(C^*(Cu_{k+1}-f),u_{k+1}-u_{k}\big)_Y
\nonumber \\ 
 \ds\q &=\frac{1}{2}\Big(\Vert L_{q_k}u_{k+1}-g\Vert_X^2+\Vert Cu_{k+1}-f\Vert_Y^2
-\lt(\Vert L_{q_k}u_{k}-g\Vert_X^2+\Vert Cu_{k}-f\Vert_Y^2\rt) \nonumber\\ 
\ds\q\q &\q+\Vert L_{q_k}(u_{k+1}-u_{k})\Vert_X^2+\Vert C(u_{k+1}-u_{k})\Vert_Y^2\Big)\,.\label{uq1}
\end{align} 
Similarly, from the optimality condition satisfied by the minimizer $q_{k+1}$ of \eqref{A-2}, we obtain 
\begin{equation*}
-2\Phi_{u_{k+1}}^*(\Phi_{u_{k+1}} q_{k+1}-g) \in \partial\big(\alpha\psi(q_{k+1})\big).
\end{equation*}
 Taking $q=q_k$, $p=q_{k+1}$ and 
$\xi = -2\Phi_{u_{k+1}}^*(\Phi_{u_{k+1}} q_{k+1}-g)$ in \eqref{breg}, we can derive that 
\begin{equation}
\la-2\Phi_{u_{k+1}}^*(\Phi_{u_{k+1}} q_{k+1}-g), q_{k}-q_{k+1}\ra +\alpha\psi(q_{k+1})-\alpha\psi(q_{k})+E(q_{k},q_{k+1}) = 0. \label{eqqk}
\end{equation}
 The following equality hold for the first term in \eqref{eqqk}:
\begin{align}
&\, \, \quad\la\Phi_{u_{k+1}}^*(\Phi_{u_{k+1}} q_{k+1}-g),q_{k+1}-q_{k}\ra \nonumber\\
&=\la\Phi_{u_{k+1}} q_{k+1}-g,\Phi_{u_{k+1}}(q_{k+1}-q_{k})\ra \nonumber\\
&=\frac{1}{2}\la(\Phi_{u_{k+1}} q_{k+1}-g)+(\Phi_{u_{k+1}} q_{k}-g),\Phi_{u_{k+1}}q_{k+1}-g-(\Phi_{u_{k+1}}q_{k}-g)\ra \nonumber\\
&\quad +\frac{1}{2}\la(\Phi_{u_{k+1}} q_{k+1}-g)-(\Phi_{u_{k+1}} q_{k}-g),\Phi_{u_{k+1}}q_{k+1}-g-(\Phi_{u_{k+1}}q_{k}-g)\ra \nonumber\\
&=\frac{1}{2}\left(\Vert \Phi_{u_{k+1}} q_{k+1}-g\Vert_X^2-\Vert \Phi_{u_{k+1}} q_{k}-g\Vert_X^2+\Vert \Phi_{u_{k+1}}(q_{k+1}-q_{k})\Vert_X^2\right).\label{qqk}
\end{align}

Plugging \eqref{qqk} into \eqref{eqqk}, it follows that  
\begin{equation}
\Vert \Phi_{u_{k+1}} q_{k+1}-g\Vert_X^2+\alpha\psi(q_{k+1})-\lt(\Vert \Phi_{u_{k+1}} q_{k}-g\Vert_X^2+\alpha\psi\lt( q_{k}\rt)\rt)+\Vert \Phi_{u_{k+1}}( q_{k+1}- q_{k})\Vert_X^2
+E( q_{k}, q_{k+1})=0.\label{uq2}
\end{equation}
As the sequence $\{(u_k,q_k)\}_{k=1}^\infty$ is generated by ADI method \eqref{A-1}--\eqref{A-2}, in each iteration the updated $u_{k+1}$(resp. $q_{k+1}$) minimizes the functional $J(u, q_{k})$ (resp. $J(u_{k+1}, q)$), which would lead to
\begin{equation*}
J(u_k,  q_k) \ge J(u_{k+1}, q_k)\ge  J(u_{k+1}, q_{k+1})
\end{equation*}
for all $k\geq 0$.
Then we further derive from \eqref{uq1} and \eqref{uq2} that for any $m\geq 1$, $J(u_m, q_m)$ satisfies
\begin{align}
&J(u_m, q_m) + \sum_{k=0}^{m-1} E( q_k, q_{k+1}) \nonumber \\
&+\sum_{k=0}^{m-1}\left(\Vert L\lt(u_{k+1}, q_k\rt)-L\lt(u_k, q_k\rt)\Vert_X^2+\Vert C\lt(u_{k+1}-u_k\rt)\Vert_Y^2 +\Vert L\lt(u_{k+1}, q_{k+1}\rt)-L\lt(u_{k+1}, q_k\rt)\Vert_X^2\right) \nonumber \\
&\leq J(u_0, q_0),\label{est}
\end{align}

 which implies that  $\sum_{k=0}^{\infty}\Vert L\lt(u_{k+1}-u_k, q_k\rt)\Vert_X^2$ is bounded.  Then we can conclude using Assumption~\ref{assum2}
that $\{u_k\}_{k=1}^\infty$ forms a Cauchy sequence and thus converges to some $u^\star\in {dom(L)}$. Since $\sum_{k=0}^{m-1} E(q_k, q_{k+1})$ is uniformly bounded for all $m$, we can derive that $\{q_k\}_{k=1}^\infty$ converges to some $q^\star\in Z$ from the strict convexity of $\psi$. As the sequence $\{J(u_k,q_k)\}_{k=1}^\infty$ is monotone decreasing, there exists a limit $J^\star$.  Following the similar argument in the proof of Theorem \ref{exist}, we conclude that  $J^\star = J(u^\star, q^\star)$ by Assumption~\ref{assumLC}.
This completes the proof of the convergence. 
\end{proof}

\begin{remark}
 If \eqref{nec} has a unique solution in a neighborhood of initial guess $(u_0, q_0)$, then the solution is a local minimizer of the least-squares formulation \eqref{minilsq}, and we can apply the ADI method to generate a sequence that converges to this local minimizer 
as a plausible approximation of the exact coefficients.
\end{remark}

\end{section}

\begin{section}{Diffusive optical tomography}\label{sec:DOT}
  We take the diffusive optical tomography (DOT) as an example to illustrate the total least-squares approach for the concrete inverse medium problem in this section. We will introduce the mixed regularization term, present the least-squares formulation of the first-order system of DOT, and then verify the assumptions in Section~\ref{sec:well} for the proposed formulation. 
We shall use the standard notations for Sobolev spaces. 
The objective of the DOT problem is to determine the unknown diffusion and absorption coefficients $\sigma$ and $\mu$ 
 simultaneously in a Lipschitz domain $\Omega\in \mathbb{R}^d$ ($d=2,3$) from the model equation 
\begin{equation}\label{eq:dot}
-\nabla \cdot (\sigma(x) \nabla u)+\mu(x)\,u=0 ~~\mbox{in} ~~\Om
\end{equation}
 with a pair of Cauchy data $(f,h)$ on the boundary $\partial\Omega$, i.e., 
\begin{equation*}
u|_{\partial\Omega}=f,\quad\frac{\partial u}{\partial \nu}|_{\partial\Omega}=h.
\end{equation*}
Throughout this section, we shall use the notation $g: = \delta_{\partial\Omega} h$ in the total least-squares formulation, where $\delta_{\partial \Omega}$ denotes the Neumann to source map. To define the Neumann to source map, we first introduce the  boundary restriction mapping  $\gamma_0$ on $\mathcal{D}(\overline{\Omega})$, i.e., 
$\gamma_0 u$ denotes the boundary value of $u\in \mathcal{D}(\overline{\Omega})$. Then we use  $T$ to denote the trace operator \cite{girault2012finite}, which is formally defined to be the unique linear continuous extension of $\gamma_0$ as an operator from $L^2(\Omega)$ onto $H^{-1/2}(\partial\Omega)$.  
Using Riesz representation theorem, there exists a function in $L^2(\Omega)$, denoted by $\delta_{\partial\Omega} h$, such that for any $v\in L^2(\Omega)$,
$$(\delta_{\partial\Omega} h, v)_{L^2(\Omega)} = \langle h, Tv\rangle_{H^{1/2}(\partial\Omega), H^{-1/2}(\partial\Omega)},$$
which will be denoted by $g$ in the least-squares formulation.
%

\begin{subsection}{Mixed regularization}

In this subsection, we present the mixed regularization term for the DOT problem.
As the regularization term $\psi$ in the least-squares formulation \eqref{minilsq} shall encode the priori information, e.g., sparsity, continuity, lower or upper bound and other properties of the unknown coefficients, it is essential to choose an appropriate regularization term for a concrete inverse problem to ensure satisfactory reconstructions. 
In this work, we introduce a mixed $L^1$--$H^1$ regularization term $\phi$ for a coefficient $q:\Omega\to \mathbb{R}$:
\begin{equation}\label{reg}
\phi(q; \alpha, \beta, q_{_0}, {q_{_1}})=\int_\Omega \frac{\alpha}{2}(|\nabla q|^2+q^2)dx+
\int_\Omega\beta|q|\,dx+\chi(q; q_{_0}, q_{_1}),
\end{equation}
In this revision we change the semi-norm of $H^1$ into the full norm in the regularization to ensure the strict convexity of $\phi$.
where $\chi(q; q_{_0}, q_{_1})$ is given by 
\begin{equation*}
\chi(q; q_{_0}, q_{_1})=
\left\{
\begin{array}{cc} 0 & {q_{_0}}\le q\le q_{_1}, \\ \\ 
\infty & \mbox{otherwise,}\end{array}
 \right.
\end{equation*}
and $q_{_0}$ and $q_{_1}$ are the {lower and upper} bounds of the coefficient $q$ respectively.  The first term $\int_\Omega \frac{\alpha}{2}(|\nabla q|^2+q^2)dx$ is the $H^1$ regularization term, the second term $\int_\Omega\beta|q|\,dx$ corresponds to the $L^1$ regularization, and the third term  $\chi(q; q_{_0},q_{_1})$ enforces the reconstruction to meet the constrains of the coefficient.

In practice, the $L^1$ regularization enhances the reconstruction and helps find geometrically sharp inclusions, but might cause spiky results. 
The $H^1$ regularization generates the reconstructions with overall clear structures, while the retrieved background may be blurry. 
Compared with other more conventional regularization methods, this mixed regularization technique in \eqref{reg} combines two penalty terms and effectively promotes multiple features of the target solution, that is, it enhances the sparsity of the solution while it still preserves the overall structure at the same time. 
The scalar parameters $\alpha$ and $\beta$ need to be chosen carefully to compromise the two regularization terms.
\end{subsection}
\begin{subsection}{First-order formulation of DOT}
As we have emphasized in the Introduction, it may have some advantages to make use of 
the residuals of the first-order system of the model equation \eqref{eq:dot}, instead of the residual of the 
original equation in the formulation \eqref{minilsq}, when 
we aim at recovering two unknown coefficients $\sigma$ and $\mu$ simultaneously. 
Similarly to the formulation \eqref{minilsq}, we now have $ q=(\sigma, \mu)$, and the operator $L$ is given by 
\begin{equation}\label{opeL}
L(u,\m{p},\sigma,\mu)=\begin{pmatrix}
-\nabla\cdot \m{p}+\mu\,u\\
\m{p}-\sigma\,\nabla u
\end{pmatrix},
\end{equation}
where we have introduced an auxiliary vector flux $\m{p}$ and each entry of $L$ is of a first-order form such that two coefficients are separated naturally. Clearly, $L(v, q)$ is still bilinear with respect to the state variables 
$v=(u,\m{p})$ and coefficients $q=(\sigma, \mu)$.  
Using the first-order system, we can then come to the following total least-squares functional: 
\begin{equation}  \label{inv-sigmamu}
J(u, \m{p},\sigma,\mu)=\Vert-\nabla\cdot \m{p}+\mu(x)u- g\Vert_{L^2(\Omega)}^2+\Vert \m{p}-\sigma(x)\,\nabla u\Vert_{(L^2(\Omega))^d}^2+\Vert Cu-f\Vert_{L^2(\partial\Omega)}^2
+\psi_1(\sigma)+\psi_2(\mu),
\end{equation}
where $C$ is the trace operator,  and $\psi_1(\sigma) =\phi(\sigma; \alpha_\sigma, \beta_\sigma, \sigma_{_0},\sigma_{_1})$ and $\psi_2(\mu) =\phi(\mu; \alpha_\mu, \beta_\mu, \mu_{_0},\mu_{_1})$ are the corresponding  mixed regularization terms of $\sigma$ and $\mu$ defined as in \eqref{reg},  $ \mu_{_0}$, $\mu_{_1}$ are lower and upper bounds of $\mu$,  and $ \sigma_{_0}$, $\sigma_{_1}$ are lower and upper bounds of $\sigma$.  
We shall minimize 
\eqref{inv-sigmamu} over $(u, \m{p},\sigma,\mu)\in L^2(\Omega)\times (L^2(\Omega))^d\times L^2(\Omega)\times L^2(\Omega)$, that is, $X =  L^2(\Omega)\times (L^2(\Omega))^d$, $Z =  L^2(\Omega)\times L^2(\Omega)$.  

We will apply the ADI method to solve the least-squares formulation of $v=(u,\m{p})$ and $q=(\sigma, \mu)$:
\begin{equation}
\min_{(v,q)\in X\times Z} J(u, \m{p},\sigma,\mu)=\Vert-\nabla\cdot \m{p}+\mu(x)u- g\Vert_{L^2(\Omega)}^2+\Vert \m{p}-\sigma(x)\,\nabla u\Vert_{(L^2(\Omega))^2}^2+\Vert Cu-f\Vert_{L^2(\partial\Omega)}^2
+\psi_1(\sigma)+\psi_2(\mu).\label{DOTminilsq}
\end{equation}
Given an initial guess $(u_0, \m{p}_0,\sigma_0,\mu_0)$, we find a sequence $(u_k, \m{p}_k,\sigma_k,\mu_k)$ for $k\geq 1$ as below:
\begin{itemize}
\item
 Given $\sigma_k$, $\mu_k$, find $u=u_{k+1}$, $\m{p}=\m{p}_{k+1}$ by solving
\begin{equation*}
\min_{(u,\m{p})\in X}J_1(u,\m{p})=\Vert-\nabla\cdot \m{p}+\mu_k(x)u-g\Vert_{L^2(\Omega)}^2+\Vert \m{p}-\sigma_k(x)\,\nabla u\Vert_{(L^2(\Omega))^d}^2+\Vert Cu-f\Vert_{L^2(\partial\Omega)}^2,
\end{equation*}
\item
Given $u_{k+1}$, $\m{p}_{k+1}$, find $\sigma=\sigma_{k+1}$, $\mu =\mu_{k+1}$ by solving
\begin{equation*}
\min_{(\sigma,\mu)\in Z}J_2(\sigma,\mu)=\Vert-\nabla\cdot \m{p}_{k+1}+\mu(x)u_{k+1}-g\Vert_{L^2(\Omega)}^2+\Vert \m{p}_{k+1}-\sigma(x)\,\nabla u_{k+1}\Vert_{(L^2(\Omega))^d}^2
+\psi_1(\sigma)+\psi_2(\mu).
\end{equation*}
\end{itemize}
\end{subsection}

\begin{subsection}{Well-posedness of the least-squares formulation for DOT}
Recall that we have proved the well-posedness of the least-squares formulation 
in Section \ref{sec:well} for general inverse medium problems. This part is devoted to the verification of assumptions in Section \ref{sec:well} for the formulation \eqref{DOTminilsq} to ensure its well-posedness. 

Firstly we consider Assumption \ref{assum1} on the regularization terms. 
It is observed from the formula \eqref{reg} that each term of $\psi_1(\sigma)$ and $\psi_2(\mu)$ in \eqref{DOTminilsq} is convex and weakly lower semicontinuous. 
As the first term of \eqref{reg} is the $H^1$ regularization term, $\psi_1(\sigma)$ and $\psi_2(\mu)$ are strictly convex by definition.
One can also observe that there exists a positive constant $c$ such that
\begin{equation}\label{DOTcoer}
\begin{aligned}
\psi_1(\sigma) = \int_\Omega \frac{\alpha}{2}(|\nabla \sigma|^2+\sigma^2)dx+
\int_\Omega\beta|\sigma|\,dx+\chi(\sigma; \sigma_{_0}, \sigma_{_1})\geq c \Vert\sigma\Vert^2_{H^1(\Omega)},\\
\psi_2(\mu) = \int_\Omega \frac{\alpha}{2}(|\nabla \mu|^2+\mu^2)dx+
\int_\Omega\beta|\mu|\,dx+\chi(\mu; \mu_{_0}, \mu_{_1})\geq c \Vert\mu\Vert^2_{H^1(\Omega)},
\end{aligned}
\end{equation}
which imply that $\psi_1$ and $\psi_2$ are coercive in $H^1$-norm.

Next we verify Assumption \ref{assum2} on the closedness of $L_q$ and the coercivity of its graph norm. 
For fixed $q=(\sigma,\mu)$, we denote the entries of $L_q$ by $L_{q,1}$, $L_{q,2}$, i.e.,  for $\delta v= (\delta u,\delta\m{p})\in dom(L)$, 
$$L_{q,1} \delta v =-\nabla\cdot \delta\m{p}+\mu\delta u,\quad L_{q,2} \delta v = \delta\m{p}-\sigma\nabla\delta  u\,.$$
Applying H\"{o}lder's inequality leads to 
\begin{align}
\Vert L_q\delta v\Vert_{X}\leq& \Vert \delta\m{p}\Vert_{H(\text{div}, \Omega)}+\Vert \delta u\Vert_{H^1(\Omega)}\Vert \mu\Vert_{L^{\infty}(\Omega)}+\Vert \delta u\Vert_{H^1(\Omega)}\Vert \sigma\Vert_{L^{\infty}(\Omega)}  \label{contL}
\end{align}
for $q$ in the level set $\{q\in Z: \psi(q) = \psi_1(\sigma)+\psi_2(\mu)\leq c_0\}$ for some constant $c_0$. Then we can deduce from \eqref{contL} that $L_q$ is a closed linear operator.
Next, we verify the coercivity of the graph norm. For simplicity, we consider the model problem with homogeneous Neumann boundary condition $h = 0$ for the state variable, and set a side constraint for $dom(L)$ as $\m{p}\cdot\nu =0$ on $\partial \Omega$. Introduce the following notations
\begin{equation}
\sigma \nabla u-\m{p}=\tilde{m},\quad -\nabla\cdot \m{p}+\mu\,u=\tilde{g}. \label{mg}
\end{equation}
From \eqref{mg} and integration by part, we can derive
\begin{equation*}
\int_\Omega \sigma \nabla u\cdot\nabla u dx+\int_\Omega\mu u\cdot udx=\int_\Omega\nabla u\cdot \tilde{m}dx+\int_\Omega u\cdot \tilde{g}dx\,,
\end{equation*}
which implies 
\begin{equation}\label{graphsecond}
\Vert \nabla u\Vert_{(L^2(\Omega))^d}^2+\Vert u\Vert_{L^2(\Omega)}^2 +\Vert \nabla \cdot \m{p}\Vert_{L^2(\Omega)}^2+\Vert \m{p}\Vert_{(L^2(\Omega))^d}^2\le c\,(\Vert \tilde{m}\Vert_{(L^2(\Omega))^d}^2+\Vert \tilde{g}\Vert_{L^2(\Omega)}^2)
\end{equation}
for some constant $c>0$ when $q=(\sigma,\mu)$ satisfies $\psi_1(\sigma)\leq c_0$ and $\psi_2(\mu)\leq c_0$ for some constant $c_0>0$. In this way we verify that the graph norm corresponding to $L_q$ defined as 
$$|(u,\m{p}) |^2_{W,q}= \Vert\sigma \nabla u-\m{p}\Vert_{(L^2(\Omega))^d}^2+\Vert -\nabla\cdot \m{p}+\mu\,u\Vert_{L^2(\Omega)}^2$$
 is uniformly coercive.
 
Then we consider Assumption~\ref{assumLC} on the weakly sequentially closedness of operators $L$ and $C$. 
It is noted that Assumption~\ref{assumLC} is applied in Section~\ref{sec:well} for general inverse medium problems to prove that the operator $L$ maps a subsequence of a bounded sequence $\{(u_n, q_n)\}_{n=1}^\infty$ to a converging sequence $\{L(u_n,q_n)\}_{n=1}^\infty$ with its limit equal to $L(u,q)$, where $(u,q)$ is the limit of $\{(u_n, q_n)\}_{n=1}^\infty$.
 In the analysis of the concrete DOT problem \eqref{eq:dot}, the corresponding sequence $\{(u_n, \m{p}_n,\sigma_n,\mu_n)\}_{n=1}^\infty$ is actually bounded in a stronger norm than  $\Vert\cdot\Vert_X$ and $\Vert\cdot\Vert_Z$, as shown in \eqref{DOTcoer} and \eqref{graphsecond}. 
 As stated in Remark~\ref{rmkweakclose}, to prove the well-poseness of the least-squares formulation \eqref{DOTminilsq}, it suffices to verify that for a sequence $\{(u_n, \m{p}_n,\sigma_n,\mu_n)\}_{n=1}^\infty$ bounded in $H^1(\Omega)\times H(\text{div},\Omega)\times H^1(\Omega)\times H^1(\Omega)$, there exists a subsequence weakly converging to $(u,\m{p},\sigma,\mu)$, and operator $L$ defined in \eqref{opeL} satisfies $\{L(u_n, \m{p}_n,\sigma_n,\mu_n)\}_{n=1}^\infty$ weakly converges to $L(u, \m{p},\sigma,\mu)$ in $X$.
%
%

The verification of the desired property of $L$ is as follows.  Given the bounded sequence  $\{(u_n, \m{p}_n,\sigma_n,\mu_n)\}_{n=1}^\infty$, there exists a subsequence, still denoted as $\{(u_n, \m{p}_n,\sigma_n,\mu_n)\}_{n=1}^\infty$, weakly converging to $(v,q):=(u, \m{p},\sigma,\mu)$ in $H^1(\Omega)\times H(\text{div},\Omega)\times H^1(\Omega)\times H^1(\Omega)$. Denoting $(u_n,\m{p}_n)$ by $v_n$ and $(\sigma_n,\mu_n)$ by $q_n$, there holds for any $V=(V_1,V_2)\in X$ that
 \begin{equation*}
\begin{split}
&\,\quad \lim_{k\to\infty}(L(v_n, q_n), V)_X-(L(v, q), V)_X\\
&=\lim_{k\to\infty}((L_{q_n}(v_n-v), V)_X+(L(v, q_n-q), V)_X)\\
&=\lim_{k\to\infty}\big((-\nabla\cdot (\m{p}_n-\m{p})+\mu_n(u_n-u),V_1)+ ((\m{p}_n-\m{p})-\sigma_n\nabla(u_n-u),V_2)\\
&\quad +((\sigma_n-\sigma)\nabla u, V_1)+((\mu_n-\mu)u,V_2)\big)\\
&=\,0,
\end{split}
\end{equation*}
where we have used the weak convergence of $\{(u_n, \m{p}_n,\sigma_n,\mu_n)\}_{n=1}^\infty$ and H\"{o}lder's inequality.  Therefore, $\{L(u_n, \m{p}_n,\sigma_n,\mu_n)\}_{n=1}^\infty$ weakly converges to $L(u, \m{p},\sigma,\mu)$ in $X$.
On the other hand, as $C$ is the trace operator on $\partial \Omega$ in DOT problem, it is linear and thus weakly sequentially closed, which completes our verification.

\end{subsection}

\end{section}

\begin{section}{Numerical experiments}\label{sec:Num}

In this section, we carry out some numerical experiments on the DOT problem in different scenarios to illustrate the efficiency 
and robustness of the proposed two-stage algorithm in this work.
Throughout these examples, we shall assume that we apply the Neumann boundary data $h$ on $\partial \Omega$ and measure the corresponding Dirichlet data $f$ to reconstruct the diffusion coefficient $\sigma$ and the absorption coefficient $\mu$ simultaneously. The basic algorithm involves two stages: 
we apply the direct sampling method (DSM) in the first stage to get some initial approximations of the two unknown coefficients, and then adopt the total least-squares method to achieve more accurate reconstructions of the coefficients. 

\begin{subsection}{Direct sampling method for initialization}
For all the numerical experiments, we shall use the DSM in the first stage of our algorithm, in an attempt 
to effectively locate the multiple inclusions inside the computational domain with limited measurement data. 
Here we give a brief description of DSM and refer the readers to \cite{chow2020direct} for more technical details 
about this DSM that can identify multiple coefficients.  The DSM develops two separate families of probing functions, 
i.e., the monopole and dipole probing functions, for constructing separate index functions for multiple physical coefficients. 
The inhomogeneities of coefficients can be approximated based on index functions  due to the following two observations: 
the difference of scattered fields caused by inclusions can be approximated by the sum of Green's functions of the homogeneous medium and their gradients; and the two sets of probing functions  have the mutually almost orthogonality property, i.e., they only interact closely with the Green's functions and their gradients respectively. Thus we can decouple the  monopole and dipole effects and derive index functions for separate physical properties.

In practice, if the value of an index function $\phi$ for one physical coefficient at a sampling point $x$ is close to $1$,  
the sampling point is likely to stay in the support of inhomogeneity; whereas if $\phi(x)$ is close to $0$, the sampling point $x$ probably stays outside of the support. Hence the index functions  give an image of the approximate support of the inhomogeneity, and we can determine a subdomain $D$ to locate the support from index functions. The subdomain $D$  could be chosen as $D = \{x \in \Omega:  \phi(x) \geq \theta\}$ with $\theta$ being a suitable cut-off value, then we restrict the index function $\phi$ on $D$.  We adopt this choice in the numerical experiments to remove the spurious oscillations in the homogeneous background. Once we have the restricted index function $\phi|_D$, we set the value of the approximation $\tilde{\phi}$ in $D$ as $\tilde{\phi}|_D = c_\phi \phi|_D$, where the
constant $c_\phi$ is some priori estimate of the true coefficient, and set the value of $\tilde{\phi}$ outside of $D$ as the background coefficient.
In this way, we obtain $\tilde{\phi}$  as the initial guess for the total least-squares method for our further 
reconstruction in the second stage.
\end{subsection}

\begin{subsection}{Examples}
We present numerical results for some two-dimensional examples and
showcase the proposed two-stage least-squares method for inverse medium problems using both exact and noisy data. 
Here the objective functional $J$ in \eqref{inv-sigmamu} is discretized
by a staggered finite difference scheme \cite{virieux1984sh,robertsson1994viscoelastic}. 
The  computational domain $\Omega= [0,1]\times[0,1]$ is divided into a uniform mesh consisting of small squares of width $h=0.01$. The noisy measurements $u^\delta$ are generated point-wise by the formula
$$u^\delta(x) = u(x)+\epsilon \eta \max_{x\in\partial\Omega}|u(x)|$$
where $\epsilon$ is the relative noise level and $\eta$ follows the standard Gaussian distribution. The subdomain $D$ is chosen using the formula
\begin{equation}\label{setD}
D = \{x\in\Omega: \phi(x)\geq \theta\},
\end{equation}
where $\phi$ is the index function from DSM and the cutoff value $\theta$ is taken in the range $(0.4,0.7)$.   As there exist limited theoretical results for the choice of regularization parameters for mixed regularization that we use, the regularization parameters $(\alpha_\sigma,\beta_\sigma,\alpha_\mu,\beta_\mu)$ in the functional for reconstructions are determined in a trial-and-error manner, and we present them for different examples in Table \ref{table1}. The maximum number $K$ of alternating iterations is set at 50.

All the computations were performed in MATLAB (R2018B) on a desktop computer.
\begin{exam}\label{example1}
 We consider the discontinuous diffusion and absorption coefficients $\sigma$ and $\mu$  with one inclusion each. The inclusion of $\sigma$ is of width $0.05$ and centered at $(0.25,0.65)$ as shown in Fig.~\ref{Ex1sig.sub.1} ; 
 the inclusion of $\mu$ is of width $0.05$ and centered at $(0.35,0.3)$ as shown in Fig.~\ref{Ex1sig.sub.5}. The magnitudes of the coefficients inside the inclusions are $20$.
\end{exam}
We shall use only one set of measurements to reconstruct the diffusion and absorption coefficients.
It is shown  in Fig.~\ref{Ex1sig.sub.2} and \ref{Ex1sig.sub.6} that the index functions from the DSM separate inclusions of different physical nature well and give approximated locations, while the exact locations of small inclusions are difficult to detect. 
If we simply take the maximal points of the index functions  in Fig.~\ref{Ex1sig.sub.2} and \ref{Ex1sig.sub.6} as  locations of the reconstructed inclusions, we may not be able to identify the true locations of inhomogeneity. 
Then we set the subdomain $D$ using information from the first stage by \eqref{setD}, see Fig.~\ref{Ex1sig.sub.3}, \ref{Ex1sig.sub.7},  and set the value of approximation out of the subdomain as equal to the background coefficients. 
As in Fig.~\ref{Ex1sig.sub.4}, \ref{Ex1sig.sub.8}, this example illustrates that the least-squares formulation in the second stage works very well to improve the reconstruction and provides a much more accurate location than 
that provided by DSM. With $10 \%$ noise in the measurements, the reconstructions remain accurate as in Fig.~\ref{Ex1.3_2.main}, which shows that the two-stage algorithm gives quite stable final reconstructions with respect to the noise 
even though the DSM reconstructions become more blurry in Figs.~\ref{Ex2sig.sub.2} and \ref{Ex2sig.sub.6}.  
The regularization parameters $(\alpha_\sigma,\beta_\sigma,\alpha_\mu,\beta_\mu)$ are presented in Table \ref{table1}. 

Compared with the reconstructions derived from the DSM, the improvement of the approximations for both $\sigma$ and $\mu$ is significant: the recovered background is now mostly homogeneous, and the magnitude and size of the inhomogeneity approximate those of the true coefficients well. These results indicate clearly the significant potential of the proposed least-squares formulation with mixed regularization for inverse medium problems.

From Table \ref{table1} we obtain an insightful observation about the mixed regularization: the magnitude of parameter $\beta$ is much larger than that of $\alpha$.
We can conclude that the $L^1$ penalty plays a predominant role in improving the performance of reconstruction for such inhomogeneous coefficients, whereas the $H^1$ penalty yields a locally smooth structure.

\begin{table}\label{table1}
\centering
\begin{tabular}{c || c c || c c}
\hline
Noise level & \multicolumn{2}{c}{$\epsilon=0$} & \multicolumn{2}{c}{$\epsilon > 0$} \\
\hline
Example & $(\alpha_\sigma, \beta_\sigma)$ & $(\alpha_\mu, \beta_\mu)$ & $(\alpha_\sigma, \beta_\sigma)$ & $(\alpha_\mu, \beta_\mu)$ \\
\hline\hline
1 & (1.0e-2, 2.0e-2 ) & (5.0e-4, 5.0e-4) & (1.0e-2, 2.0e-2) & (5.0e-4, 1.0e-3) \\
2.1 & (1.0e-3, 5.0e-3) & N/A & (1.0e-3, 1.0e-2) & N/A \\
2.2 & (1.0e-6, 1.0e-3) & N/A & (1.0e-6, 2.0e-3) & N/A \\
3 & (1.0e-3, 1.0e-2) & (1.0e-2, 5.0e-3) & (1.0e-3, 2.0e-2) & (1.0e-2, 5.0e-3) \\
4 & (1.0e-5, 5.0e-4) & N/A & (1.0e-5, 1.0e-3) & N/A \\
\end{tabular}
\caption{The regularization parameters $(\alpha,\beta)$ in each example for $\sigma$ and $\mu$ without noise and with noise. }\label{table1}
\end{table}

\begin{figure}[ht!]
\centering
\subfigure[ true $\sigma$]{
\label{Ex1sig.sub.1}
\includegraphics[width=0.23\textwidth]{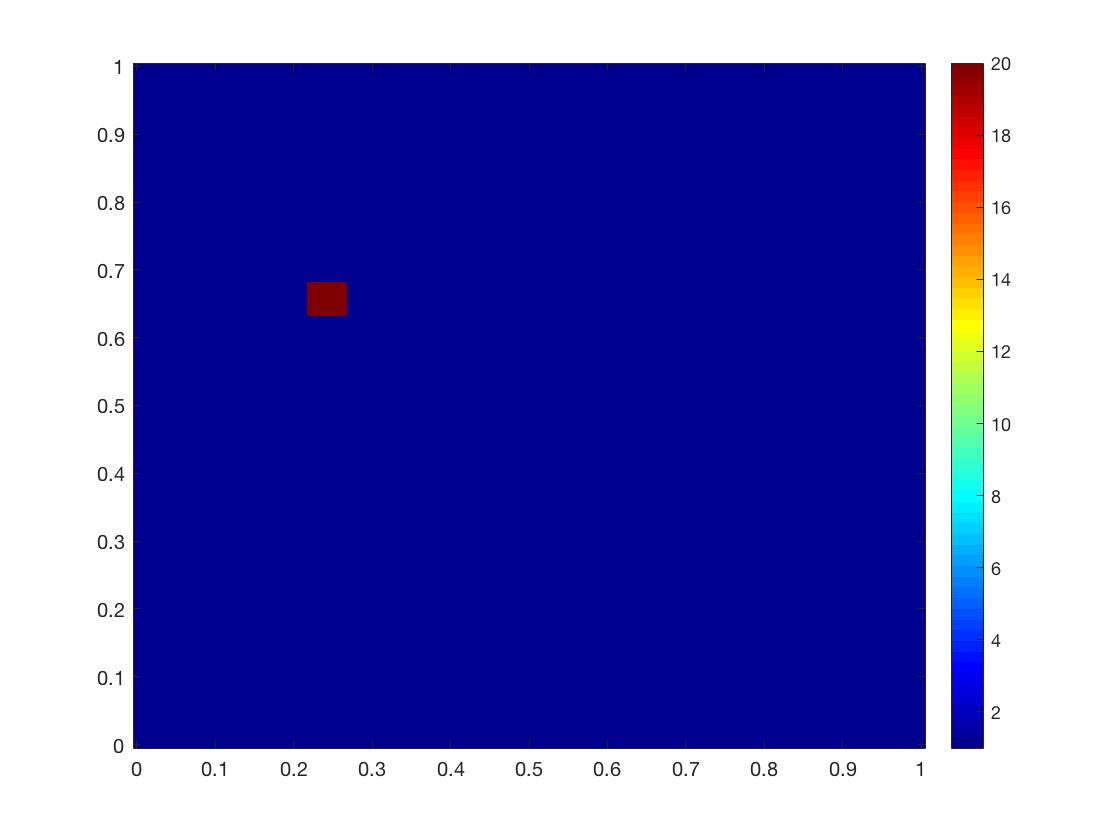}}
\subfigure[ index $\Phi$]{
\label{Ex1sig.sub.2}
\includegraphics[width=0.23\textwidth]{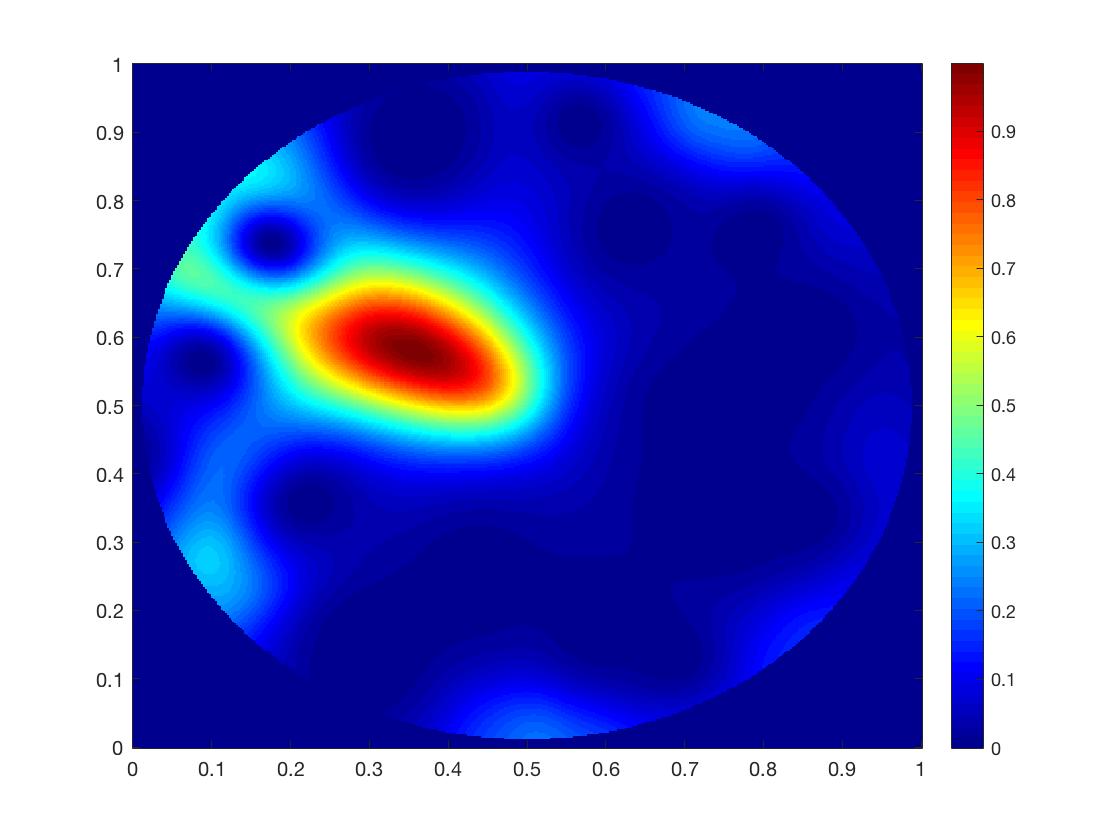}}
\subfigure[index $\Phi|_D$ ]{
\label{Ex1sig.sub.3}
\includegraphics[width=0.23\textwidth]{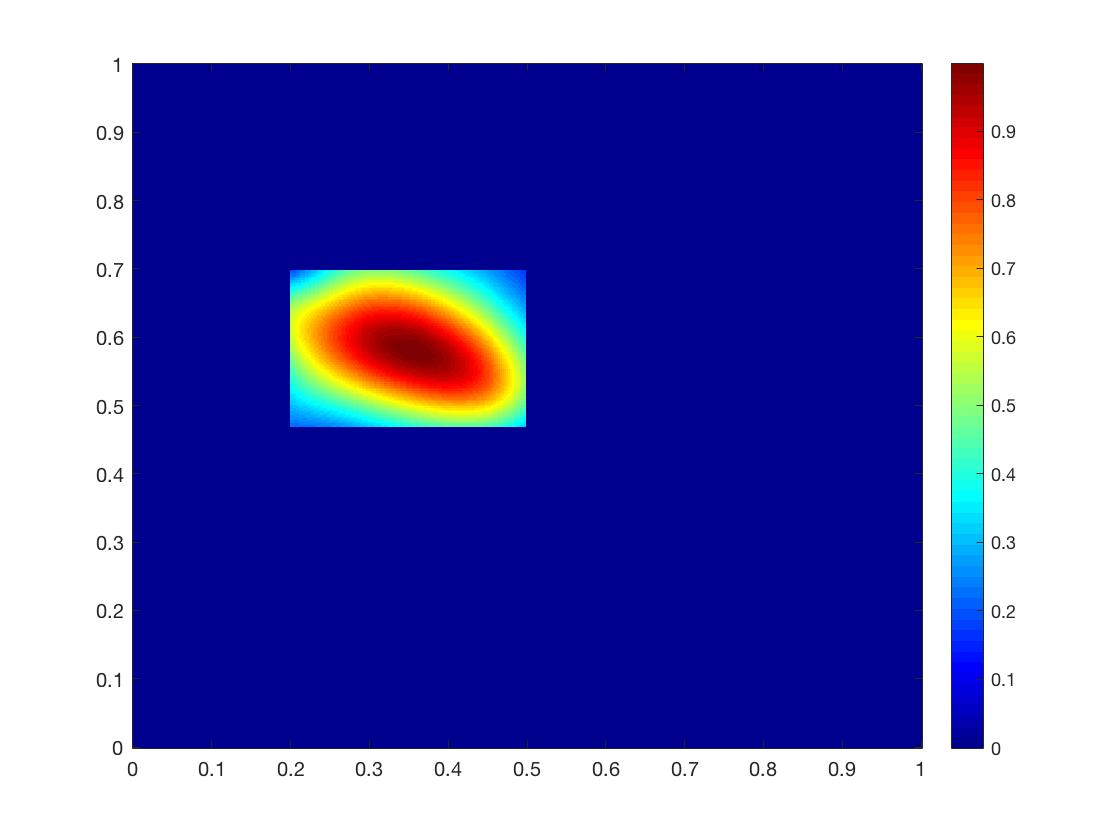}}
\subfigure[least-squares]{
\label{Ex1sig.sub.4}
\includegraphics[width=0.23\textwidth]{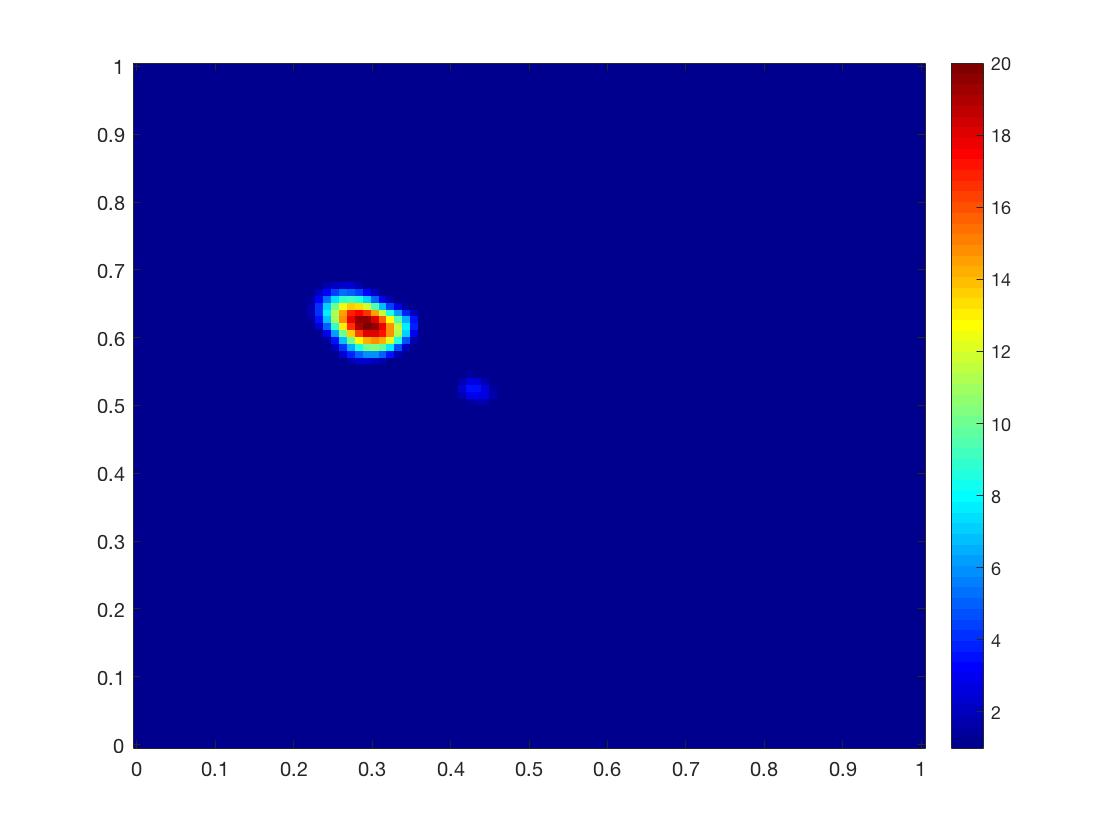}}
\subfigure[ true $\mu$]{
\label{Ex1sig.sub.5}
\includegraphics[width=0.23\textwidth]{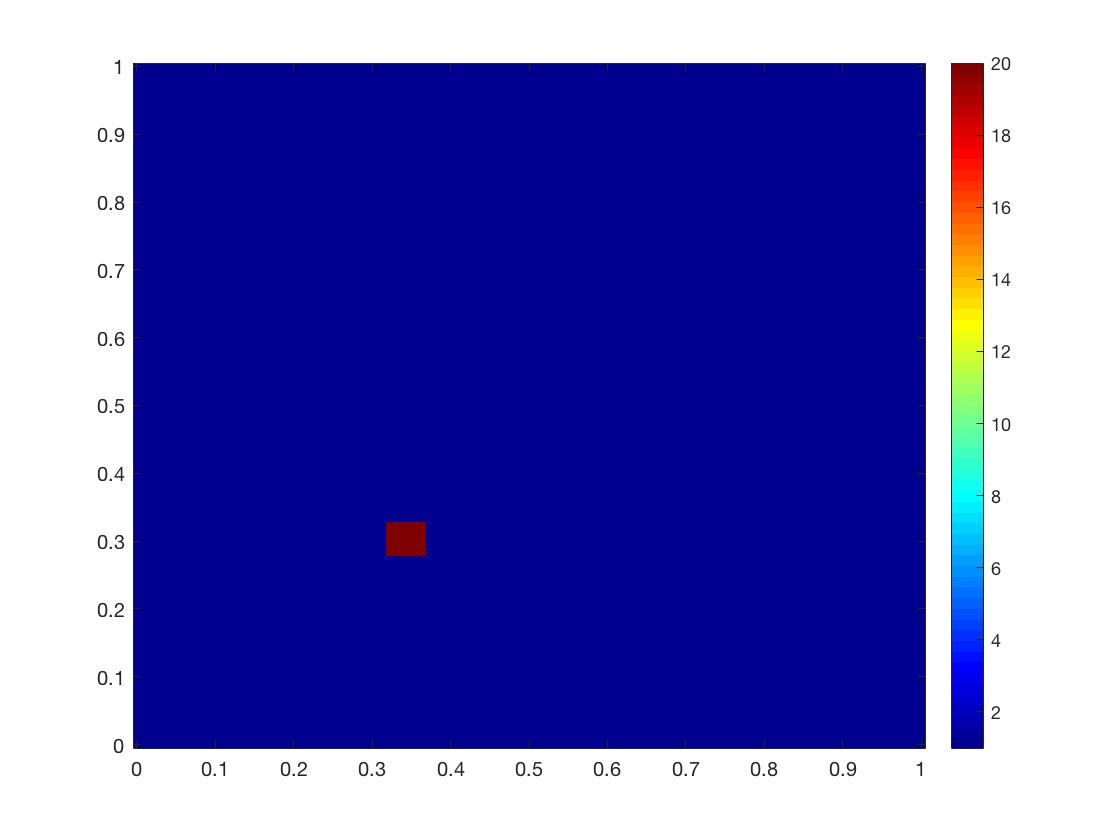}}
\subfigure[ index $\Phi$]{
\label{Ex1sig.sub.6}
\includegraphics[width=0.23\textwidth]{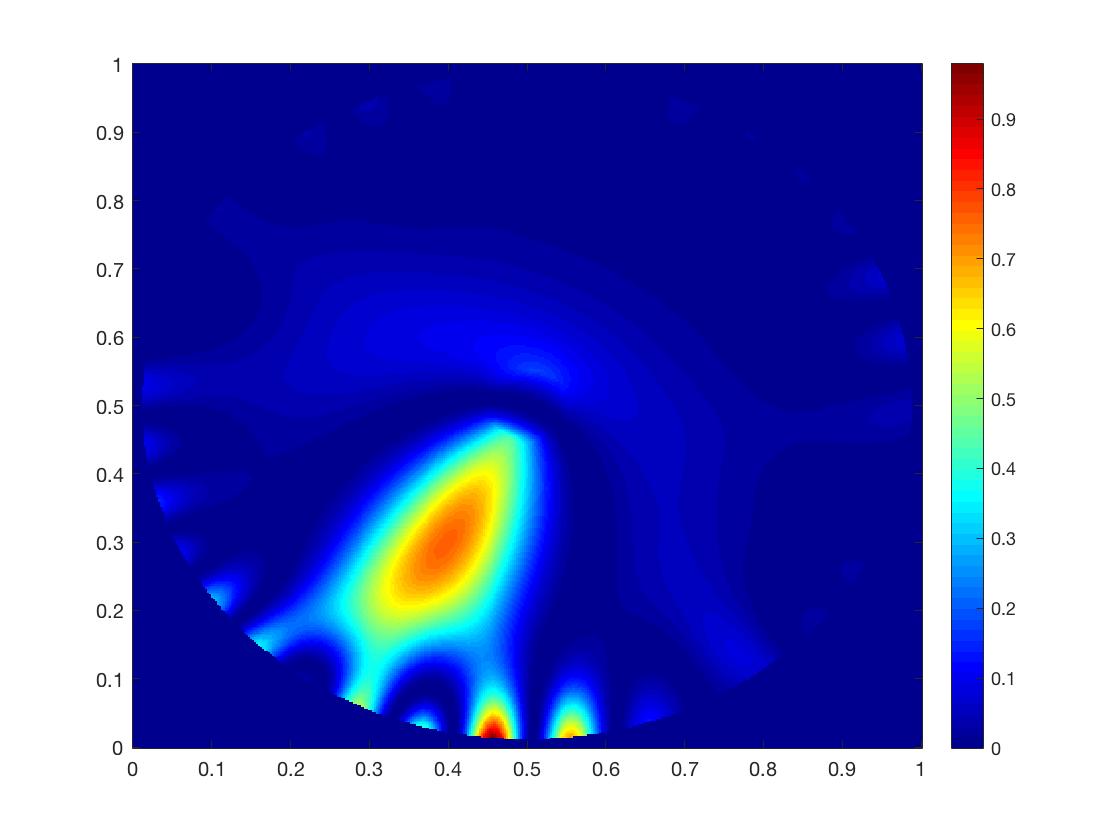}}
\subfigure[ index $\Phi|_D$]{
\label{Ex1sig.sub.7}
\includegraphics[width=0.23\textwidth]{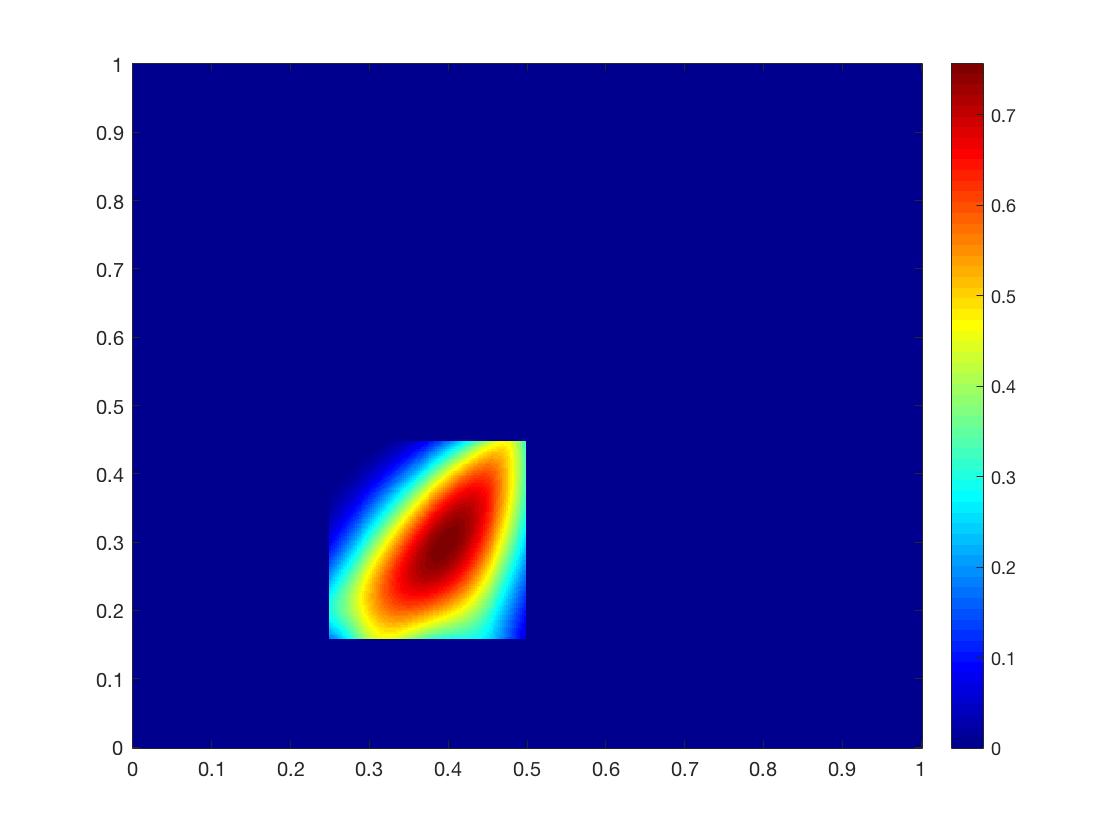}}
\subfigure[ least-squares]{
\label{Ex1sig.sub.8}
\includegraphics[width=0.23\textwidth]{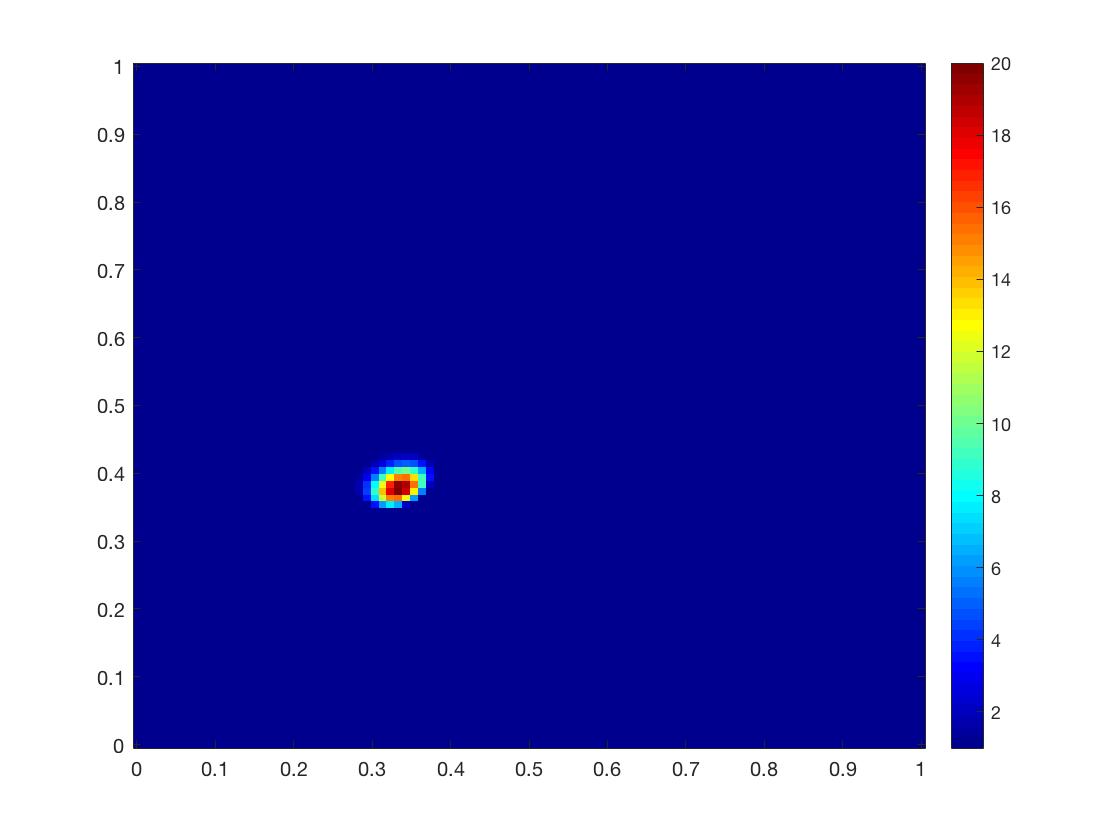}}
\caption{Numerical results for Example \ref{example1}: (a) true $\sigma$, (b) index $\Phi$ from DSM, (c) index $\Phi|_D$ (index function constrained to the chosen subdomain $D$), (d) least-squares reconstruction. (e), (f), (g), (h) are corresponding graphs for $\mu$.  These two rows are derived using exact data. }
\label{Ex123.main}
\end{figure}

\begin{figure}[ht!]
\centering
\subfigure[ true $\sigma$]{
\label{Ex2sig.sub.1}
\includegraphics[width=0.23\textwidth]{figure/oneeachsigmatrue.jpg}}
\subfigure[ index $\Phi$]{
\label{Ex2sig.sub.2}
\includegraphics[width=0.23\textwidth]{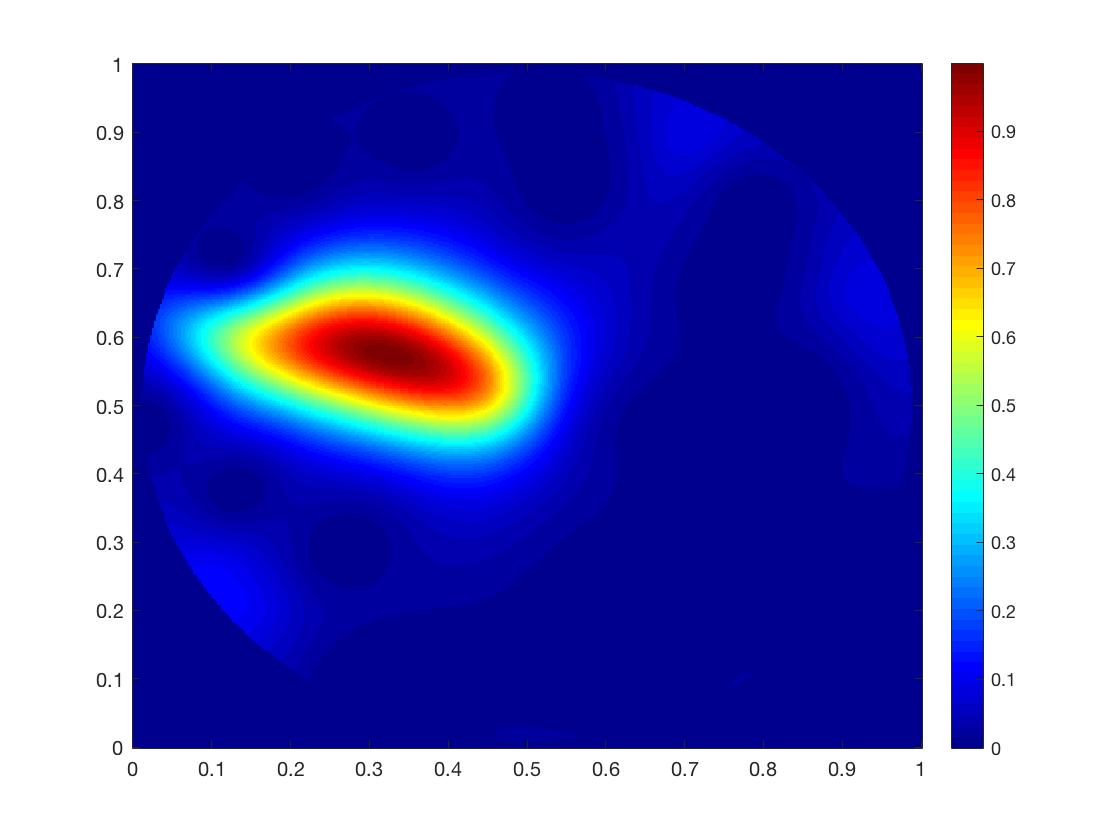}}
\subfigure[ index $\Phi|_D$ ]{
\label{Ex2sig.sub.3}
\includegraphics[width=0.23\textwidth]{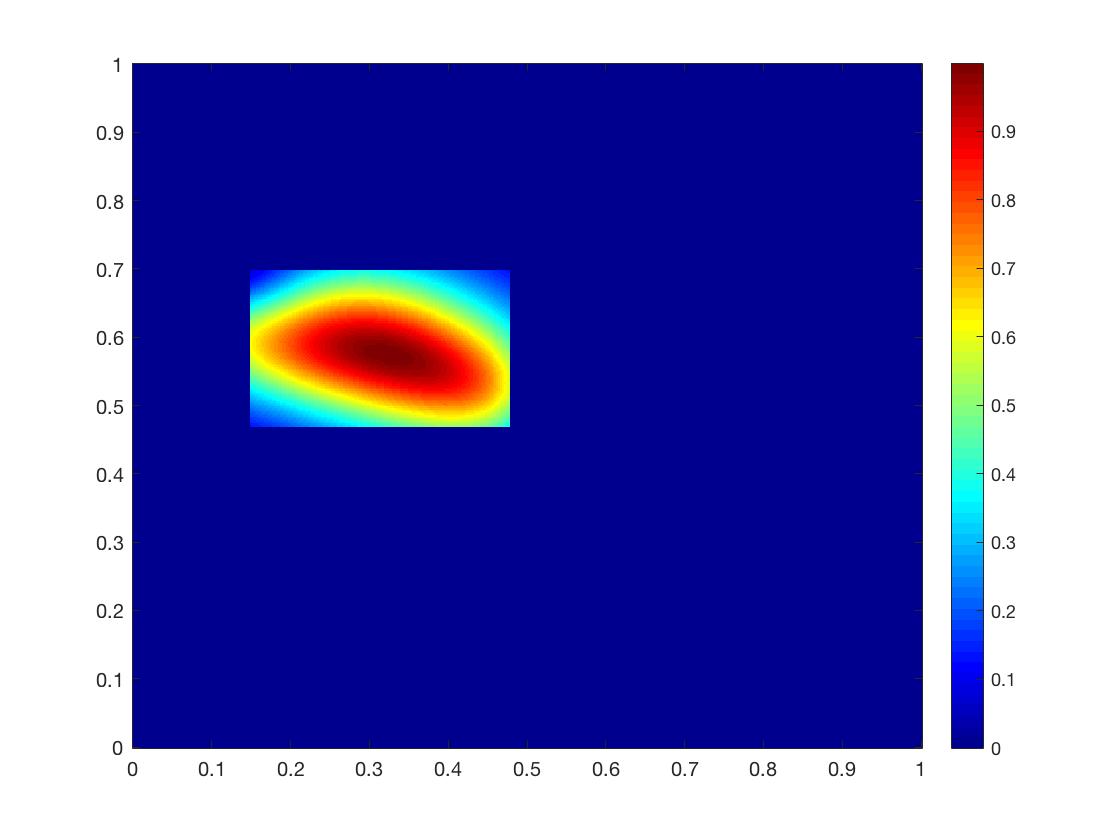}}
\subfigure[least-squares]{
\label{Ex2sig.sub.4}
\includegraphics[width=0.23\textwidth]{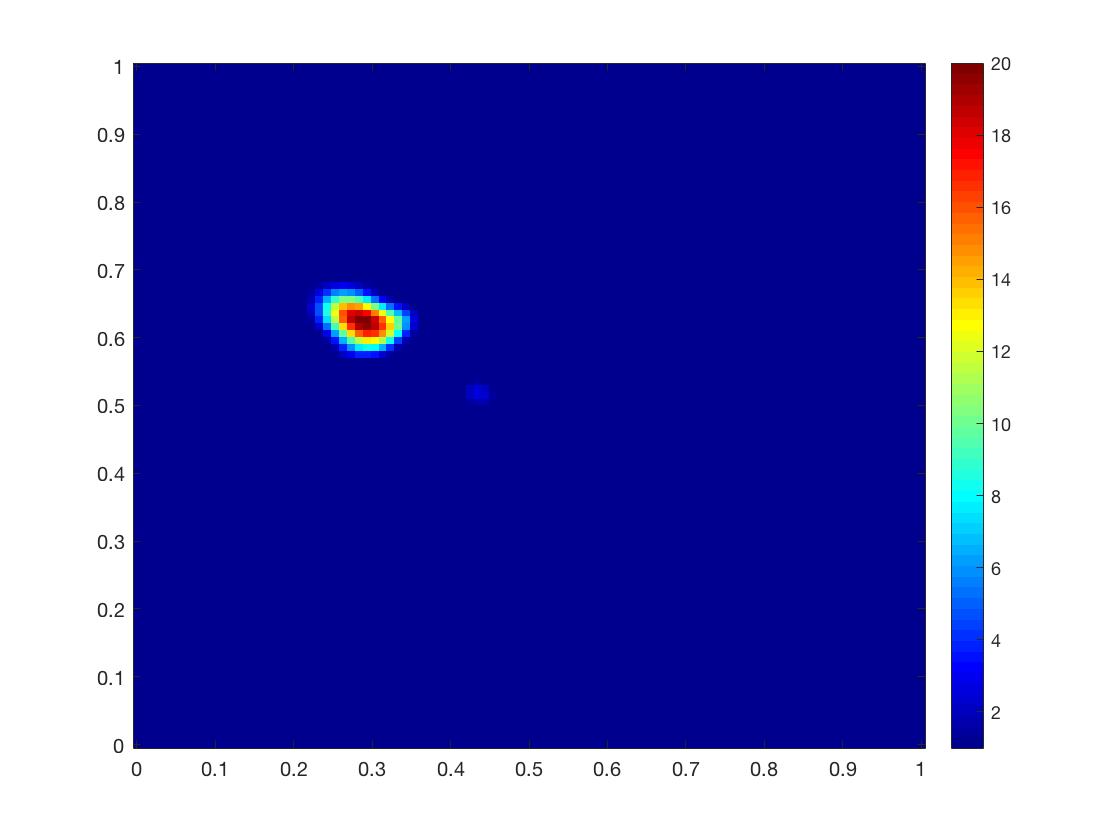}}
\subfigure[ true $\mu$]{
\label{Ex2sig.sub.5}
\includegraphics[width=0.23\textwidth]{figure/oneeachmutrue.jpg}}
\subfigure[index $\Phi$]{
\label{Ex2sig.sub.6}
\includegraphics[width=0.23\textwidth]{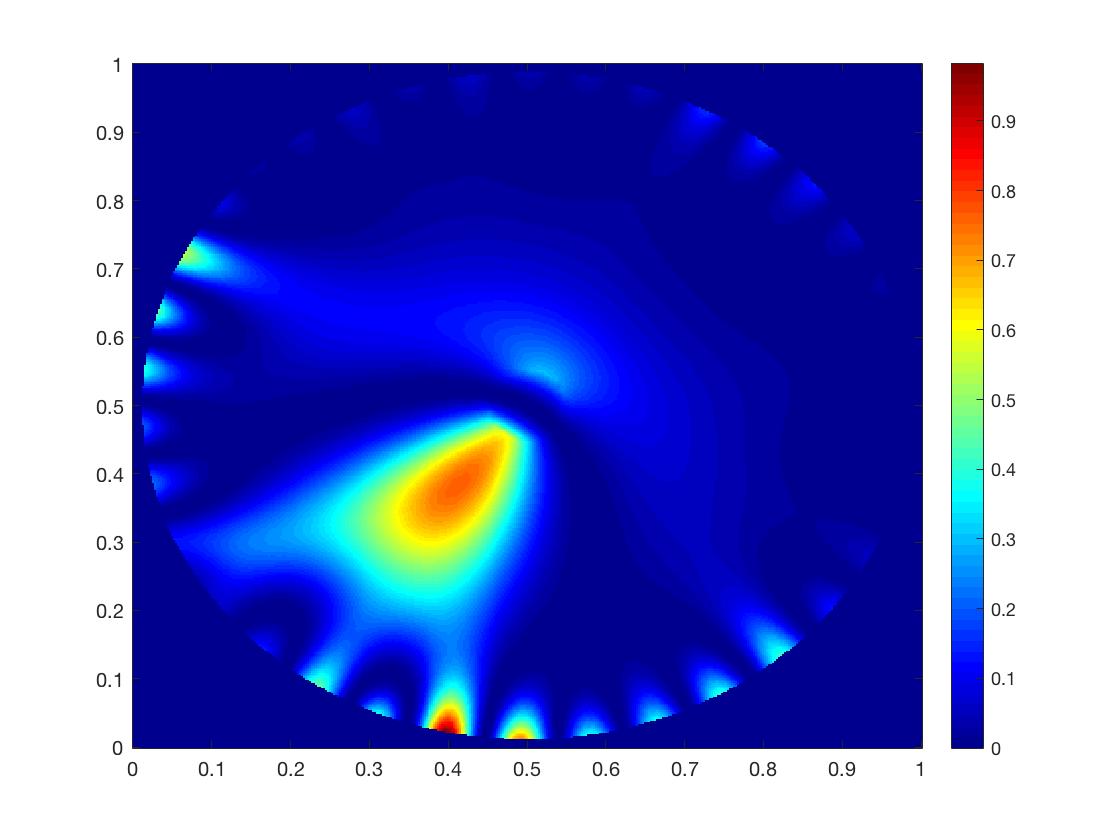}}
\subfigure[ index $\Phi|_D$]{
\label{Ex2sig.sub.7}
\includegraphics[width=0.23\textwidth]{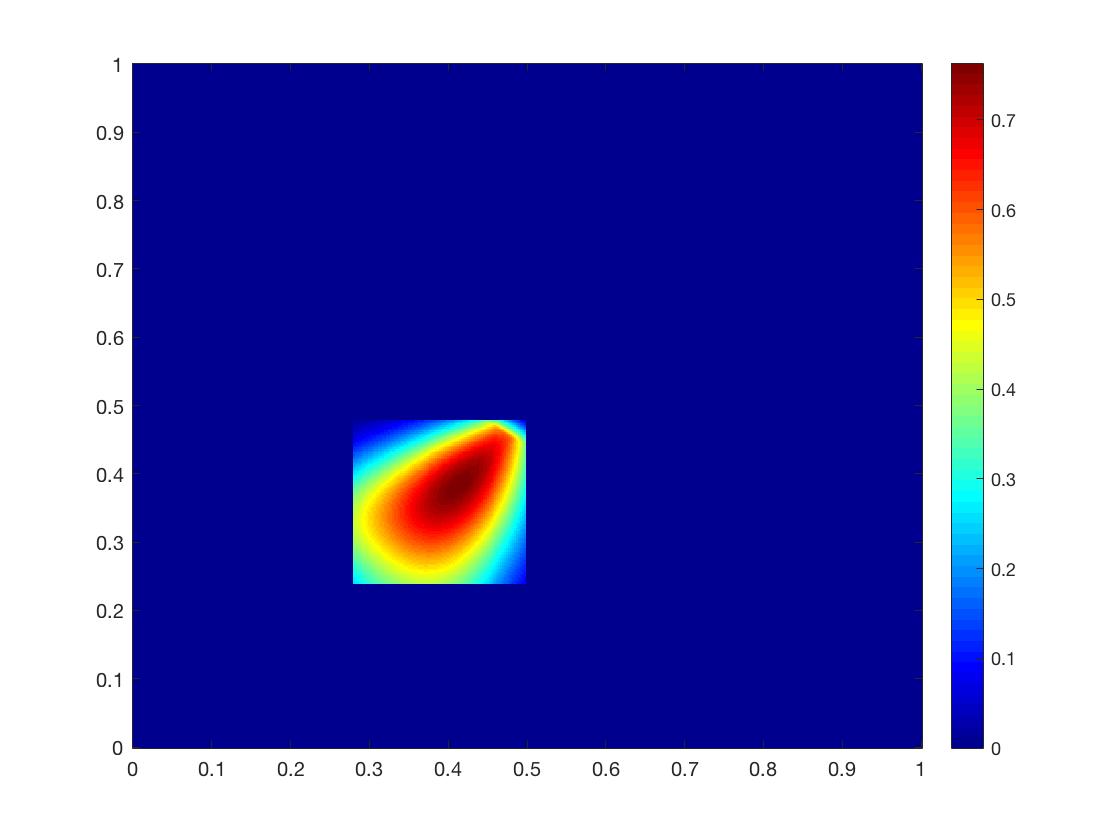}}
\subfigure[ least-squares]{
\label{Ex2sig.sub.8}
\includegraphics[width=0.23\textwidth]{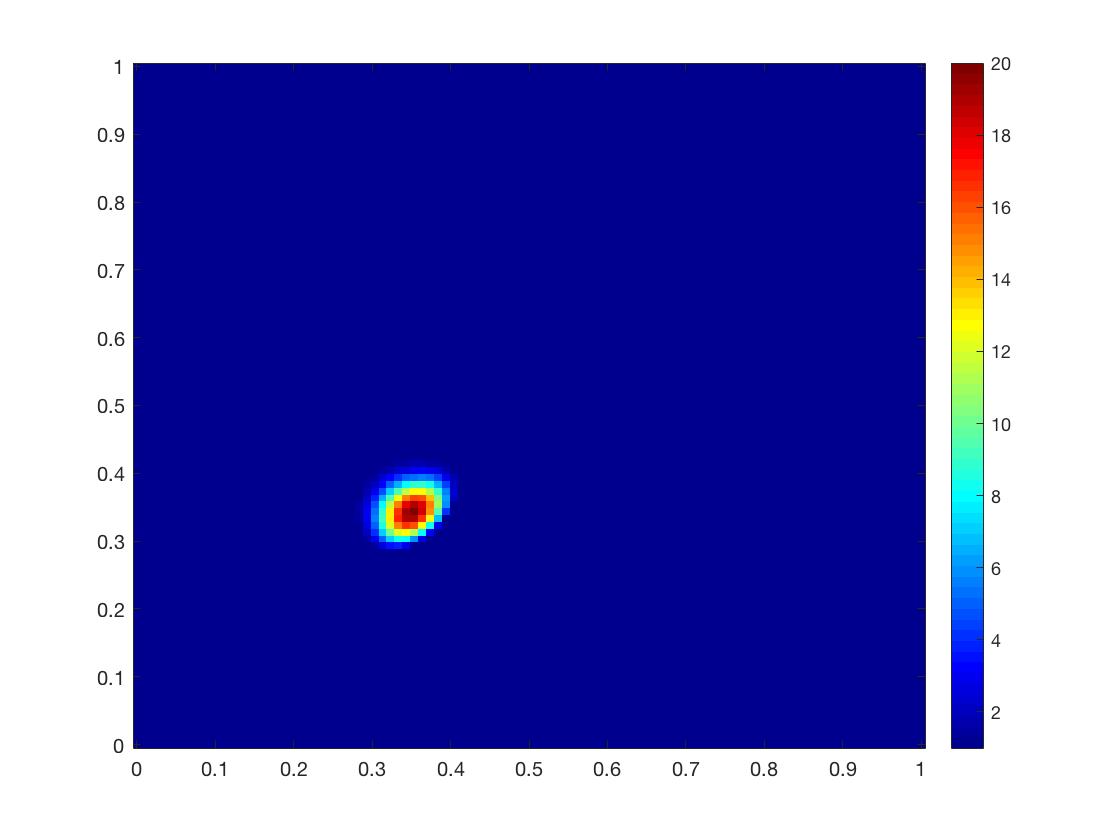}}
\caption{Numerical results for Example \ref{example1} with noise: (a) true $\sigma$, (b) index $\Phi$ from  DSM, (c) index $\Phi|_D$ (index function constrained to the chosen subdomain $D$), (d) least-squares reconstruction. (e), (f), (g), (h) are corresponding graphs for $\mu$.  These two rows are derived using data with $10\%$ noise.}
\label{Ex1.3_2.main}
\end{figure}
\begin{exam}\label{example2.1}
 We implement the algorithm to reconstruct the discontinuous diffusion coefficient $\sigma$ with two inclusions. We assume that $\mu$ is known and equal to the background coefficient $\mu_0$ in this example. One set of data is measured to locate two inclusions of $\sigma$, which are centered at $(0.15,0.5)$ and $(0.5,0.85)$ respectively and of width $0.05$.  $\sigma$ is taken to be $20$ inside the regions as shown in Fig.~\ref{Ex3sig.sub.1}. 
  \end{exam}
  The reconstructions for $\sigma$ using exact measurements and measurements with $20\%$ noise are shown in Fig.~\ref{Ex1.2_1.main}. In the first stage, the DSM gives the index function that separates these two inclusions well using only one set of data as in Fig.~\ref{Ex3sig.sub.2}, while some inhomogeneity is observed in the background. This phenomenon comes from the ill-posedness of inverse medium problems and also the oscillation of fundamental solutions used in the DSM. Even so, the approximations in Fig.~\ref{Ex3sig.sub.2} and \ref{Ex3sig.sub.6} still provide the basic modes of the inhomogeneity. We can identify that there are two inclusions, and capture the subdomain $D$ for the second stage in Figs.~\ref{Ex3sig.sub.3} and \ref{Ex3sig.sub.7}. The least-squares formulation with mixed regularization significantly improves the reconstruction: the locations of both inclusions are captured better with clear background and accurate size in Figs.~\ref{Ex3sig.sub.4}, \ref{Ex3sig.sub.8}. Comparing with Figs.~\ref{Ex3sig.sub.4} and \ref{Ex3sig.sub.8}, one can observe that the reconstruction deteriorates only slightly in that the left inclusion shrinks a little bit when the noise level $\epsilon$ increases from 0 to $20\%$. This example  verifies that the two-stage algorithm is very robust with respect to the data noise.

\begin{figure}[ht!]
\centering
\subfigure[ true $\sigma$]{
\label{Ex3sig.sub.1}
\includegraphics[width=0.23\textwidth]{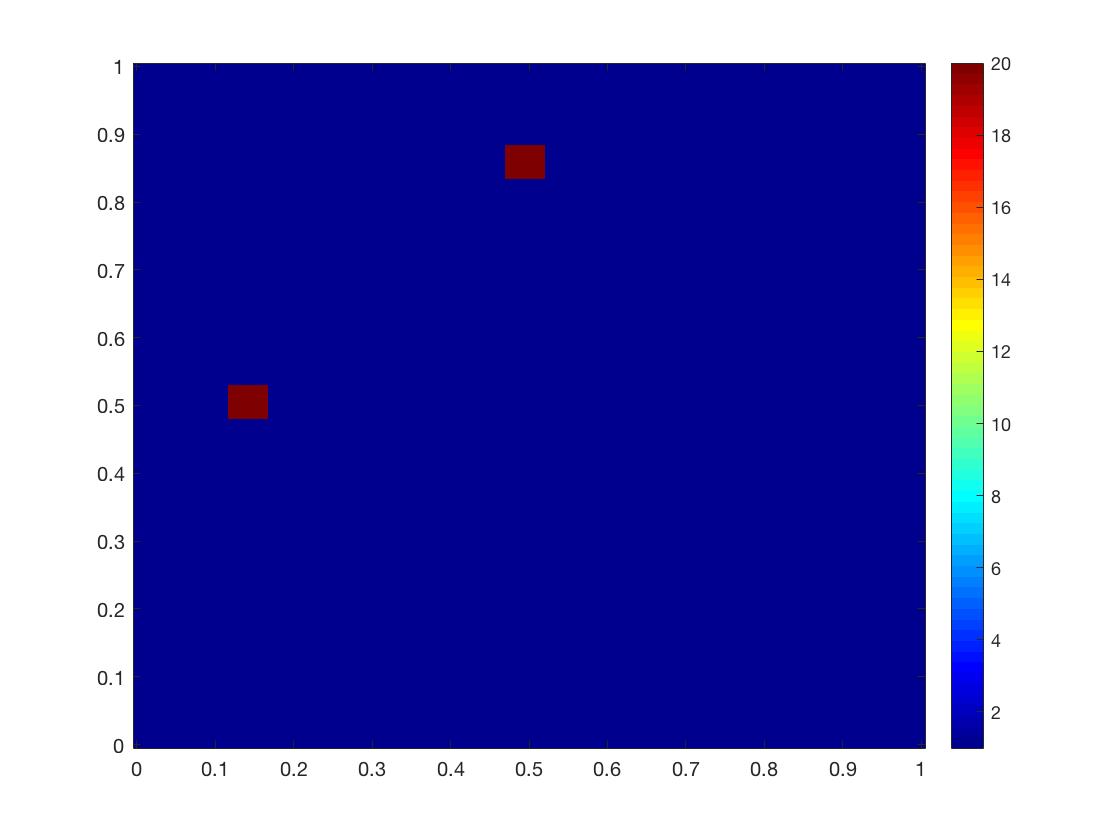}}
\subfigure[ index $\Phi$]{
\label{Ex3sig.sub.2}
\includegraphics[width=0.23\textwidth]{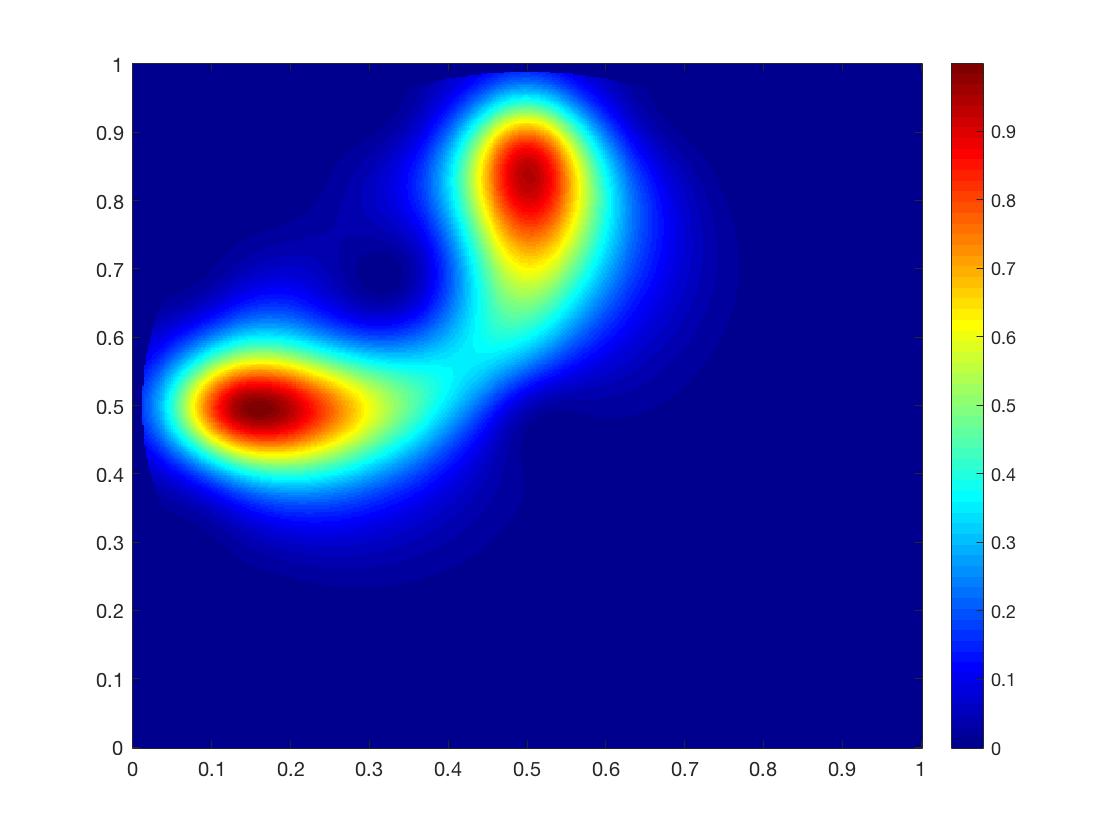}}
\subfigure[ index $\Phi|_D$ ]{
\label{Ex3sig.sub.3}
\includegraphics[width=0.23\textwidth]{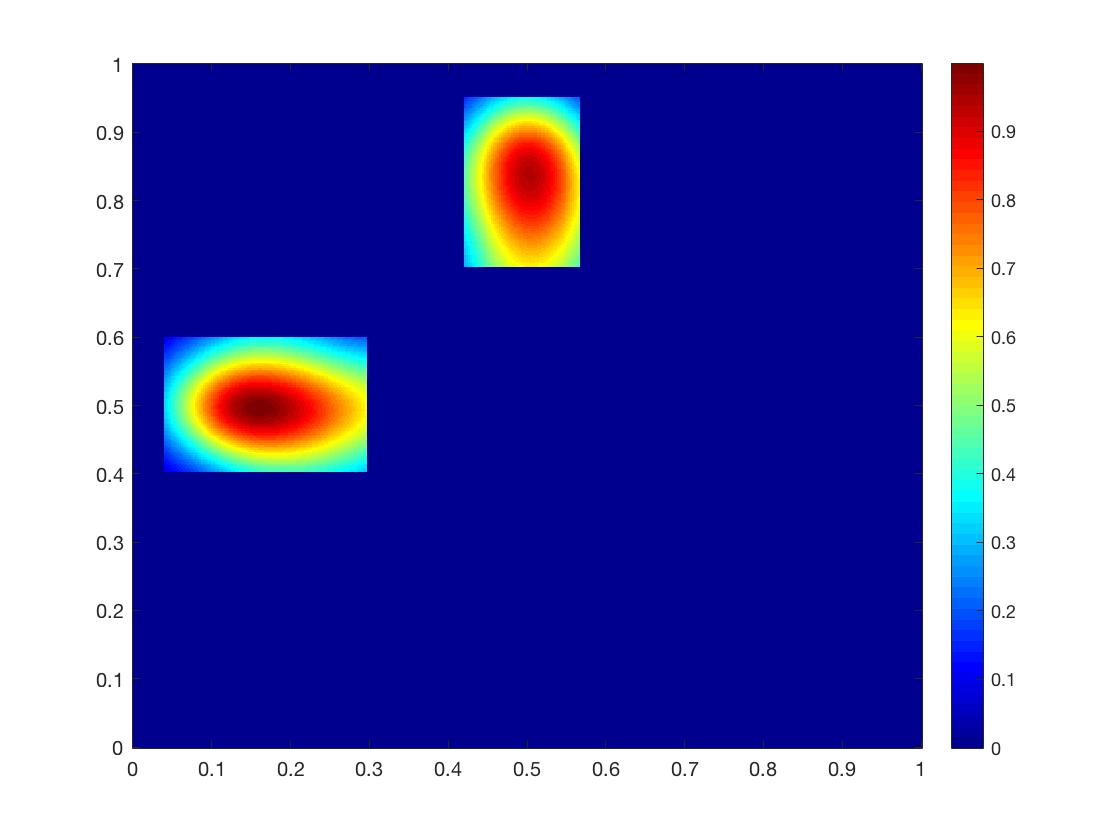}}
\subfigure[least-squares]{
\label{Ex3sig.sub.4}
\includegraphics[width=0.23\textwidth]{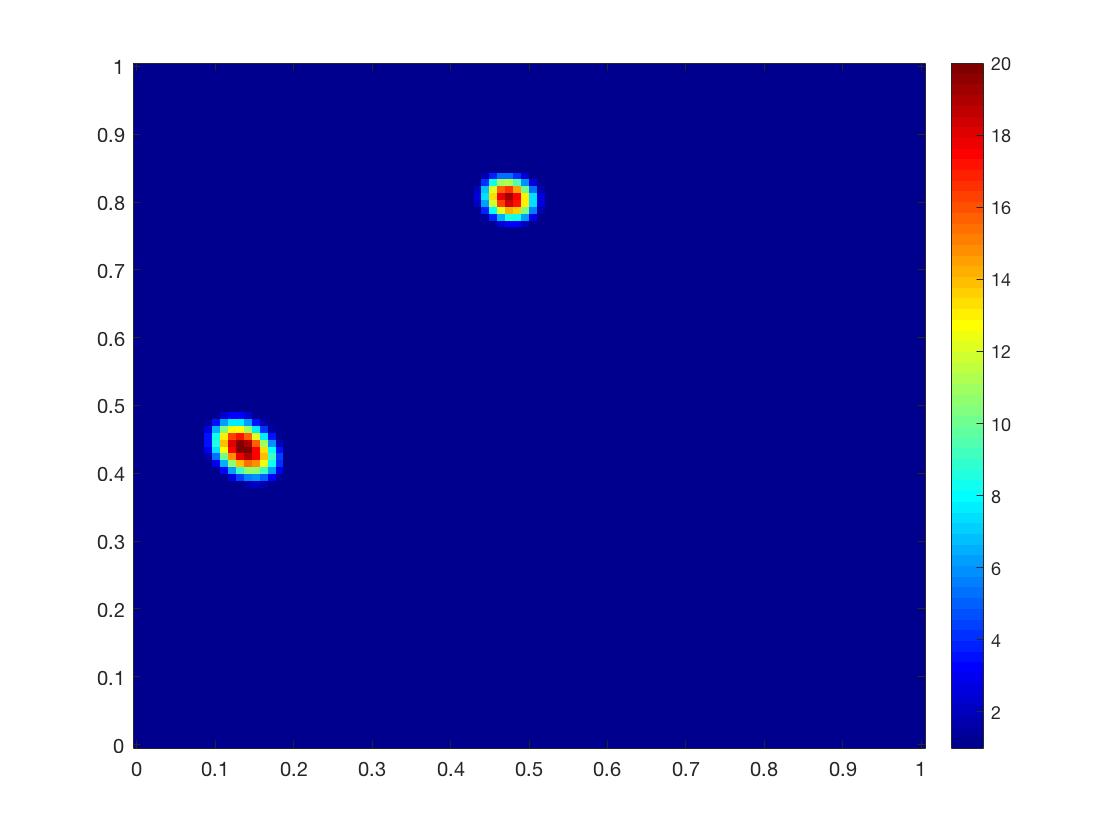}}
\subfigure[ true $\sigma$]{
\label{Ex3sig.sub.5}
\includegraphics[width=0.23\textwidth]{figure/sigmatwoinclusiontrue.jpg}}
\subfigure[index $\Phi$]{
\label{Ex3sig.sub.6}
\includegraphics[width=0.23\textwidth]{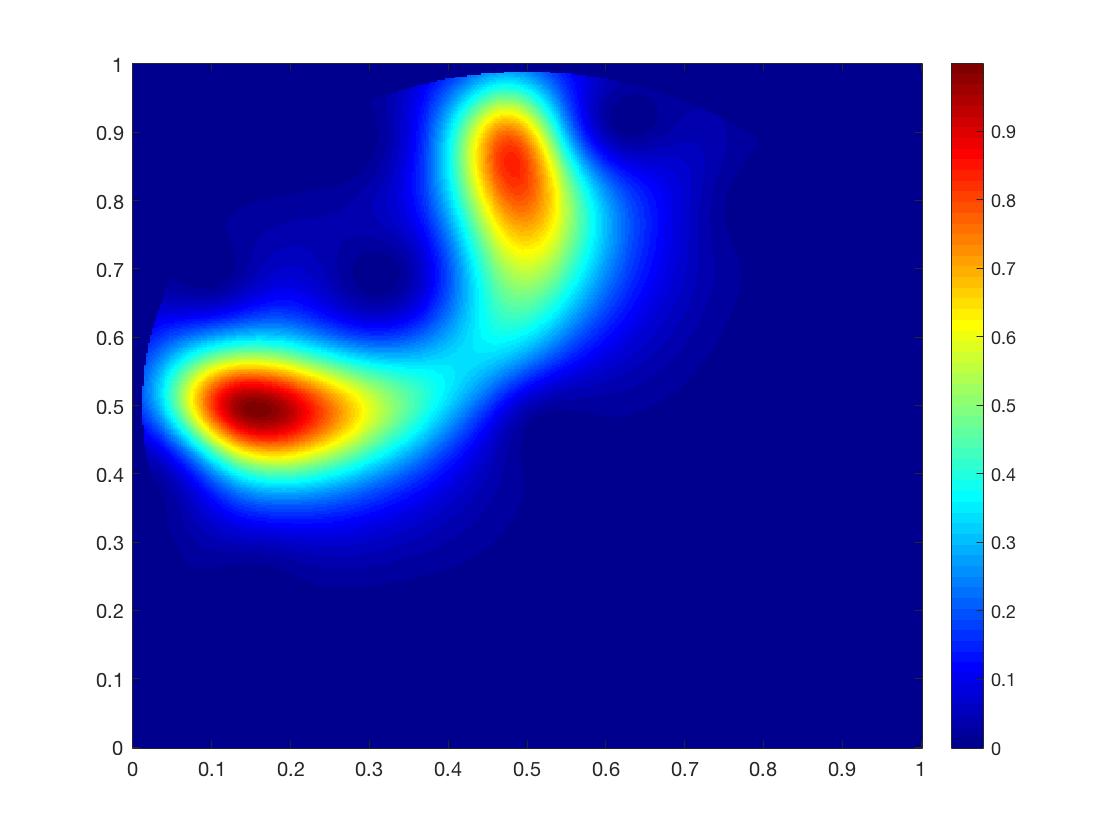}}
\subfigure[ index $\Phi|_D$]{
\label{Ex3sig.sub.7}
\includegraphics[width=0.23\textwidth]{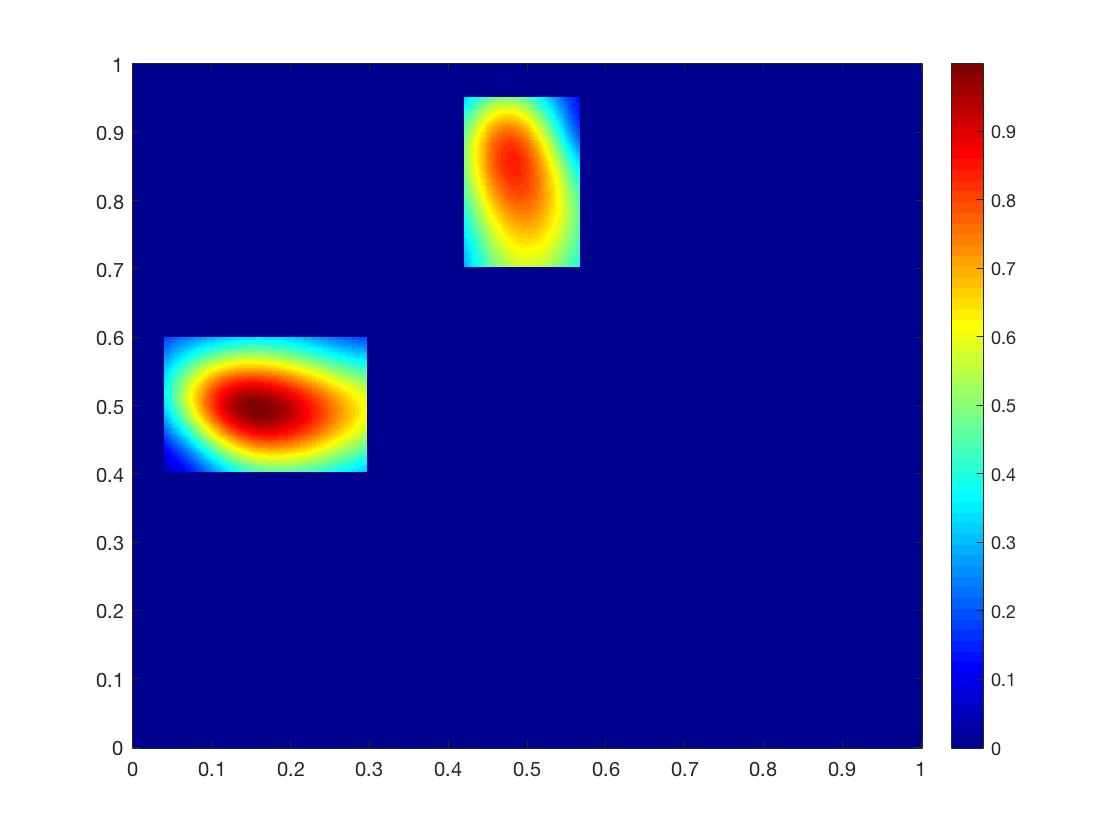}}
\subfigure[ least-squares]{
\label{Ex3sig.sub.8}
\includegraphics[width=0.23\textwidth]{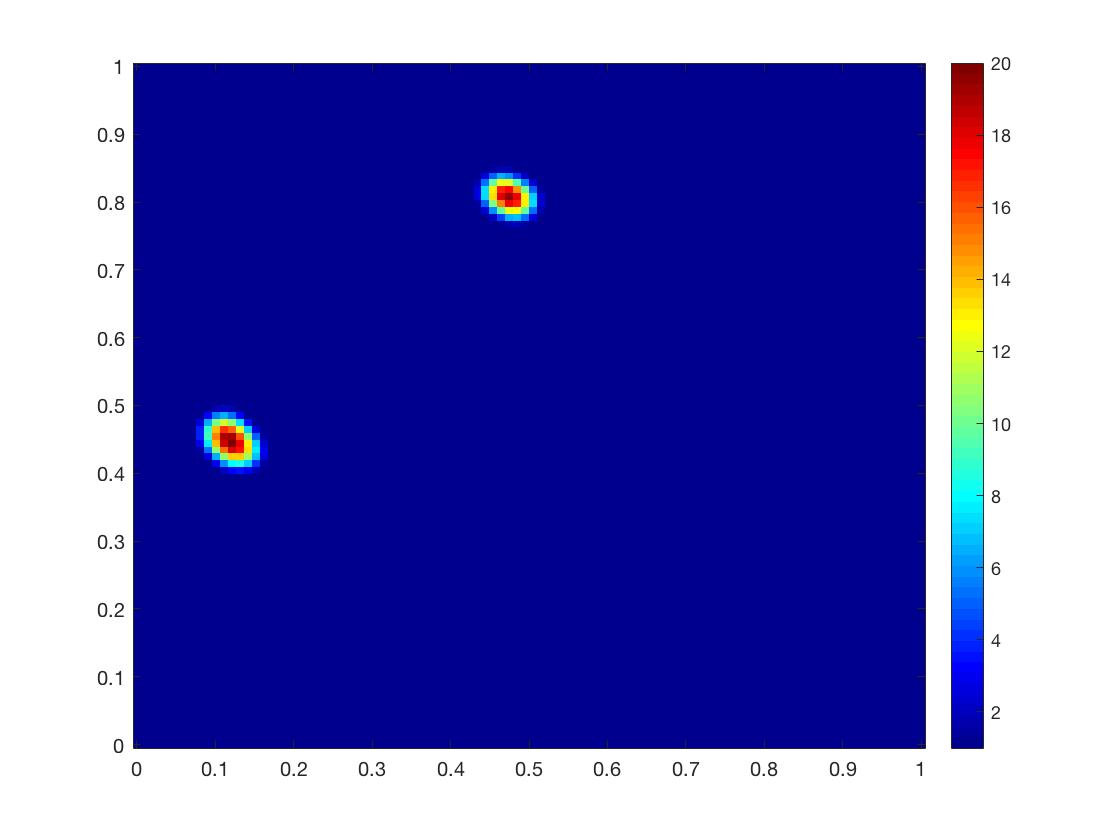}}
\caption{Numerical results for Example \ref{example2.1}: (a) true $\sigma$,  (b) index $\Phi$ from DSM, (c)index $\Phi|_D$ (index function constrained to the chosen subdomain $D$), (d) least-squares reconstruction. (e), (f), (g), (h) are corresponding graphs using data with $20\%$  noise. }
\label{Ex1.2_1.main}
\end{figure}

\begin{exam}\label{example2.2} 
In this example we reconstruct two inclusions of  diffusion coefficient $\sigma$ that stay very close to each other. 
The two inclusions of diffusion coefficient $\sigma$ are centered at $(0.45,0.425)$ and $(0.55,0.575)$ respectively and of width $0.1$ as shown in Fig.~\ref{Ex4sig.sub.1}. The coefficient $\sigma$ is $20$ in both regions.
\end{exam}

The two inclusions in Example \ref{example2.1} are relatively far from each other, while in Example \ref{example2.2} we consider the case when two inclusions are quite close to each other, which is more challenging as it would be difficult to distinguish these two separated inclusions and reconstruct their locations and magnitudes precisely. As shown in Fig.~\ref{Ex4sig.sub.2}, the index function from DSM presents limited information on the diffusion coefficient with only one set of data, and shows one connected inclusion. With $2\%$ noise, the index function is blurred a lot as shown in Fig.~\ref{Ex4sig.sub.6}. In both noisy and noiseless cases, only one subdomain can be detected from this index function from the first stage. 
The second stage still presents well separated reconstructions with the size and magnitude that match the exact diffusion coefficient well as shown in Fig.~\ref{Ex4sig.sub.4}. For the case with $2\%$ noise, this two-stage algorithm also gives satisfying reconstruction  in Fig.~\ref{Ex4sig.sub.8}. Compared with Fig.~\ref{Ex4sig.sub.4},  it is observable that the left inclusion moves towards x-axis and elongates a little bit, while we can still tell the sizes and locations of two inclusions from the reconstruction. 
This shows that the least-squares method provides much more details than DSM and is relatively robust with respect the the noise in the measurement. 

Note that for the noise level higher than $2\%$, the DSM can not provide a feasible initial guess for the least-squares method in the second stage with only one set of data, but with an initial guess that reflects the mode of the true coefficient, the least-squares method in the second stage has great tolerance of noise as shown in other examples.

\begin{figure}[ht!]
\centering
\subfigure[ true $\sigma$]{
\label{Ex4sig.sub.1}
\includegraphics[width=0.23\textwidth]{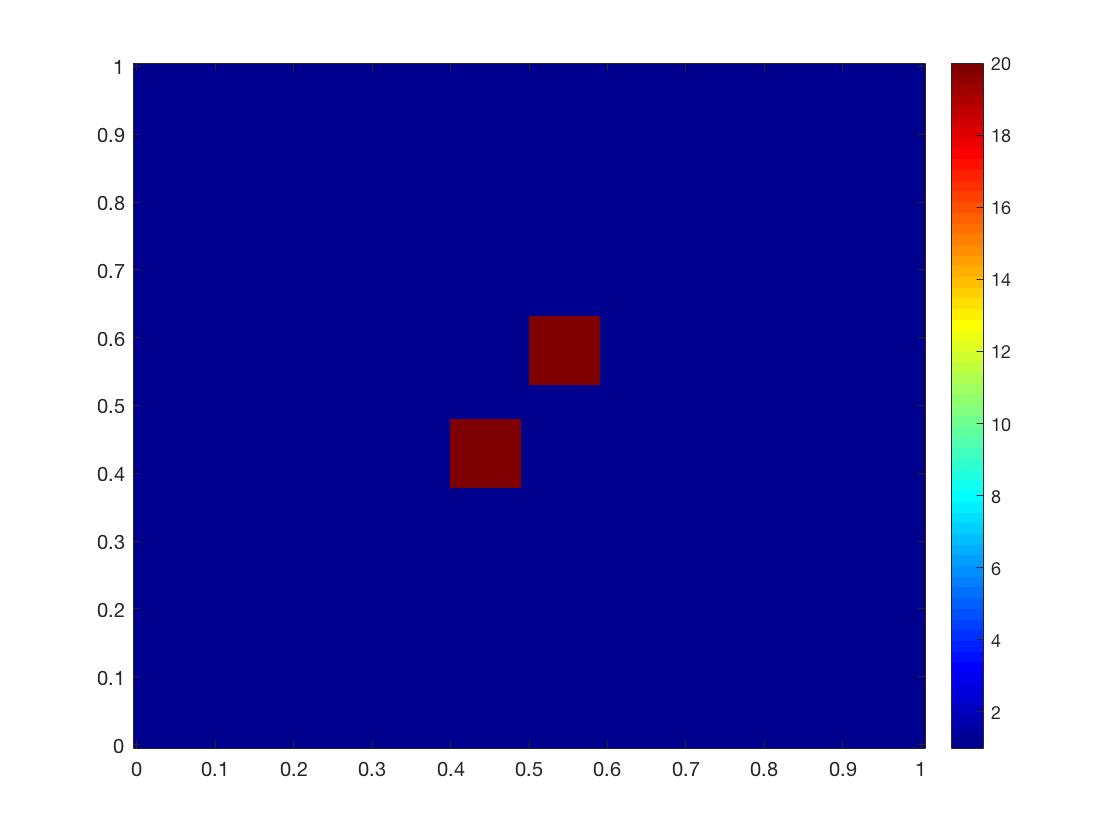}}
\subfigure[  index function $\Phi$]{
\label{Ex4sig.sub.2}
\includegraphics[width=0.23\textwidth]{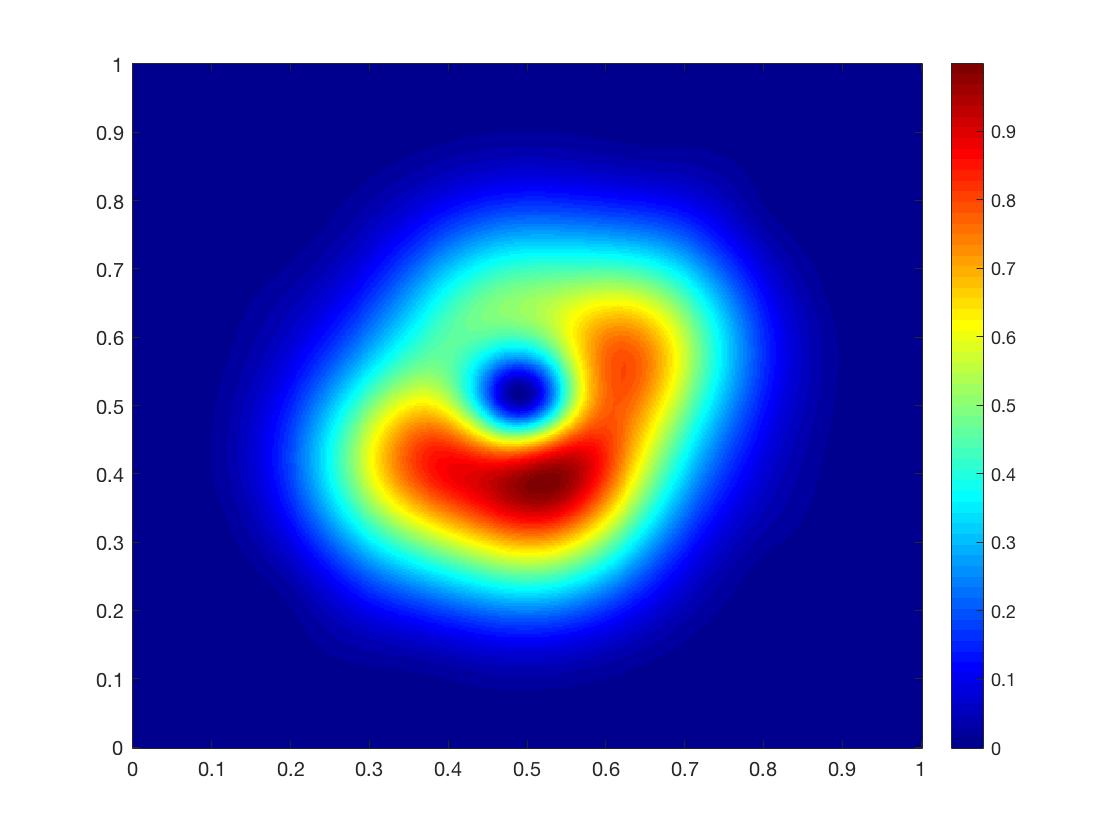}}
\subfigure[ index function $\Phi|_D$]{
\label{Ex4sig.sub.3}
\includegraphics[width=0.23\textwidth]{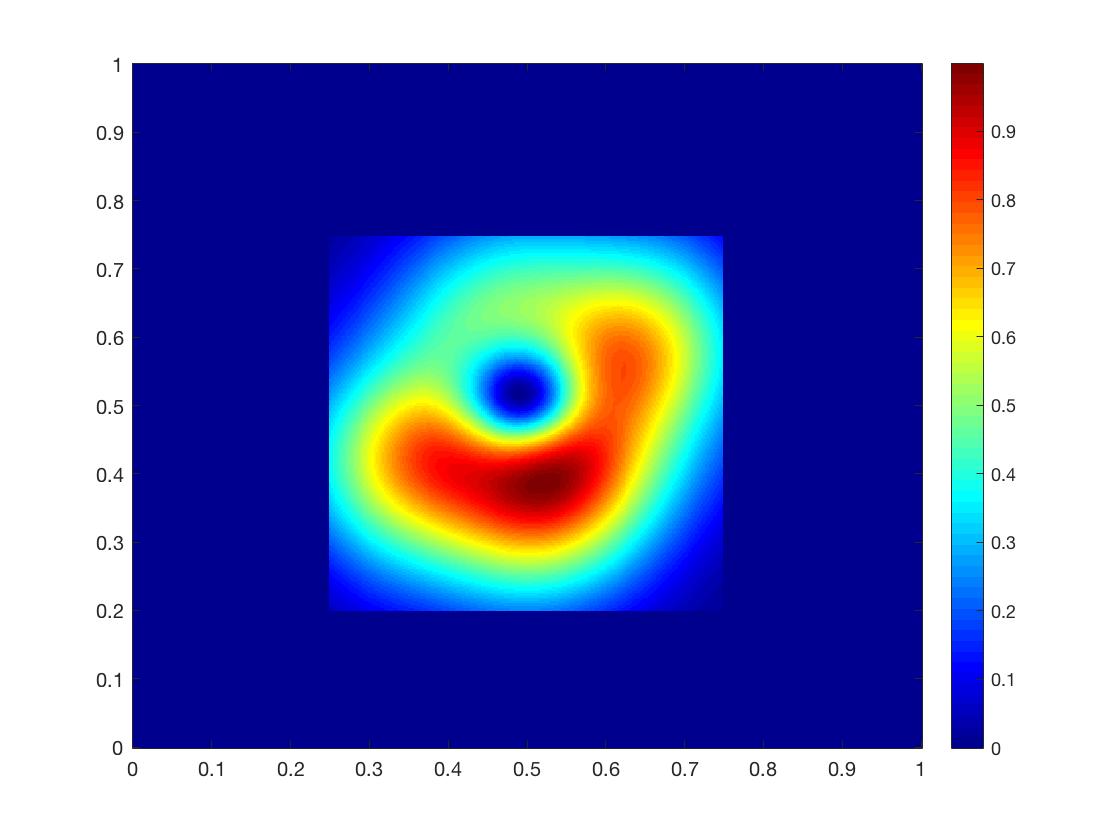}}
\subfigure[ least-squares]{
\label{Ex4sig.sub.4}
\includegraphics[width=0.23\textwidth]{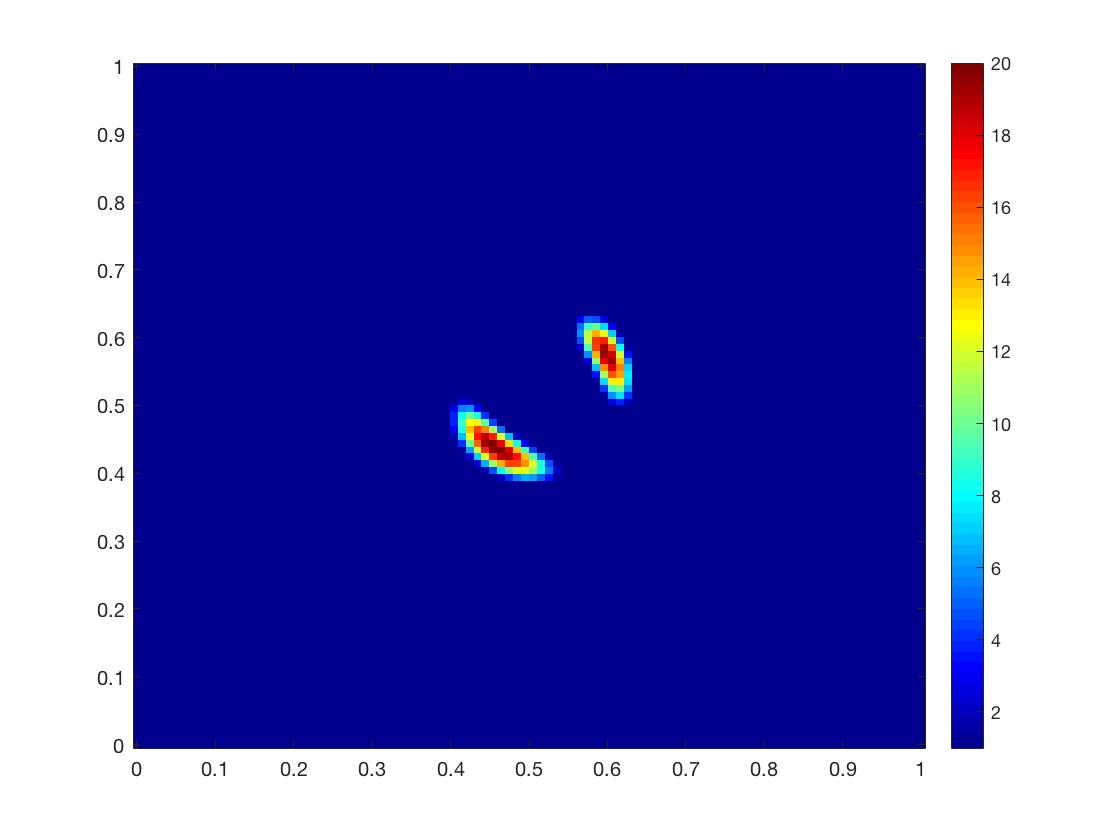}}
\subfigure[ true $\sigma$]{
\label{Ex4sig.sub.5}
\includegraphics[width=0.23\textwidth]{figure/sigmatrueclose.jpg}}
\subfigure[  index function $\Phi$]{
\label{Ex4sig.sub.6}
\includegraphics[width=0.23\textwidth]{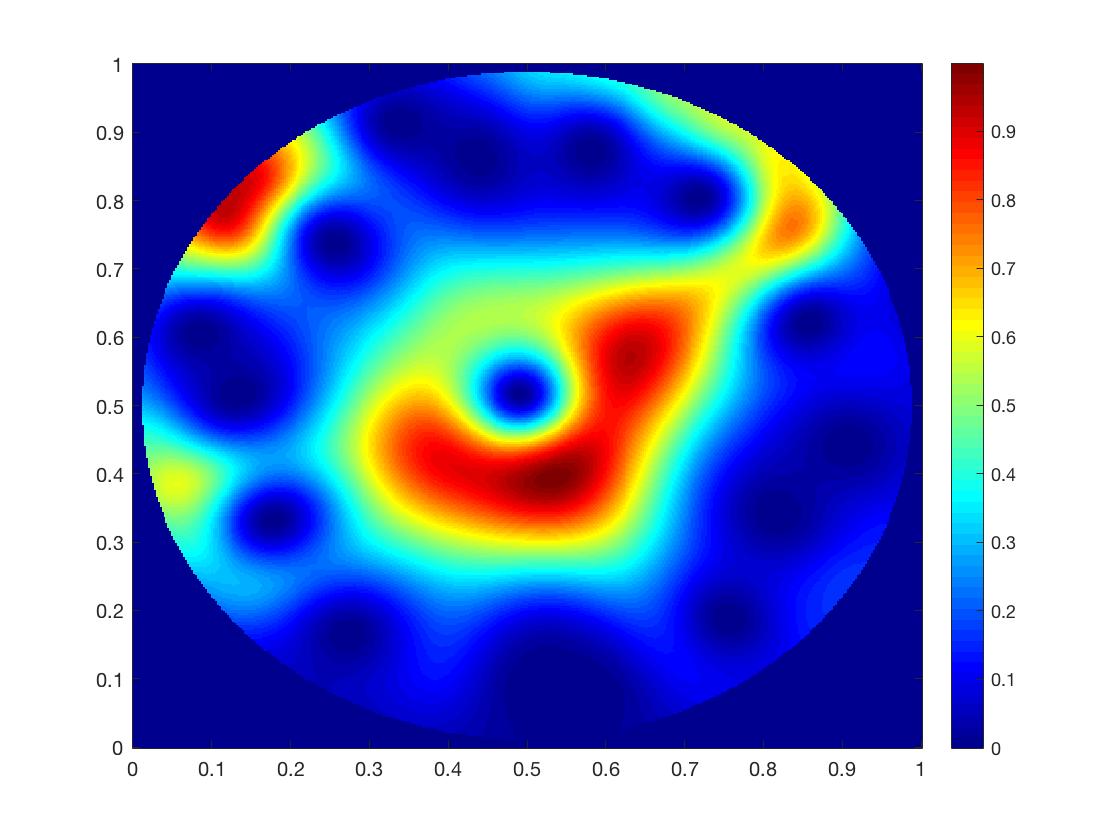}}
\subfigure[ index function $\Phi|_D$]{
\label{Ex4sig.sub.7}
\includegraphics[width=0.23\textwidth]{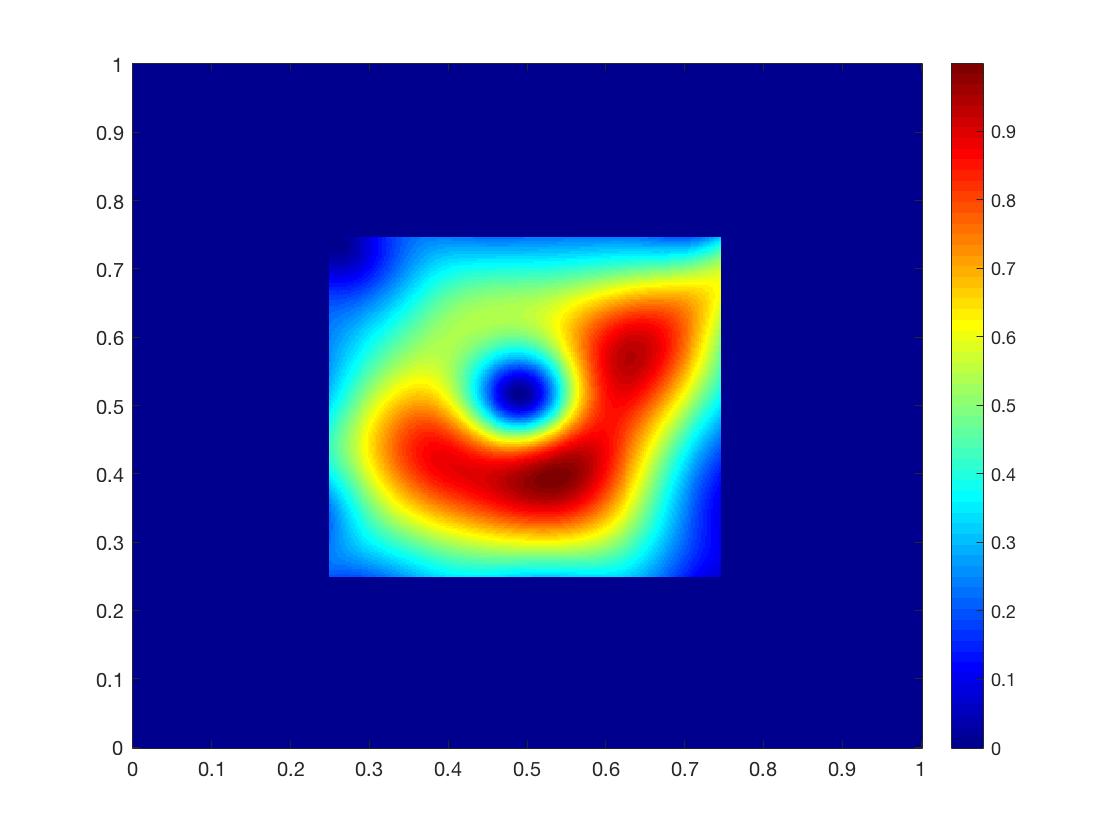}}
\subfigure[ least-squares]{
\label{Ex4sig.sub.8}
\includegraphics[width=0.23\textwidth]{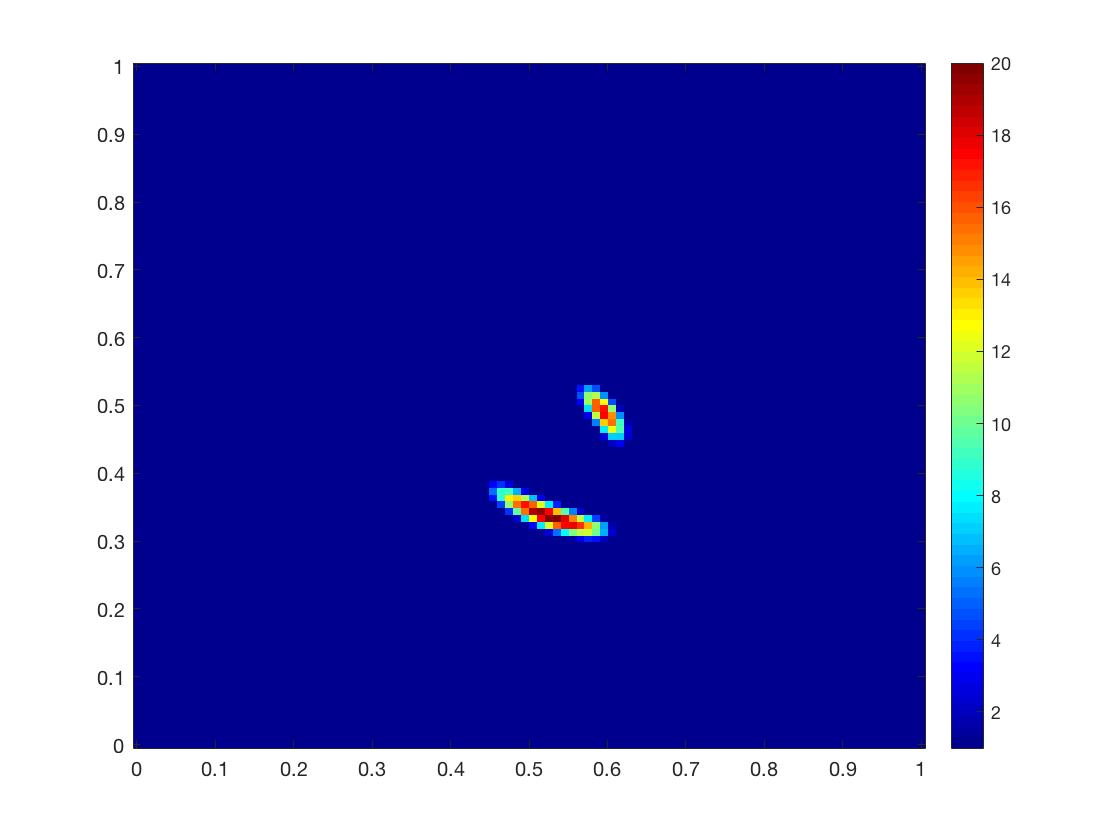}}
\caption{Numerical results for Example \ref{example2.2}: (a) true $\sigma$, (b) index $\Phi$ from DSM, (c) index $\Phi|_D$ (index function constrained to the chosen subdomain $D$), (d) least-squares reconstruction. These experiments are derived using exact data. (e), (f), (g), (h) are corresponding graphs using data with $2\%$ noise}
\label{Ex2.2}
\end{figure}

\begin{exam}\label{example3} 
With this example, we reconstruct $\sigma$ and $\mu$ simultaneously, with two inclusions for each coefficient. The inclusions are in the following scenario: The inclusions of diffusion coefficient $\sigma$ are of width $0.1$ and centered at $(0.5,0.25)$, $(0.5,0.75)$ respectively, and the magnitude inside the region is $20$; The inclusions of absorption coefficient $\mu$ are of width $0.1$ and centered at $(0.25,0.5)$, $(0.75,0.5)$ respectively, and the magnitude inside the region is $20$, as shown in Figs.~\ref{Ex5sig.sub.1} and \ref{Ex5sig.sub.5}.
\end{exam}
 In Example \ref{example1}, we have one inclusion for coefficients $\mu$ and $\sigma$ each, and it is shown that this algorithm can reconstruct both medium coefficients well. Example \ref{example3} is more challenging than Example \ref{example1}, as the existence of two inclusions for each coefficient will influence the reconstruction of the other coefficient. As shown in Figs.~\ref{Ex5sig.sub.2} and \ref{Ex5sig.sub.6}, the index functions separate these inclusions well  and give a rough approximation for their locations with only one set of data. However, it can be observed that the maximal points of index functions  actually differ from the exact coefficients, and one can not capture the information of inclusions by simply taking square of the index functions. In the second stage of the proposed algorithm, the locations are modified as shown in Figs.~\ref{Ex5sig.sub.4} and \ref{Ex5sig.sub.8}. When we consider the case with $20\%$ noise, DSM provides blurry  approximations as shown in Figs.~\ref{Ex6sig.sub.2} and \ref{Ex6sig.sub.6}.
  In the second stage of reconstruction, Figs.~\ref{Ex6sig.sub.4} and \ref{Ex6sig.sub.8} present results almost the same as Figs.~\ref{Ex5sig.sub.4} and \ref{Ex5sig.sub.8}, which once again prove the robust performance of the least-squares method. 
\begin{figure}[ht!]
\centering
\subfigure[ true $\sigma$]{
\label{Ex5sig.sub.1}
\includegraphics[width=0.23\textwidth]{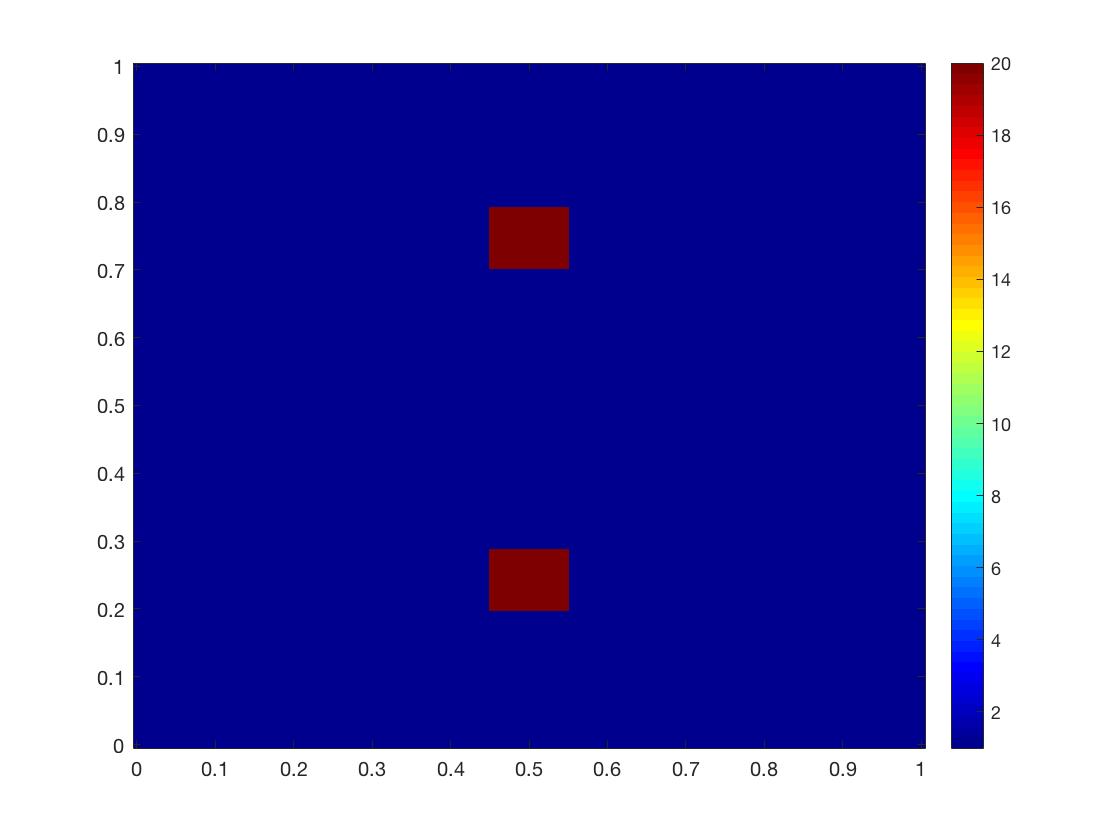}}
\subfigure[ index function $\Phi$]{
\label{Ex5sig.sub.2}
\includegraphics[width=0.23\textwidth]{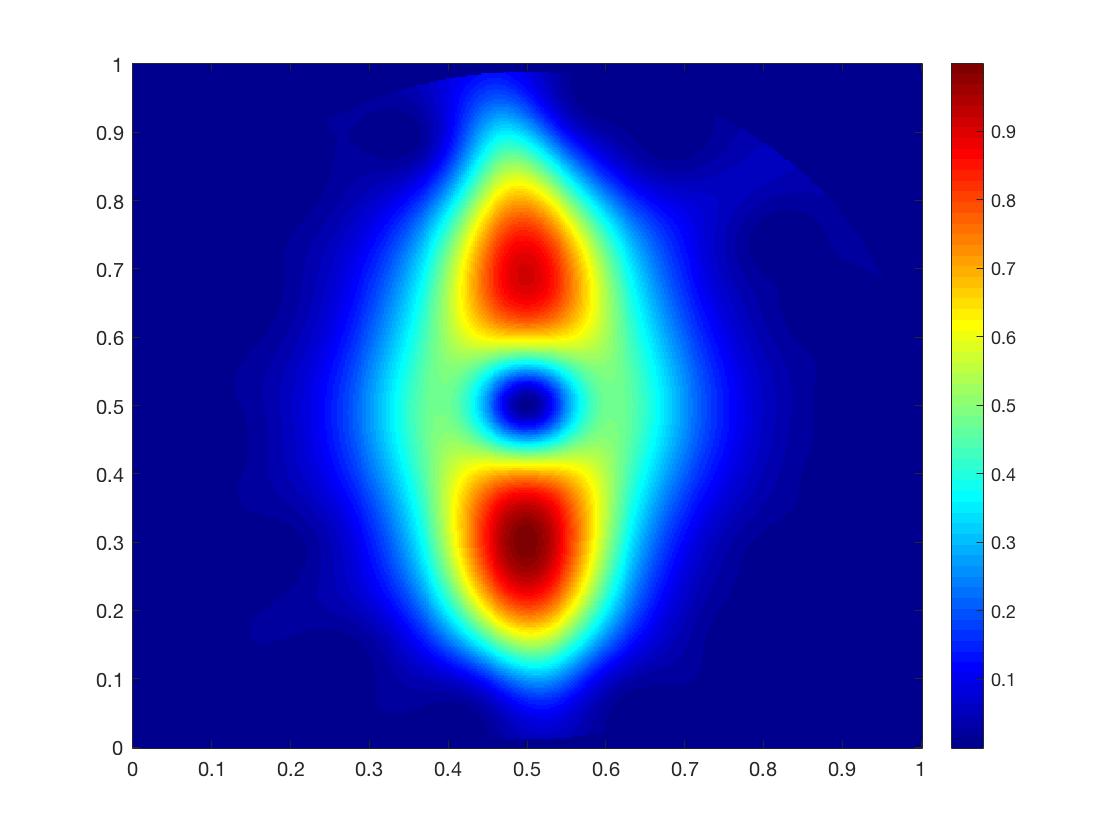}}
\subfigure[ index function $\Phi|_D$]{
\label{Ex5sig.sub.3}
\includegraphics[width=0.23\textwidth]{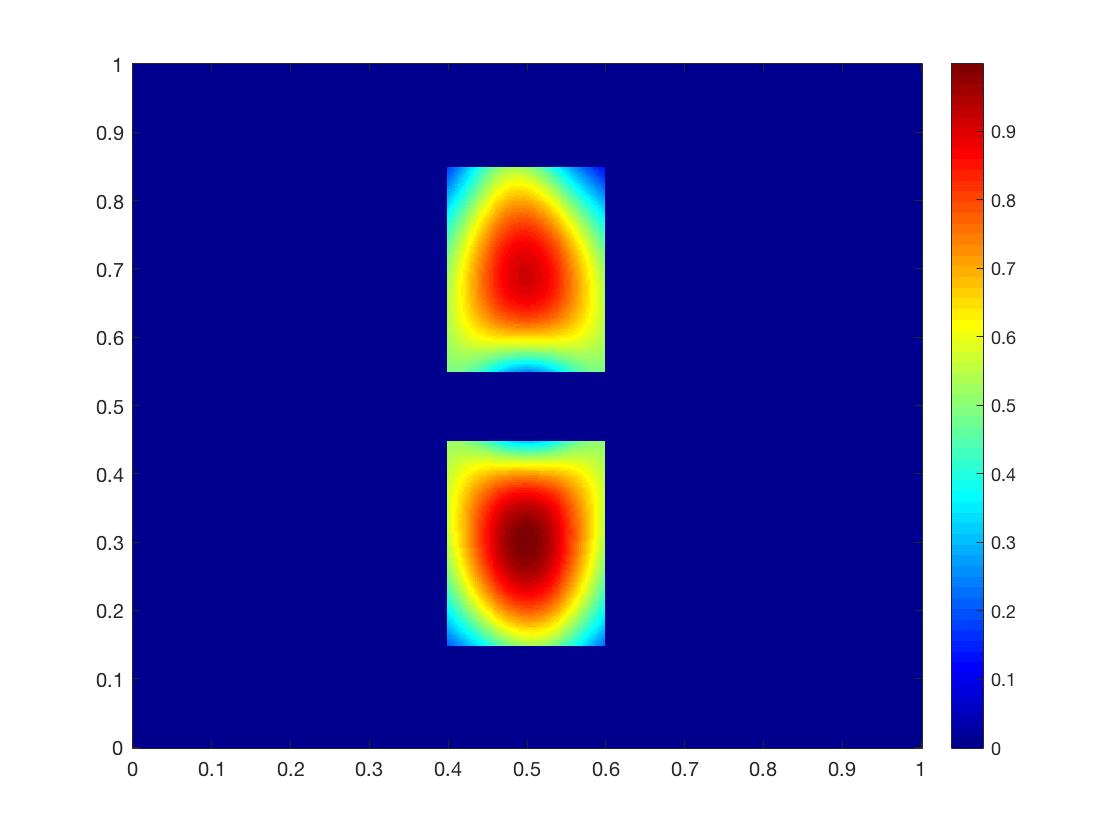}}
\subfigure[ least-squares]{
\label{Ex5sig.sub.4}
\includegraphics[width=0.23\textwidth]{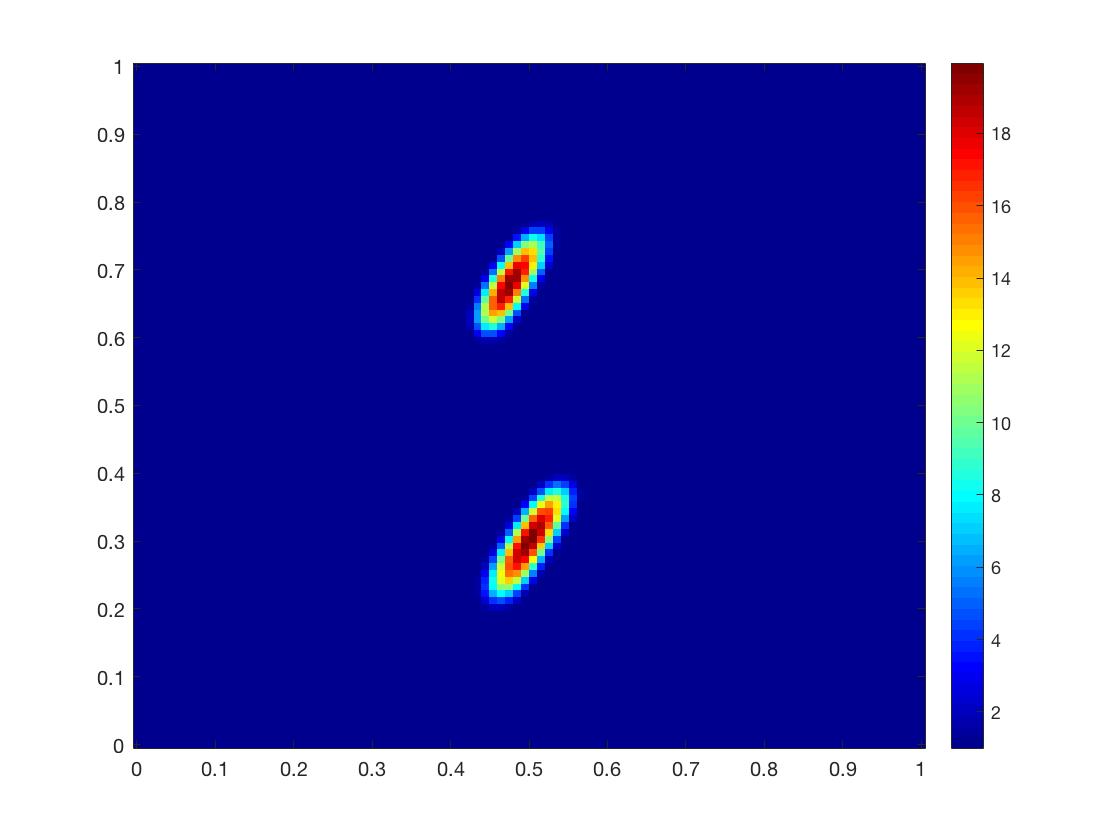}}
\subfigure[ true $\mu$]{
\label{Ex5sig.sub.5}
\includegraphics[width=0.23\textwidth]{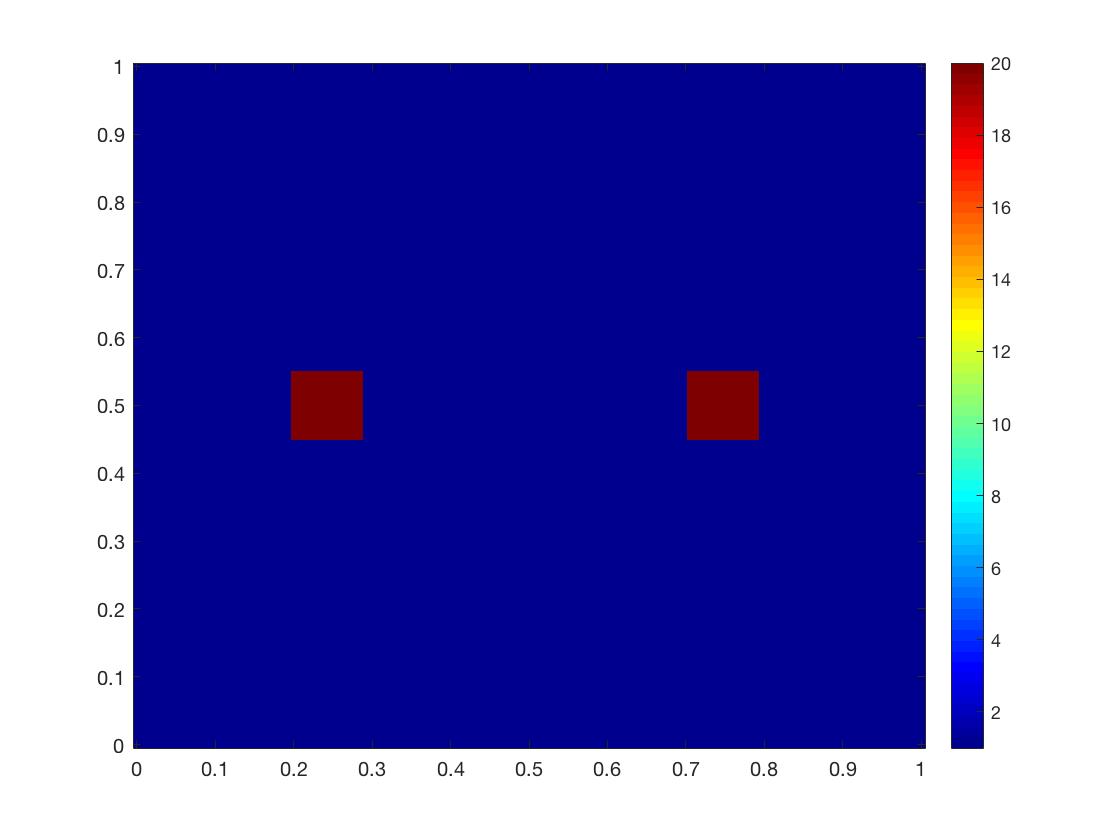}}
\subfigure[ index function $\Phi$ ]{
\label{Ex5sig.sub.6}
\includegraphics[width=0.23\textwidth]{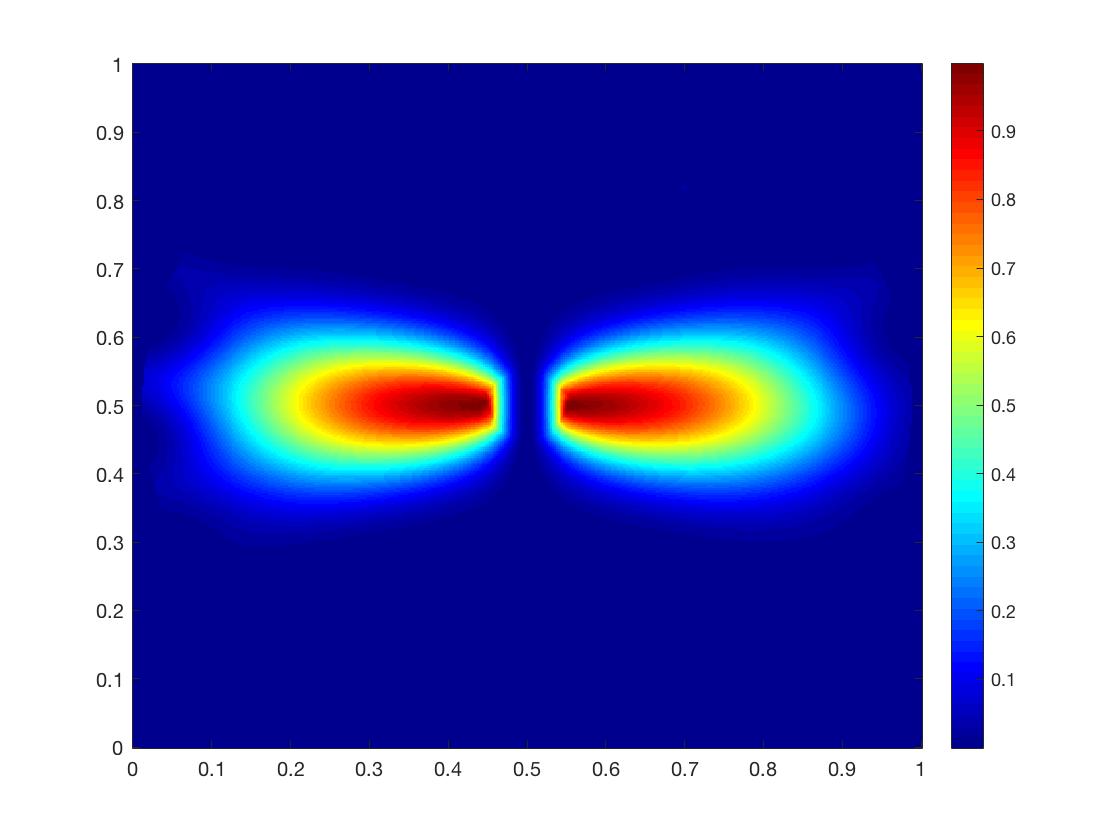}}
\subfigure[ index function $\Phi|_D$]{
\label{Ex5sig.sub.7}
\includegraphics[width=0.23\textwidth]{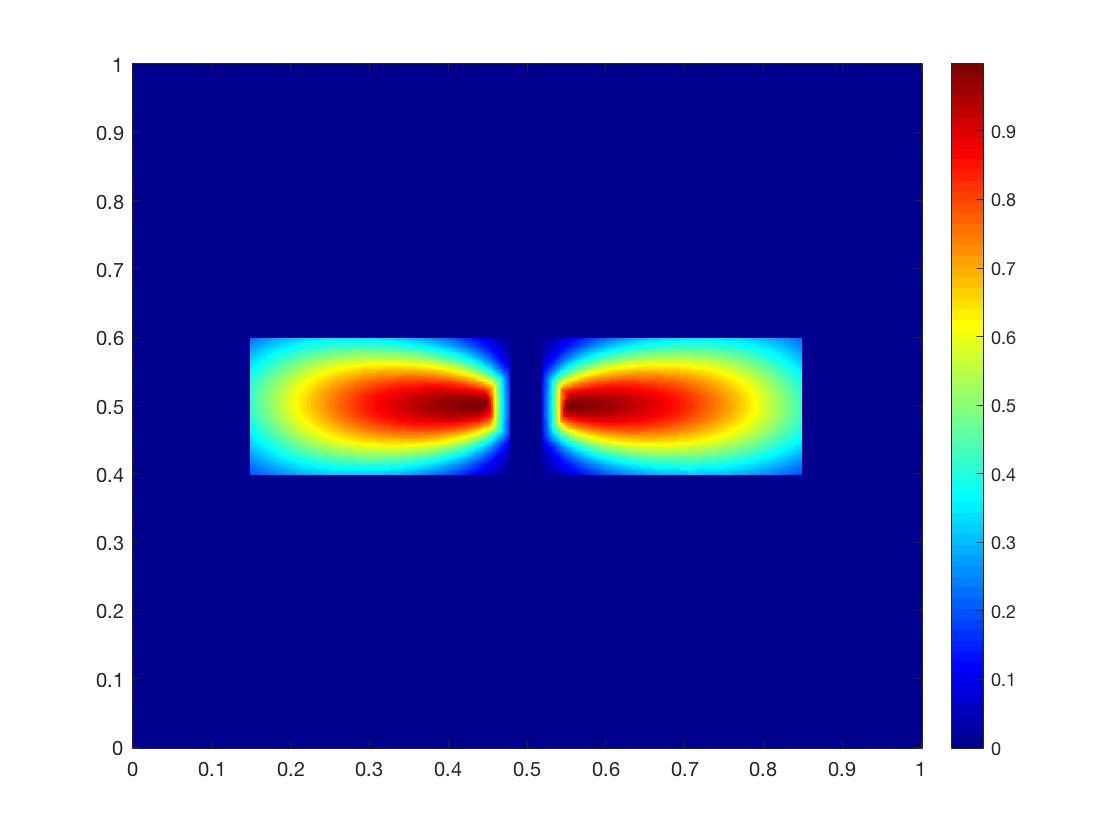}}
\subfigure[ least-squares]{
\label{Ex5sig.sub.8}
\includegraphics[width=0.23\textwidth]{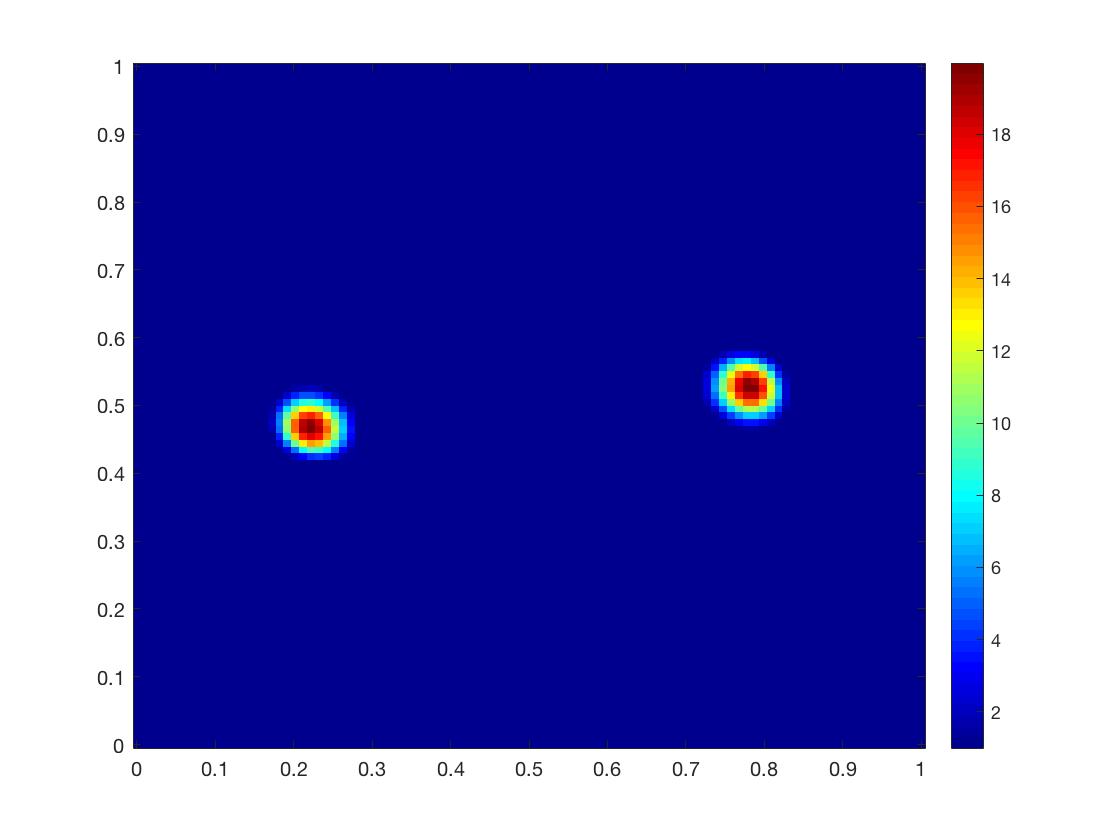}}
\caption{Numerical results for Example \ref{example3}: (a) true $\sigma$, (b) index $\Phi$ for $\sigma$ from DSM, (c) index $\Phi|_D$ for $\sigma$ (index function constrained to the chosen subdomain $D$), (d) least-squares reconstruction. (e), (f), (g), (h) are corresponding graphs for $\mu$. These two rows are derived using exact data.}
\label{Ex126.main}
\end{figure}

\begin{figure}[ht!]
\centering
\subfigure[ true $\sigma$]{
\label{Ex6sig.sub.1}
\includegraphics[width=0.23\textwidth]{figure/eachtruesigma.jpg}}
\subfigure[ index function $\Phi$]{
\label{Ex6sig.sub.2}
\includegraphics[width=0.23\textwidth]{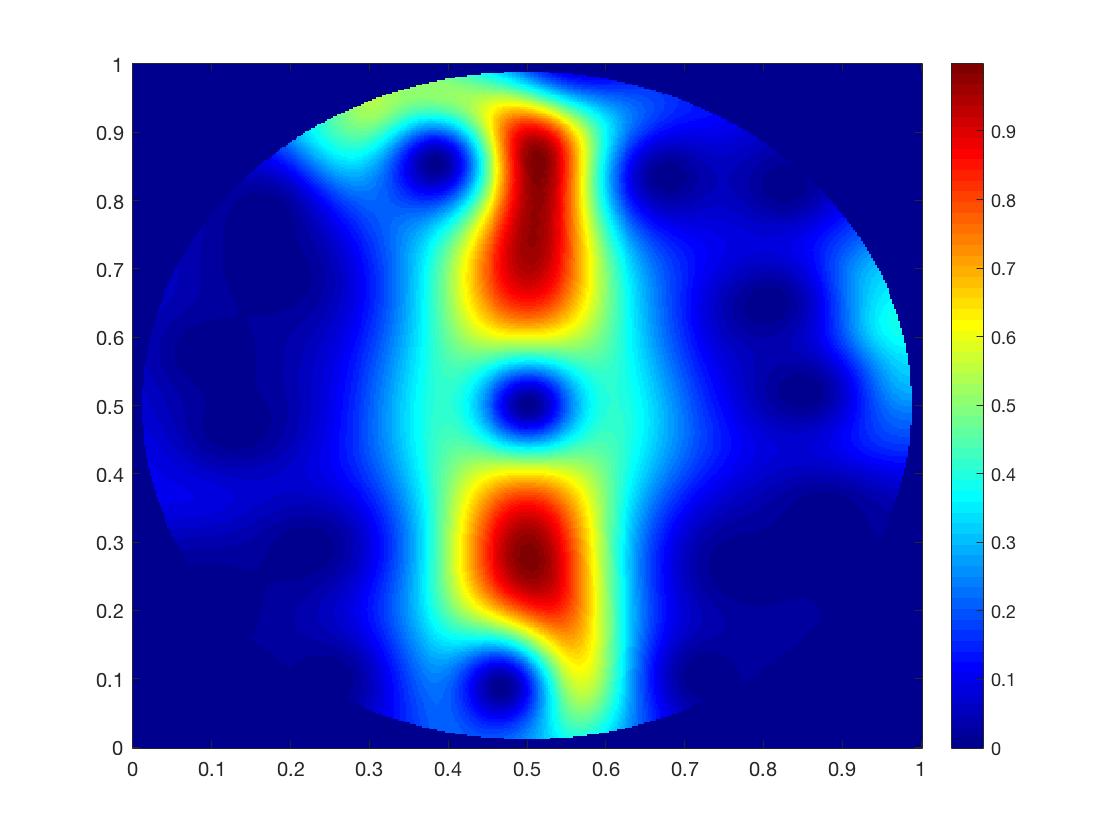}}
\subfigure[ index function $\Phi|_D$]{
\label{Ex6sig.sub.3}
\includegraphics[width=0.23\textwidth]{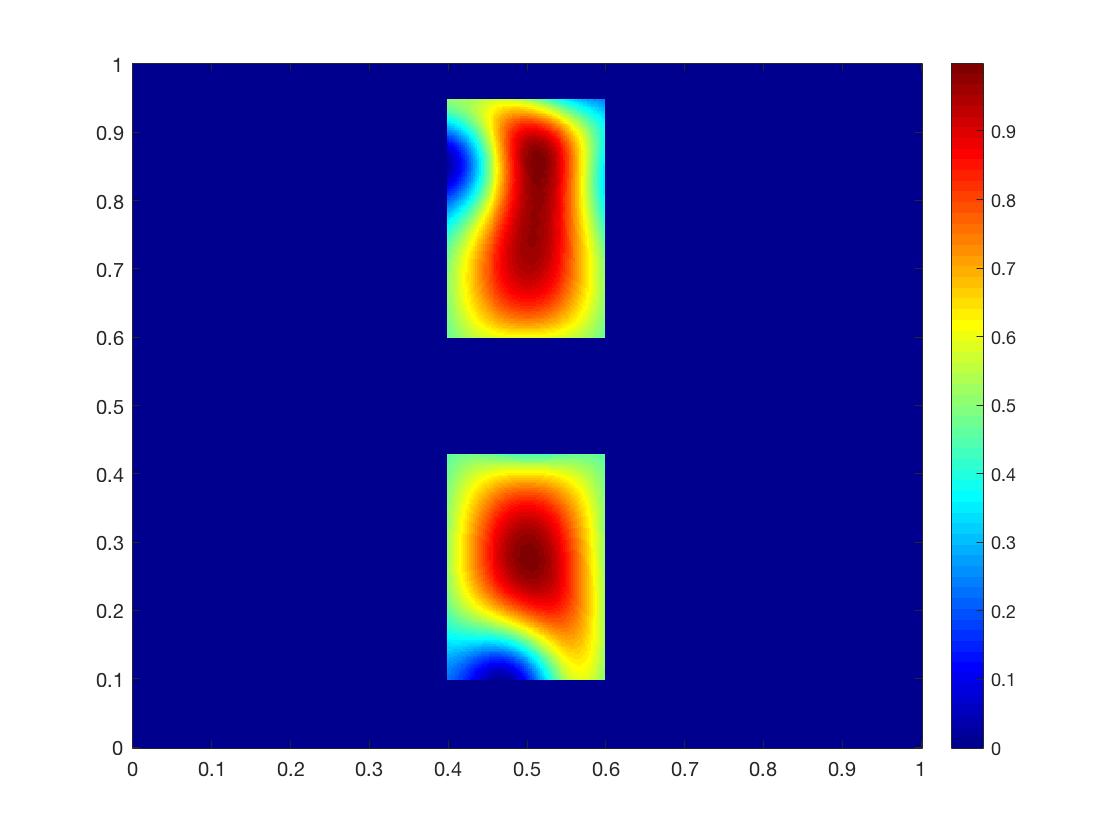}}
\subfigure[ least-squares]{
\label{Ex6sig.sub.4}
\includegraphics[width=0.23\textwidth]{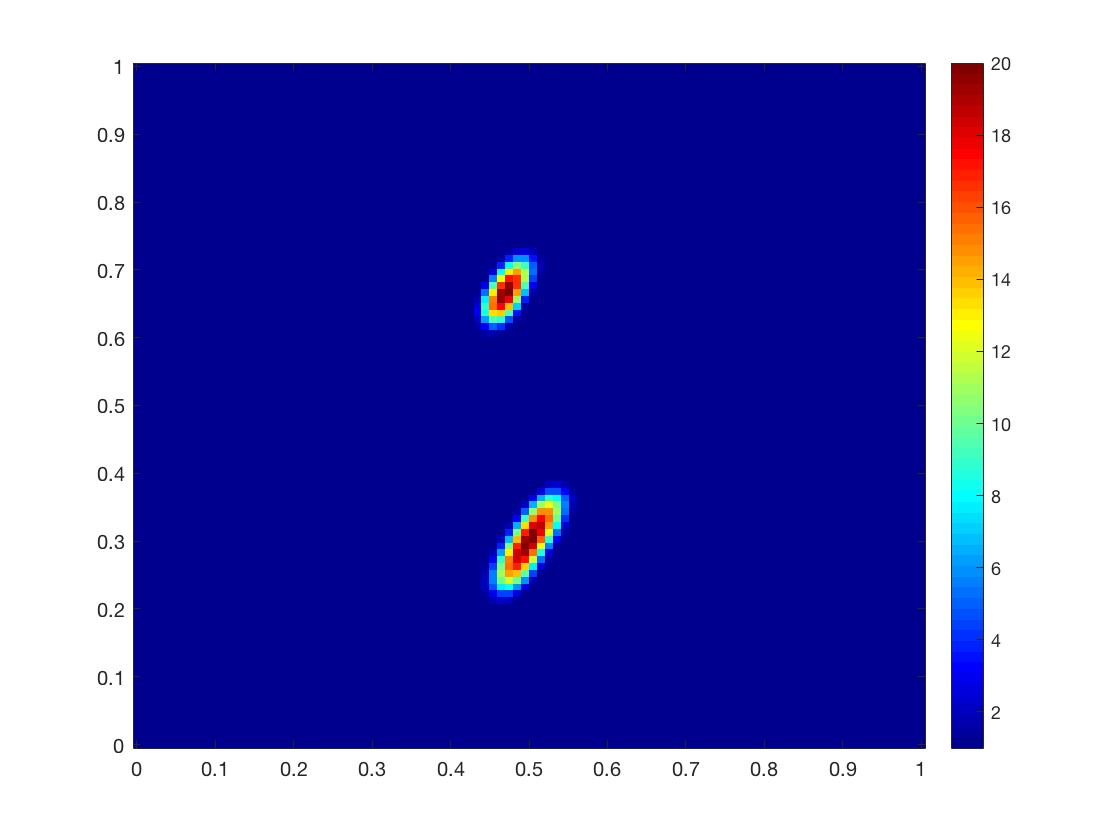}}
\subfigure[ true  $\mu$]{
\label{Ex6sig.sub.5}
\includegraphics[width=0.23\textwidth]{figure/eachtruemu.jpg}}
\subfigure[ index function $\Phi$ ]{
\label{Ex6sig.sub.6}
\includegraphics[width=0.23\textwidth]{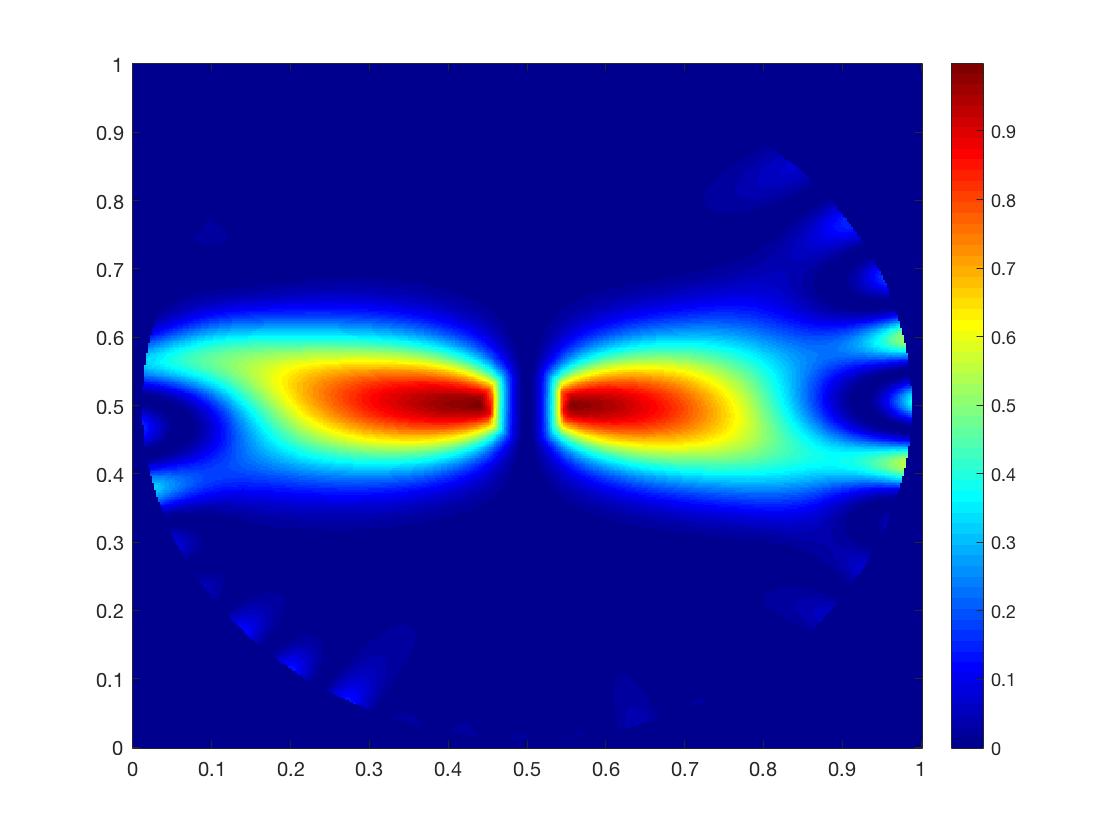}}
\subfigure[ index function $\Phi|_D$]{
\label{Ex6sig.sub.7}
\includegraphics[width=0.23\textwidth]{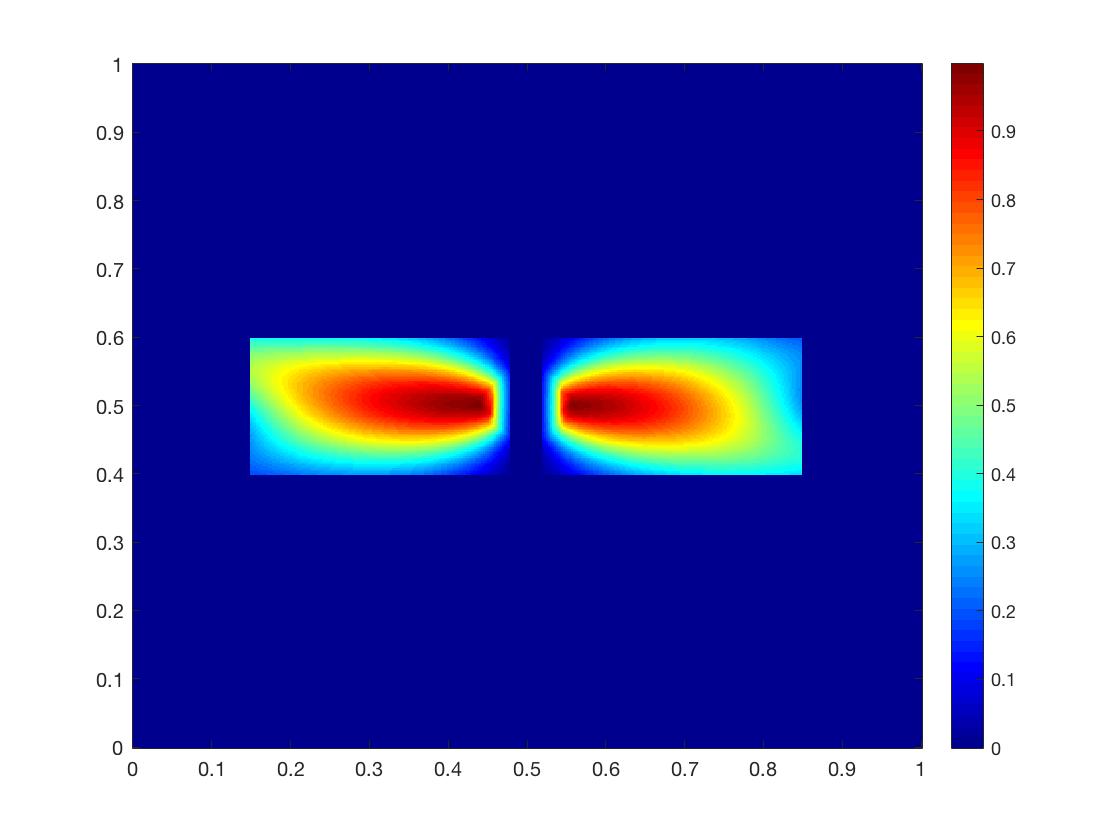}}
\subfigure[ least-squares]{
\label{Ex6sig.sub.8}
\includegraphics[width=0.23\textwidth]{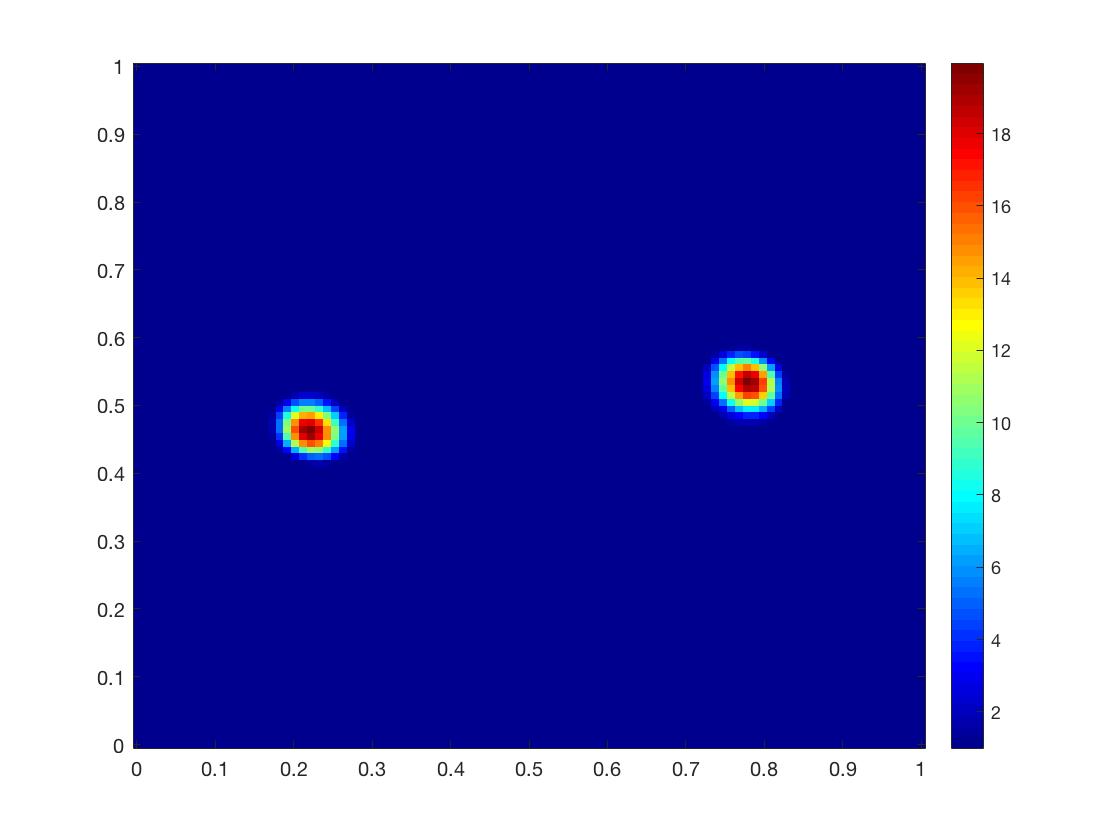}}
\caption{Numerical results for Example \ref{example3}: (a) true $\sigma$, (b) index $\Phi$ for $\sigma$ from DSM, (c) index $\Phi|_D$ for $\sigma$ (index function constrained to the chosen subdomain $D$), (d) least-squares reconstruction. (e), (f), (g), (h) are corresponding graphs for $\mu$.  These two rows are derived using data  with $20\%$ noise.}
\label{Ex127.main}
\end{figure}

\begin{exam}\label{example4}
 In this example we reconstruct diffusion coefficient $\sigma$ with ring-shaped square inclusion as shown in Fig.~\ref{Ex7sig.sub.1} with two sets of measurement. The outer and inner side length of the ring are $0.2$ and $0.15$, 
 and the rectangle ring is centered at $(0.5,0.6)$. The coefficient $\sigma$ is taken to be $20$ inside the region.
\end{exam}
We use two sets of measurement from different directions for the two-stage least-squares method. For this challenging case, the index function from the DSM  only reflects an approximated location of inclusion as in Fig.~\ref{Ex7sig.sub.2}, and we have no clue about the shape of inclusion from only DSM reconstruction.  With two sets of measurement, the
least-squares formulation can reconstruct the edges of the ring-shape inclusion as shown in Fig.~\ref{Ex7sig.sub.4}. 
After adding $20\%$ noise, one has the reconstruction (see Fig.~\ref{Ex7sig.sub.8}) that is very similar to the result without noise, which shows the approximation is quite stable with respect to the noise.
\begin{figure}[ht!]
\centering
\subfigure[ true $\sigma$]{
\label{Ex7sig.sub.1}
\includegraphics[width=0.23\textwidth]{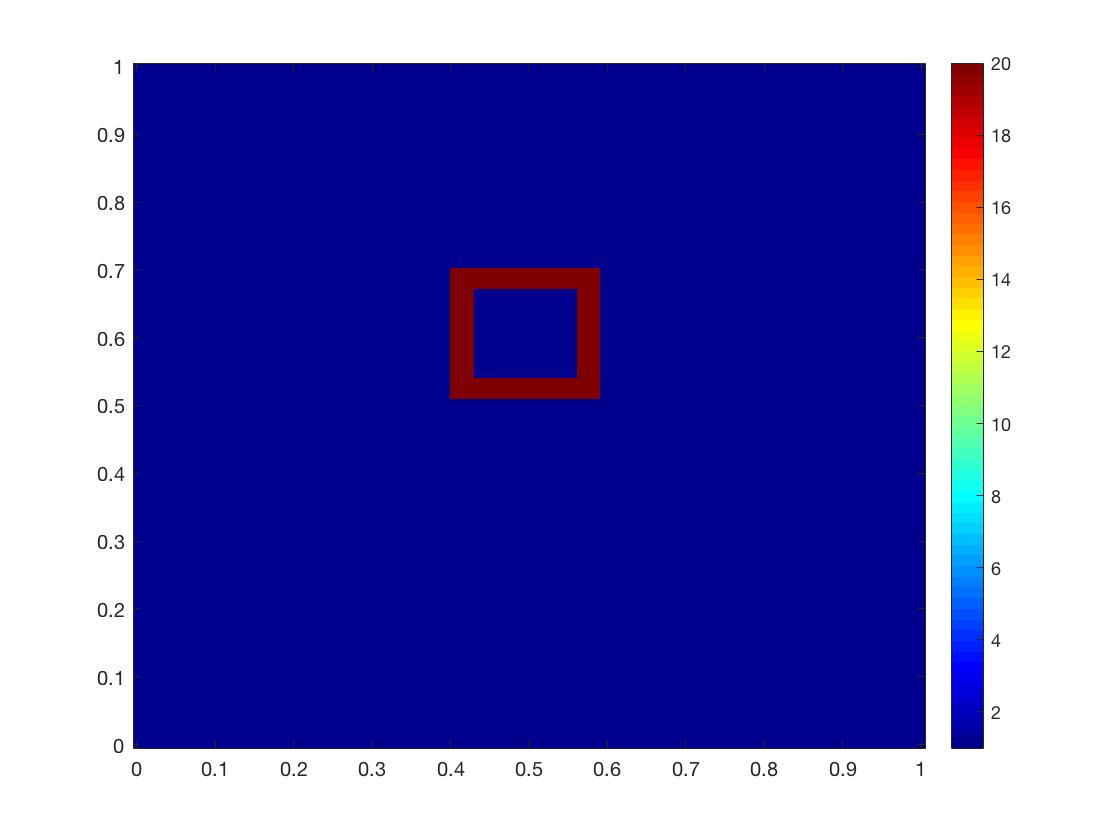}}
\subfigure[index $\Phi$]{
\label{Ex7sig.sub.2}
\includegraphics[width=0.23\textwidth]{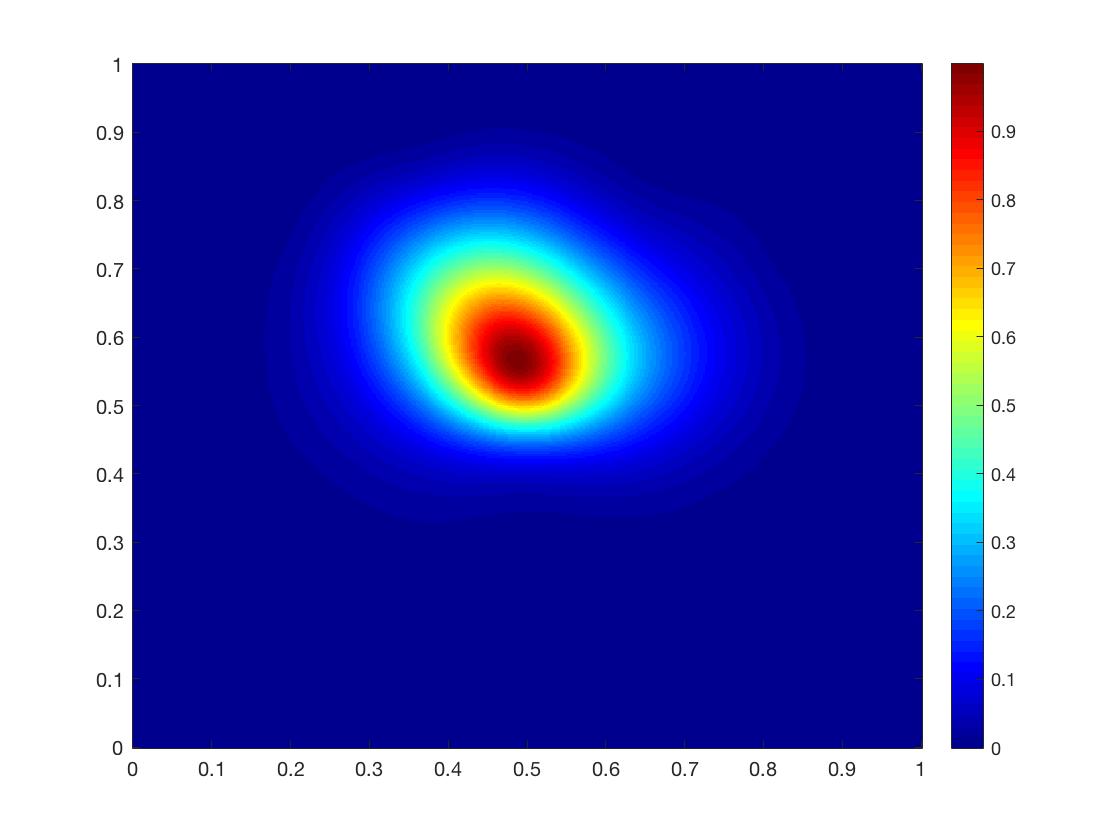}}
\subfigure[  index $\Phi|_D$]{
\label{Ex7sig.sub.3}
\includegraphics[width=0.23\textwidth]{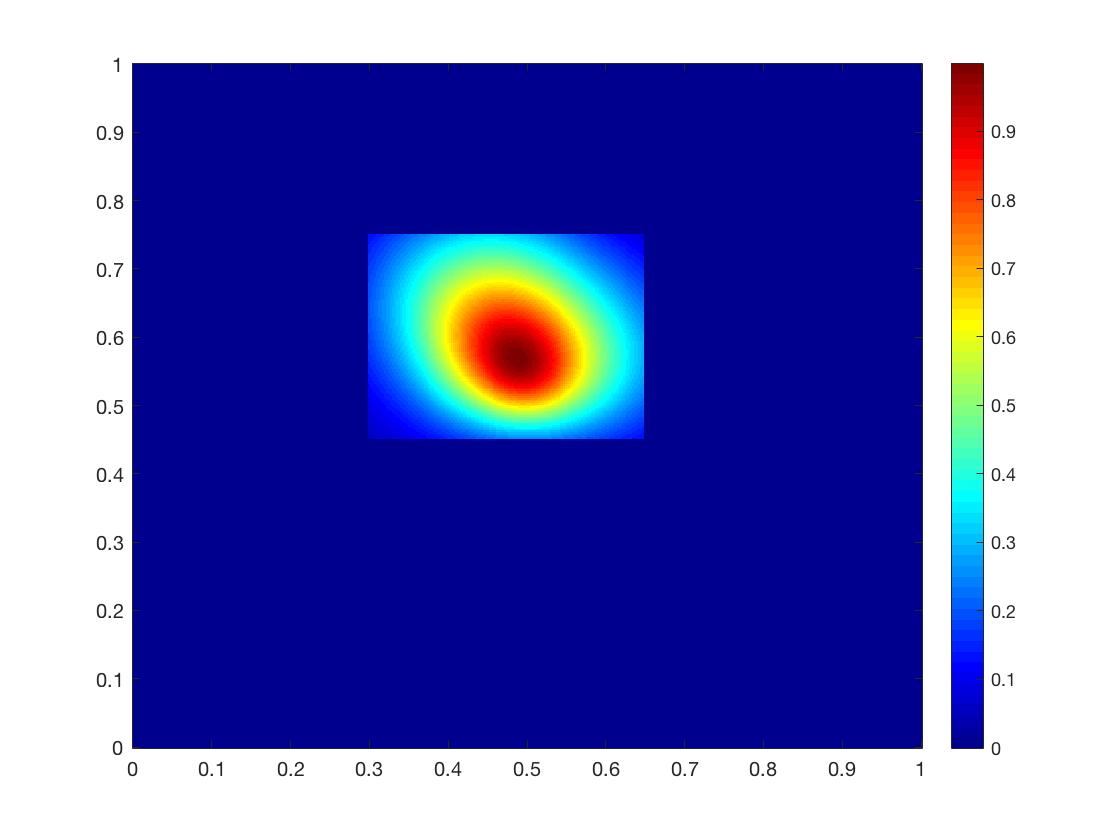}}
\subfigure[ least-squares]{
\label{Ex7sig.sub.4}
\includegraphics[width=0.23\textwidth]{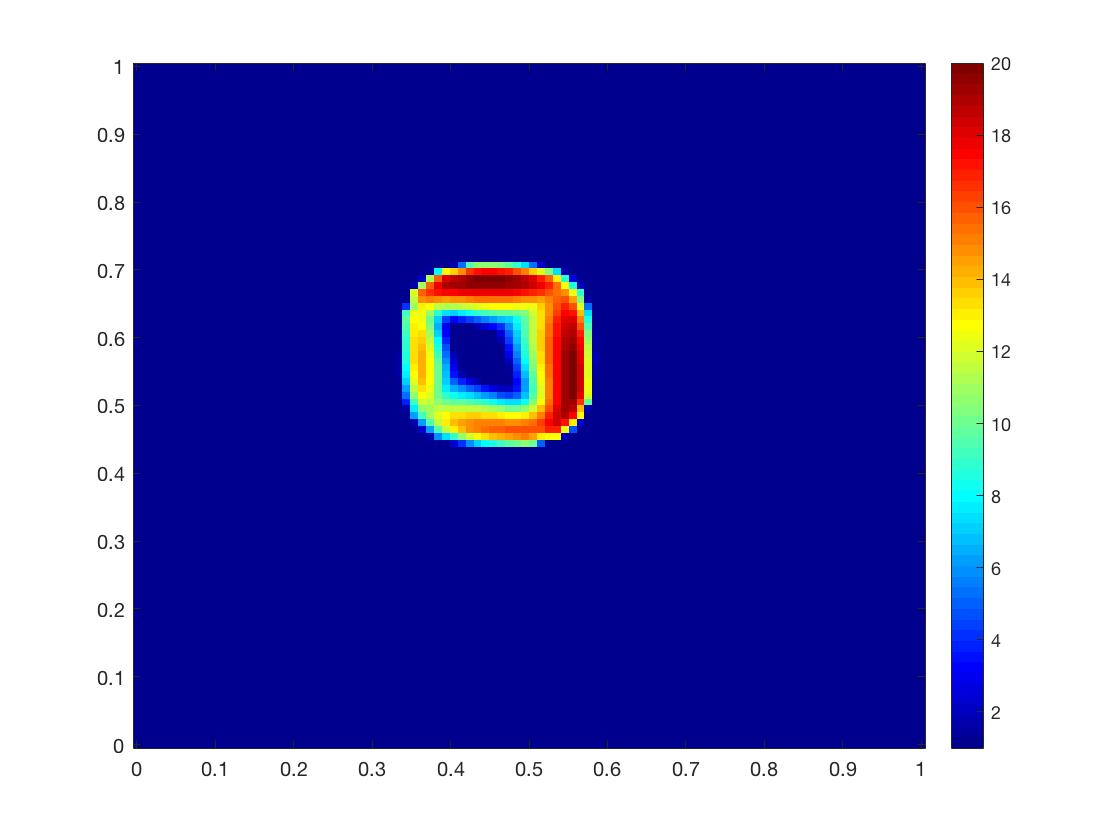}}
\subfigure[ true $\sigma$]{
\label{Ex7sig.sub.5}
\includegraphics[width=0.23\textwidth]{figure/ringtrue.jpg}}
\subfigure[ index $\Phi$]{
\label{Ex7sig.sub.6}
\includegraphics[width=0.23\textwidth]{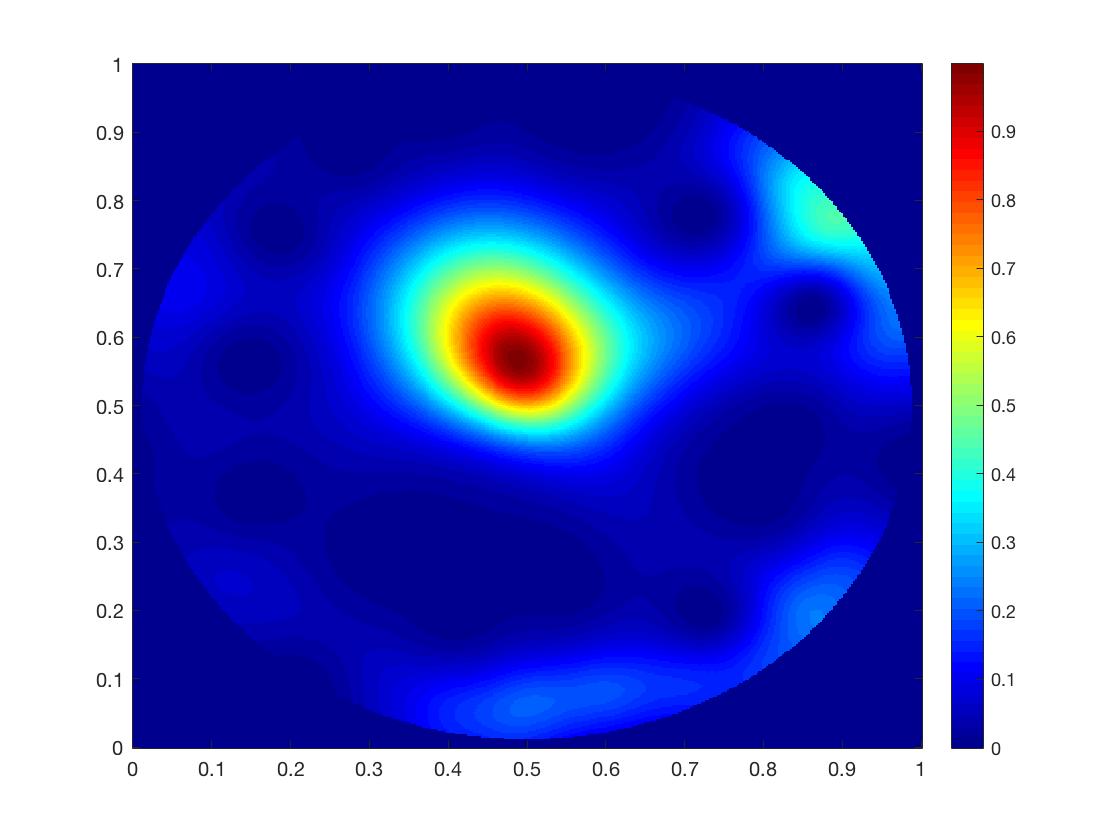}}
\subfigure[ index $\Phi|_D$]{
\label{Ex7sig.sub.7}
\includegraphics[width=0.23\textwidth]{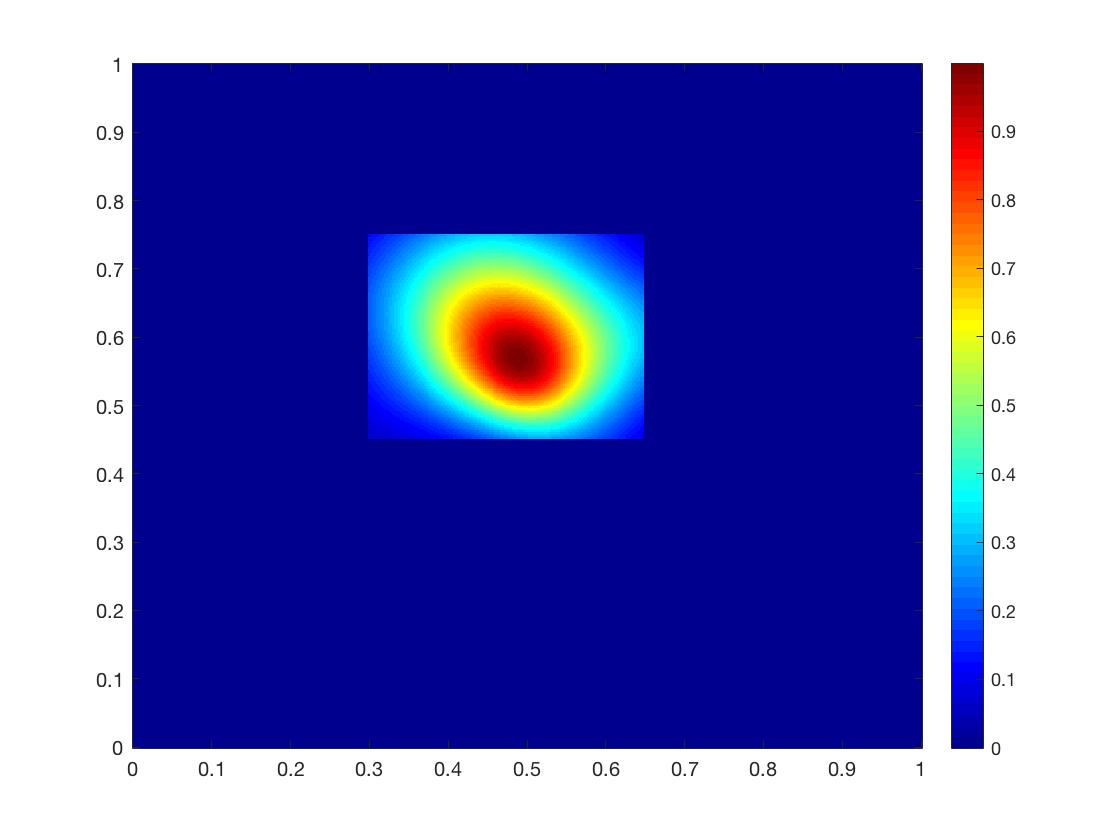}}
\subfigure[ least-squares]{
\label{Ex7sig.sub.8}
\includegraphics[width=0.23\textwidth]{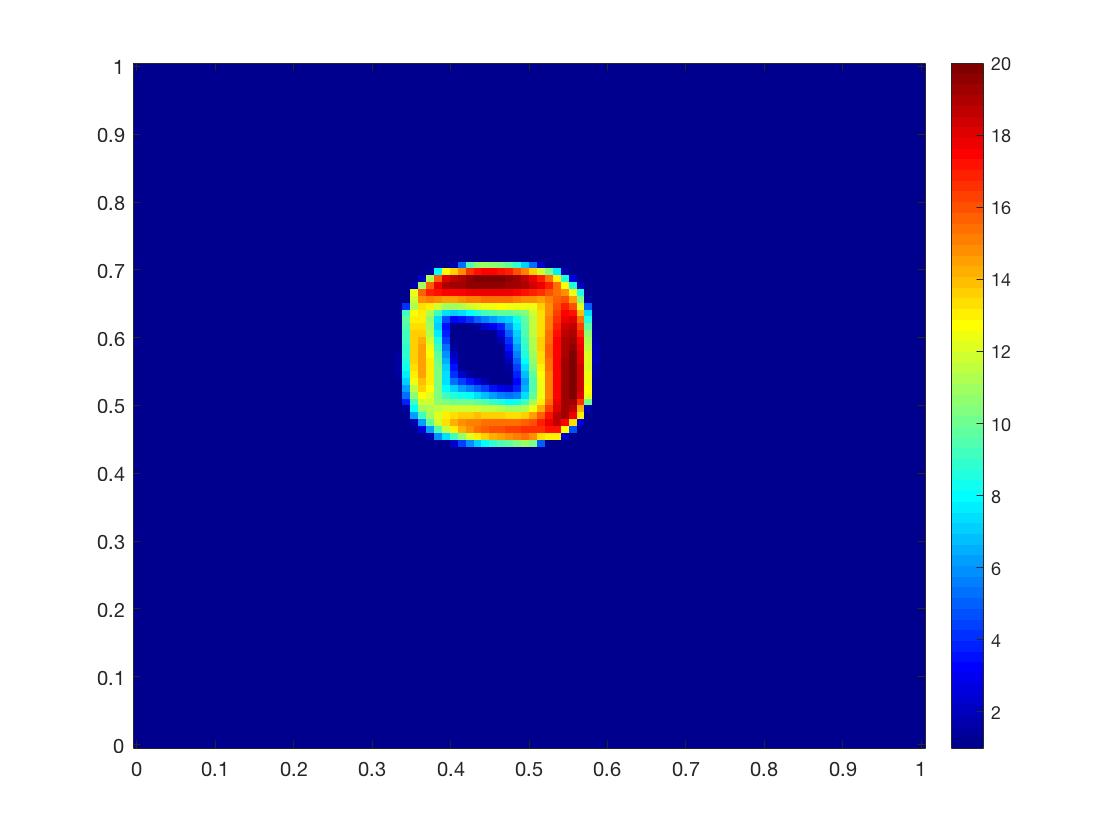}}
\caption{Numerical results for Example \ref{example4}: (a) true $\sigma$,  (b) index $\Phi$ from DSM, (c) index $\Phi|_D$ (index function constrained to the chosen subdomain $D$), (d) least-squares reconstruction.  (e), (f), (g), (h) are corresponding graphs using data with $20\%$ noise. }
\label{Ex126.main}
\end{figure}

\end{subsection}
\end{section}

 \bibliographystyle{unsrt}
 \end{document}